\newtheorem{same}{This should never appear}[section]
\newtheorem{defin}[same]{Definition}
\newtheorem{remark}[same]{Remark}
\newtheorem{theorem}[same]{Theorem}
\newtheorem{example}[same]{Example}
\newtheorem{lemma}[same]{Lemma}
\newtheorem{fact}[same]{Fact}
\newtheorem{question}[same]{Question}
\newtheorem{cor}[same]{Corollary}
\newtheorem{prop}[same]{Proposition}
\newtheorem{hypothesis}[same]{Hypothesis}
\newtheorem{nota}[same]{Notation}
\newtheorem{defin*}{Definition}
\newtheorem*{theorem*}{Theorem}
\newcommand{\bb}{\mathbf{b}}
\newcommand{\rest}{\mathord{\upharpoonright}}
\newcommand{\id}{\textrm{id}}
\newcommand{\eff}{\mathcal{F}}
\newcommand{\tte}{\mathcal{T}}
\newcommand{\K}{\mathbf{K}}
\newcommand{\Ka}{\K^{T}}
\newcommand{\LS}{\operatorname{LS}}
\newcommand{\leap}[1]{\le_{#1}}
\newcommand{\lea}{\leap{\K}}
\newcommand{\gtp}{\mathbf{gtp}}
\newcommand{\gS}{\mathbf{gS}}
\newcommand{\tp}[3]{\mathrm{tp}(#1/#2, #3)}
\newcommand{\tupop}{\textup{(}}
\newcommand{\tupcp}{\textup{)}}
\DeclareMathOperator{\Inv}{Inv}            
\DeclareMathOperator{\pp}{pp}    
\DeclareMathOperator{\cof}{cf}    
\title{On universal modules with pure embeddings}
\date{\today.} 
\author{Thomas G.\ Kucera}
\email{Thomas.Kucera@umanitoba.ca}
\urladdr{https://server.math.umanitoba.ca/~tkucera/}
\address{Department of Mathematics \\ University of Manitoba \\
Winnipeg, Manitoba, CA}
\author{Marcos Mazari-Armida}
\email{mmazaria@andrew.cmu.edu}
\urladdr{http://www.math.cmu.edu/~mmazaria/ }
\address{Department of Mathematical Sciences \\ Carnegie Mellon
University \\ Pittsburgh, Pennsylvania, USA}
\begin{document}

\begin{abstract}
We show that certain classes of modules have universal models with
respect to pure embeddings.

\begin{theorem} Let $R$ be a ring, $T$ a first-order theory with an
infinite model extending the theory of $R$-modules and $\K^T=(Mod(T),
\leq_{pp})$ \tupop where $\leq_{pp}$ stands for pure submodule\tupcp.
Assume
$\K^T$ has joint embedding and amalgamation.

 If $\lambda^{|T|}=\lambda$ or $\forall \mu < \lambda( \mu^{|T|} <
\lambda)$, then $\K^T$ has a universal model of cardinality $\lambda$.
\end{theorem}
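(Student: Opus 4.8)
The plan is to view $\K^T=(\mathrm{Mod}(T),\leq_{pp})$ as an abstract elementary class with $\LS(\K^T)=|T|$ and to realize the universal model as the union of a carefully chosen continuous $\leq_{pp}$-chain, with amalgamation as the engine that absorbs extensions. Write $\kappa=|T|$. First I would extract the arithmetic consequences of the hypotheses: in either case $\kappa<\lambda$ and $2^{\kappa}\le\lambda$ (for the second case one checks this by instantiating $\mu^{\kappa}<\lambda$ at $\mu=\kappa$), so the number of isomorphism types of models in $\K^T$ of size $\le\kappa$ is at most $2^{\kappa}\le\lambda$. I would also record that $\K^T$ has no maximal models: an elementary extension of a model of $T$ is in particular a pure extension, so arbitrarily large models exist and embed the smaller ones.

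The core is the existence of \emph{universal extensions}. Using amalgamation and joint embedding I would prove: for every $M\in\K^T$ with $\kappa\le\|M\|=\mu$ there is $M\leq_{pp}N$ with $\|N\|\le\mu^{\kappa}$ such that every $\leq_{pp}$-extension of $M$ adding at most $\kappa$ new elements $\leq_{pp}$-embeds into $N$ over $M$. Concretely, one enumerates representatives of the at most $\mu^{\kappa}$ (that is, $\mu^{\kappa}\cdot 2^{\kappa}=\mu^{\kappa}$) relevant one-step extensions and amalgamates them one at a time over $M$ along a continuous chain, letting $N$ be the union. Iterating this over all $\le\kappa$-sized pure submodels yields a model that is \emph{universal over its $\le\kappa$-submodels}: every $\leq_{pp}$-extension by $\le\kappa$ elements of any of its $\le\kappa$-sized pure submodels embeds back into it over that submodel. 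The bound $\mu^{\kappa}$ here, rather than $2^{\mu}$, is exactly what makes the cardinal hypotheses usable.

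Next I would run the construction in the two arithmetic cases. If $\lambda^{\kappa}=\lambda$, then $\cof(\lambda)>\kappa$ (by K\"onig), so I would build an increasing continuous chain $\langle U_i:i<\lambda\rangle$ with $U_0$ any model of size $\lambda$ and each $U_{i+1}$ universal over all $\le\kappa$-submodels of $U_i$; then $\|U_{i+1}\|\le\lambda^{\kappa}=\lambda$, the union $U=\bigcup_{i<\lambda}U_i$ has size $\lambda$, and since $\cof(\lambda)>\kappa$ every $\le\kappa$-sized pure submodel of $U$ is already pure in some $U_i$ and hence handled at stage $i+1$. If instead $\mu^{\kappa}<\lambda$ for all $\mu<\lambda$, I would fix cardinals $\kappa\le\mu_i<\lambda$ increasing and cofinal in $\lambda$ and build $\langle U_i:i<\cof(\lambda)\rangle$ with $\|U_i\|=\mu_i$ and $U_{i+1}$ universal over the $\le\kappa$-submodels of $U_i$; each step stays below $\lambda$ because $\mu_i^{\kappa}<\lambda$, and $\|U\|=\sup_i\mu_i=\lambda$. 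In both cases the resulting $U$ has size $\lambda$ and is universal over its $\le\kappa$-submodels.

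Finally I would show $U$ is universal: given $N\in\K^T$ with $\|N\|\le\lambda$, resolve it as a directed colimit of its $\le\kappa$-sized pure submodels and construct a coherent family of $\leq_{pp}$-embeddings into $U$ by transfinite recursion, using amalgamation to realize each one-$\kappa$-step extension inside $U$ and the smoothness axiom at limits to keep images pure in $U$; the colimit is the desired pure embedding $N\hookrightarrow U$. I expect the main obstacle to be precisely this last step when $\|N\|=\lambda$: the base over which the next $\le\kappa$-chunk must be embedded grows beyond size $\kappa$, whereas the construction only guarantees universality over $\le\kappa$-submodels. Handling this — aligning the resolution of $N$ with the chain $\langle U_i\rangle$ and exploiting the finite-parameter (hence $\kappa$-local) character of pure embeddings to reduce each extension step to one over a $\le\kappa$-submodel, all while the two cardinal hypotheses keep $\|U\|=\lambda$ — is where the real work lies.
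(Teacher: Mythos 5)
Your counting bound is correct, and it is worth noting that it is the module-theoretic heart of the matter in disguise: a pure extension of $M$ with $\leq\kappa$ new elements is determined up to isomorphism over $M$ by which $R$-linear combinations of the new elements fall into $M$ ($2^{\kappa}$ choices) and by the values of those that do ($\mu^{\kappa}$ choices), which is essentially the Fisher--Baur stability phenomenon (Fact \ref{st-c}). The fatal problem is the step you explicitly defer to the end. Universality of a size-$\lambda$ model over its $\leq\kappa$-sized pure submodels does \emph{not} imply universality in $\K^T_{\lambda}$, and no appeal to the finitary character of purity or to ``aligning resolutions'' can make it do so, because the obstruction is one of multiplicity, not of locality. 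Consider the complete first-order theory of infinitely many independent unary predicates, viewed as an AEC with elementary embeddings (it has amalgamation, joint embedding, no maximal models, and is stable): for $\lambda=(2^{\aleph_0})^+$ a model of size $\lambda$ realizing one fixed ``colour'' exactly $\aleph_1$ times and all other colours $\lambda$ times embeds every countable extension of each of its countable submodels over that submodel, yet it is not universal, since a model realizing that colour $\lambda$ times cannot embed into it. Your construction cannot exclude an analogous outcome: the requirement on $U_{i+1}$ is satisfied by taking $U_{i+1}=U_i$ whenever $U_i$ already has the homogeneity property, so the construction may stall at any model with that property, universal or not. Closing the gap requires controlling types over submodels of \emph{full} size $\lambda$, i.e., genuine $\lambda$-Galois-stability; that is exactly the paper's route: Galois types coincide with pp-types over models (Lemma \ref{pp=gtp}), Fisher--Baur then gives $\lambda$-Galois-stability when $\lambda^{|T|}=\lambda$ (Theorem \ref{g-st}), and Fact \ref{uni} converts this into extensions $M_{i+1}$ universal over the \emph{whole} previous model $M_i$ --- over bases of size $\lambda_i$, not $\kappa$ --- which is what makes the final resolution-plus-amalgamation argument of Lemma \ref{main3} go through.

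There is a second, independent defect in your singular case. If $\forall \mu<\lambda\,(\mu^{\kappa}<\lambda)$, then $\lambda$ may well have $\cof(\lambda)\leq\kappa$ --- indeed $\lambda=\beth_\omega$ with $\kappa=\aleph_0$ is the motivating example --- and then a $\leq\kappa$-sized pure submodel of $U=\bigcup_{i<\cof(\lambda)}U_i$ need not be contained in any $U_i$, so the stagewise requirement does not yield the property you claim for $U$ (``universal over its $\leq\kappa$-submodels''). The paper's Lemma \ref{main3} is structured precisely so that no such property of the union is ever needed: universality is invoked only over the full chain members, and the target $N\in\K^T_\lambda$ is resolved by a chain $\langle N_i : i<\cof(\lambda)\rangle$ of matching sizes $\lambda_i$, the embeddings $f_i\colon N_i\to M_{i+1}$ being extended one step at a time by amalgamating over the current $M_{j+1}$ and then using universality of $M_{j+2}$ over $M_{j+1}$.
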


As a special case we get a recent result of Shelah \cite[1.2]{sh820}
concerning the existence of universal reduced torsion-free abelian
groups with respect to pure embeddings.

 We begin the study of limit models for classes of $R$-modules with
joint embedding and amalgamation. We show that limit models with chains of long cofinality are pure-injective and we characterize limit models with chains of countable cofinality. This can be used to answer Question
4.25 of \cite{maz}.

As this paper is aimed at model theorists and algebraists an effort
was made to provide the background for both.
\end{abstract}


\maketitle

{\let\thefootnote\relax\footnote{{AMS 2010 Subject Classification:
Primary: 03C48. Secondary: 03C45, 03C60, 13L05, 16D10.
Key words and phrases.  Modules; Torsion-free groups; Abstract Elementary
Classes; Universal models; Limit models.}}}

\tableofcontents

\section{Introduction}

The first result concerning the existence of universal uncountable
objects in classes of modules was \cite{eklof}. In it, Eklof showed
that there exists a homogeneous universal $R$-module of cardinality
$\lambda$ in the class of $R$-modules if and only if $\lambda^{<
\gamma} =\lambda$ (where $\gamma$ is the least cardinal such that
every ideal of $R$ is generated by less than $\gamma$ elements).

Grossberg and Shelah \cite{grsh} used the
weak continuum hypothesis to answer a question of Macintyre and
Shelah \cite{mash} regarding the existence of universal locally
finite groups in uncountable cardinalities. Kojman and Shelah
\cite{kojsh} and Shelah \cite{sh1}, \cite{sh2}, \cite{sh3} and
\cite{sh820} continued the study of universal groups for certain
classes of abelian groups with respect to embeddings and pure
embeddings. For further historical comments the reader can consult
\cite[\S 6]{dz}.  

In this paper, we will give a positive answer to the question of
whether certain classes of modules with pure embeddings have
universal models in specific cardinals. More precisely, we obtain:

\textbf{Theorem \ref{mainc}.}
\textit{Let $R$ be a ring, $T$ a first-order theory with an infinite
model extending the theory of $R$-modules and $\K^T=(Mod(T),
\leq_{pp})$ \tupop where $\leq_{pp}$ stands for 
pure submodule\tupcp. Assume
$\K^T$ has joint embedding and 
amalgamation.}

 \textit{If $\lambda^{|T|}=\lambda$ or $\forall \mu < \lambda(
\mu^{|T|} < \lambda)$, then $\K^T$ has a universal model of
cardinality
$\lambda$.}

There are many  examples of theories satisfying the hypothesis of
Theorem \ref{mainc} (see Example \ref{ex}). One of them is the theory
of torsion-free abelian groups. So as
straightforward corollary we get:

\textbf{Corollary \ref{u-tf}.}
\textit{If $\lambda^{\aleph_0}=\lambda$ or $\forall \mu < \lambda(
\mu^{\aleph_0} < \lambda)$, then the class of torsion-free abelian
groups with pure embeddings has a universal group of cardinality
$\lambda$.}

In \cite[1.2]{sh820} Shelah shows a result analogous to the above
theorem, but instead of working with the class of torsion-free
abelian groups he works with the class of reduced torsion-free
abelian groups. The reason Corollary \ref{u-tf} transfers to Shelah's
setting is because  every abelian group can be written as a direct sum of a
unique divisible subgroup and a unique up to isomorphism reduced
subgroup (see \cite[\S 4.2.5]{fuchs}). Shelah's statement is
Corollary \ref{she-res} in this paper.

The proof presented here is not a generalization of Shelah's original
idea. We prove first that the class is $\lambda$-Galois-stable (for $\lambda^{|T|}=\lambda$) and then using
that the class is an abstract elementary class we construct
universal extensions of size $\lambda$  (for
$\lambda^{|T|}=\lambda$). By contrast, Shelah first constructs
universal
extensions of cardinality $\lambda$ (for
$\lambda^{\aleph_0}=\lambda$) and from it he concludes that the class
is $\lambda$-Galois-stable.

The methods used in both proofs are also quite different. We
exploit the fact that any theory of $R$-modules has
$pp$-quantifier elimination and that our class is an abstract elementary class with joint embedding and amalgamation. By contrast,
Shelah's argument seems to
only work in the restricted setting of torsion-free abelian groups. This is
the case since the main device of  his argument is the existence of a
metric in reduced torsion-free abelian groups and the completions
obtained from this metric.

In \cite{maz}, the second author began the study of limit models in
classes of abelian groups. In this paper we go one step further and
begin the study of  limit models in classes of $R$-modules with joint
embedding and amalgamation.  \emph{Limit models} were introduced in
\cite{kosh} as a substitute for saturation in the context of AECs.
 Intuitively the reader can think of them as universal models with some level of homogeneity (see Definition \ref{limit}).
They have proven to be an important concept in tackling Shelah's
eventual categoricity conjecture. The key question has been the
uniqueness of limit models of the same cardinality but of different length.\footnote{A more
detailed account of the importance of limit models is given in
\cite[\S 1]{maz}.}

We show that limit models in $\K^T$ are elementary equivalent (see Lemma \ref{limeq}).
We generalize \cite[4.10]{maz} by showing that limit models with
chains of cofinality greater than $ |T|$ are pure-injective (see
Theorem \ref{bigpi}). We characterize limit models with chains of
countable cofinality for classes that are closed under direct sums
(see Theorem \ref{countablelim}). The main feature is that there is a
natural way to construct universal models over pure-injective
modules. More precisely, given $M$ pure-injective and $U$ a universal
model of size $\| M \|$, $M \oplus U$ is universal over $M$. As a by-product of our study of limit models and \cite[4.15]{maz}
we answer Question 4.25 of \cite{maz}.

\textbf{Theorem \ref{main1}.}
\textit{If $G$ is a $(\lambda, \omega)$-limit model in the class of
torsion-free abelian groups with pure embeddings, then $G \cong
\mathbb{Q}^{(\lambda)} \oplus \prod_{p}
\overline{\mathbb{Z}_{(p)}^{(\lambda)}}^{(\aleph_0)}$.}

Finally, combining Corollary \ref{u-tf} and Theorem \ref{main1}, we
are able to construct universal extensions of cardinality $\lambda$
for some cardinals such that the class of torsion-free groups with
pure embeddings is not $\lambda$-Galois-stable  (an example for such
a $\lambda$ is $\beth_\omega$). This is the first example of an AEC
with joint embedding, amalgamation and no maximal models in which one
can construct universal extensions of cardinality $\lambda$ without
the hypothesis of $\lambda$-Galois-stability.

The paper is organized as follows. Section 2 presents necessary
background. Section 3 studies classes of the form $\Ka$, studies universal models in these classes
and shows how \cite[1.2]{sh820} is a special case of the theory
developed in the section. Section 4 begins the study of limit models
for classes of $R$-modules with joint embedding and 
amalgamation.  It also
answers Question 4.25 of \cite{maz}.

This paper was written while the second author was working on a Ph.D.
under the direction of Rami Grossberg at Carnegie Mellon University
and he would like to thank Professor Grossberg for his guidance and
assistance in his research in general and in this work in particular. We would also like to thank Sebastien Vasey for several comments that helped improve the paper. We would also like to thank John T. Baldwin for introducing us to one another and for useful comments that improved the paper. We are grateful to the referees for their comments that significantly improved the paper.

\section{Preliminaries}

We introduce the key concepts of abstract elementary classes and the
model theory of modules that are used in this paper. Our primary 
references for the former are
 \cite[\S 4 - 8]{baldwinbook09} and  \cite[\S 2, \S
4.4]{ramibook}. Our primary references for the latter is
\cite{prest}.

\subsection{Abstract elementary classes}

Abstract elementary classes (AECs) were introduced by Shelah in
\cite{sh88}. Among the requirements we have that an AEC is closed under directed colimits and that every set
is contained in a small model in the class. Given a model $M$, we
will write $|M|$ for its underlying set and $\| M \|$ for its
cardinality.

\begin{defin}\label{aec-def}
  An \emph{abstract elementary class} is a pair $\K = (K, \lea)$,
where:

  \begin{enumerate}
    \item $K$ is a class of $\tau$-structures, for some fixed
language $\tau = \tau (\K)$. 
    \item $\lea$ is a partial order on $K$. 
    \item $(K, \lea)$ respects isomorphisms: 
    
    If $M \lea N$ are in $K$
and $f: N \cong N'$, then $f[M] \lea N'$. 

In particular 
\tupop taking $M =
N$\tupcp, $K$ is closed under isomorphisms.
    \item If $M \lea N$, then $M \subseteq N$. 
    \item Coherence: If $M_0, M_1, M_2 \in K$ satisfy $M_0 \lea M_2$,
$M_1 \lea M_2$, and $|M_0| \subseteq |M_1|$, then $M_0 \lea M_1$.
    \item Tarski-Vaught axioms: Suppose $\delta$ is a limit ordinal
and $\{ M_i \in K : i < \delta \}$ is an increasing chain. Then:

        \begin{enumerate}

            \item $M_\delta := \bigcup_{i < \delta} M_i \in K$ and
$M_i \lea M_\delta$ for every $i < \delta$.
            \item\label{smoothness-axiom}Smoothness: If there is some
$N \in K$ so that for all $i < \delta$ we have $M_i \lea N$, then we
also have $M_\delta \lea N$.

        \end{enumerate}

    \item L\"{o}wenheim-Skolem-Tarski axiom: There exists a cardinal
$\lambda \ge |\tau(\K)| + \aleph_0$ such that for any $M \in K$ and
$A \subseteq |M|$, there is some $M_0 \lea M$ such that $A \subseteq
|M_0|$ and $\|M_0\| \le |A| + \lambda$. We write $\LS (\K)$ for the
minimal such cardinal.
  \end{enumerate}
\end{defin}

\begin{nota}\
\begin{itemize}
\item If $\lambda$ is cardinal and $\K$ is an AEC, then $\K_{\lambda}=\{ M \in \K : \| M \|=\lambda \}$.

\item Let $M, N \in \K$. If we write ``$f: M \to N$" we assume that
$f$ is a $\K$-embedding, i.e., $f: M \cong f[M]$ and $f[M] \lea N$.
In particular $\K$-embeddings are always monomorphisms.

\item  Let $M, N \in \K$ and $A \subseteq M$.  If we write ``$f : M \xrightarrow[A]{} N$"  we assume that
$f$ is a $\K$-embedding and that $f\rest_A=\id_{A}$.
\end{itemize}
\end{nota}

Let us recall the following three properties. They are satisfied by all the classes considered in this paper, although not every AEC satisfies them.

\begin{defin}\
\begin{enumerate}
\item $\K$ has the \emph{amalgamation property} if for every $M, N, R \in \K$ such that $M \lea N, R$,
there is $R^* \in \K$ with $R \lea R^*$ and a $\K$-embedding  
$f: N \xrightarrow[M]{} R^*$.
\item  $\K$ has the \emph{joint embedding property} if for every $M, N \in \K$,
there is $R^* \in \K$ with $N \lea R^*$ and a $\K$-embedding  
$f: M \rightarrow R^*$.
\item  $\K$ has \emph{no maximal models} if for every $M \in \K$, there is $M^* \in \K$ such that $M <_{\K} M^*$.
\end{enumerate}

\end{defin}

  In \cite{sh300} Shelah introduced a notion of semantic type. The
original definition was refined and extended by many authors who
following \cite{grossberg2002} call
these semantic types Galois-types (Shelah recently named them orbital
types).
We present here the modern definition and call them Galois-types
throughout the text. We follow the notation of \cite[2.5]{mv}.

\begin{defin}\label{gtp-def}
  Let $\K$ be an AEC.
  
  \begin{enumerate}
    \item Let $\K^3$ be the set of triples of the form $(\bb, A,
N)$, where $N \in \K$, $A \subseteq |N|$, and $\bb$ is a sequence
of elements from $N$. 
    \item For $(\bb_1, A_1, N_1), (\bb_2, A_2, N_2) \in \K^3$, we
say $(\bb_1, A_1, N_1)E_{\text{at}}^{\K} (\bb_2, A_2, N_2)$ if $A
:= A_1 = A_2$, and there exists $f_\ell : N_\ell \xrightarrow[A]{} N$ $\K$-embeddings such that
$f_1 (\bb_1) = f_2 (\bb_2)$ and $N \in \K$.
    \item Note that $E_{\text{at}}^{\K}$ is a symmetric and
reflexive relation on $\K^3$. We let $E^{\K}$ be the transitive
closure of $E_{\text{at}}^{\K}$.
    \item For $(\bb, A, N) \in \K^3$, let $\gtp_{\K} (\bb / A;
N) := [(\bb, A, N)]_{E^{\K}}$. We call such an equivalence class a
\emph{Galois-type}. Usually, $\K$ will be clear from the context and we will omit it.
\item For $M \in \K$, $\gS_{\K}(M)= \{  \gtp_{\K}(b / M; N) : M
\leq_{\K} N\in \K \text{ and } b \in N\} $ 
\item For $\gtp_{\K} (\bb / A; N)$ and $C \subseteq A$, $\gtp_{\K} (\bb / A; N)\upharpoonright_{C}:= [(\bb, C, N)]_E$.

  \end{enumerate}

\end{defin}

\begin{remark}\label{trans}
If $\K$ has amalgamation, it is straightforward to show that $E_{\text{at}}^{\K}$ is
transitive.
\end{remark}

\begin{defin}
 An AEC is \emph{$\lambda$-Galois-stable}  if for any $M \in
\K_\lambda$, $| \gS_{\K}(M) | \leq \lambda$.

\end{defin}

The following notion was isolated by  Grossberg and VanDieren in \cite{tamenessone}.

\begin{defin} 
$\K$ is \emph{$(< \kappa)$-tame} if for any $M \in \K$ and $p \neq q \in \gS(M)$,  there is $A \subseteq M$ such that $|A |< \kappa$ and $p\upharpoonright_{A} \neq q\upharpoonright_{A}$. $\K$ is \emph{$\kappa$-tame} if it is $(< \kappa^+)$-tame.
\end{defin}

Let us recall the following concept that was introduced in
\cite{kosh}.

\begin{defin} Let $\K$ be an AEC.
$M$ is $\lambda$-\emph{universal over} $N$ if and only if $N \lea M$
 and for any $N^* \in \K_{\leq\lambda}$ such that
$N \lea N^*$, there is $f: N^* \xrightarrow[N]{} M$. $M$ is \emph{universal
over} $N$ if and only if $\| N\|= \| M\| $ and $M$ is $\| M
\|$-\emph{universal over} $N$. 
\end{defin} 

The next fact gives conditions for the existence of universal extensions.

\begin{fact}[{\cite[\S II]{shelahaecbook}, \cite[2.9]{tamenessone}}]\label{uni} Let $\K$ an AEC with joint embedding, amalgamation and no maximal models. If $\K$ is $\lambda$-Galois-stable, then for every  $P
\in \K_\lambda$, there is $M \in \K_\lambda$ such that $M$ is
universal over $P$.
\end{fact}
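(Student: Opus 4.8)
The plan is to fix $P \in \K_\lambda$ and produce the universal extension as a union $M := \bigcup_{i<\lambda} M_i$ of a $\lea$-increasing continuous chain $\langle M_i : i < \lambda \rangle$ with $M_0 = P$ and every $M_i \in \K_\lambda$. Granting such a chain, $P = M_0 \lea M$ and $\|M\| = \lambda$, since $M$ is a union of $\lambda$ models each of size $\lambda$; so the entire content is to arrange the successors $M_{i+1}$ so that the union is $\lambda$-universal over $P$. The two ingredients that make this possible are $\lambda$-Galois-stability (to keep the successor steps inside $\K_\lambda$) and amalgamation (to carry out the gluing), with joint embedding keeping every model in a single amalgamation class and no maximal models guaranteeing that proper extensions are always available.

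The successor step is where $\lambda$-Galois-stability does its work. Since $|\gS_\K(M_i)| \le \lambda$, there are at most $\lambda$ Galois types to take care of over $M_i$. Using no maximal models to supply proper extensions and amalgamation to glue them, I would build $M_{i+1} \in \K_\lambda$ that realizes every type in $\gS_\K(M_i)$ and, more strongly, absorbs the relevant small extensions of $M_i$: concretely, amalgamate over $M_i$ a family of at most $\lambda$ witnessing extensions into a single model, applying the L\"owenheim--Skolem--Tarski axiom to keep $\|M_{i+1}\| = \lambda$. At limit stages one takes unions, which remain in $\K$ with $M_j \lea M_i$ for $j < i$ by the Tarski--Vaught axioms. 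A bookkeeping along the chain of length $\lambda$ ensures every task that appears is eventually addressed.

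To verify that $M$ is $\lambda$-universal over $P$, take any $N^* \in \K_{\le\lambda}$ with $P \lea N^*$, write $N^*$ as a continuous increasing union of smaller $\lea$-submodels beginning with $P$, and build a $\K$-embedding $N^* \xrightarrow[P]{} M$ as an increasing union of partial embeddings, extending one submodel-step at a time and taking unions at limits, where the smoothness axiom guarantees that the limit maps are again $\K$-embeddings into $M$. I expect the main obstacle to be precisely the successor step of this back-and-forth, which is the real content of the theorem: after transporting and realizing the Galois type of the newly added part over the current image, one must upgrade the mere \emph{equality} of Galois types into an \emph{honest} $\K$-embedding that lands inside $M$ rather than in some external amalgam. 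This is exactly why the successor models of the second paragraph must be constructed to absorb extensions and not merely to realize types, and it is where amalgamation together with $\lambda$-Galois-stability (bounding the number of tasks by $\lambda$) is indispensable; the cardinal hypotheses on $\lambda$ are what allow the whole construction to close off inside $\K_\lambda$. Granting this, $M$ embeds every such $N^*$ over $P$, so $M$ is the desired universal model of cardinality $\lambda$.
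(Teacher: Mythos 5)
The paper does not actually prove this statement: it is quoted as a Fact from \cite[\S II]{shelahaecbook} and \cite[2.9]{tamenessone}, so your attempt must be measured against the argument in those sources. Your chain construction is fine: by $\lambda$-Galois-stability there are at most $\lambda$ types over $M_i$, and amalgamating $\le\lambda$ witnessing extensions over $M_i$ and applying L\"owenheim--Skolem--Tarski gives $M_{i+1}\in\K_\lambda$ realizing all of $\gS(M_i)$. The gap is precisely the step you ask the reader to grant, and it is not a technicality --- it is the content of the theorem. Your back-and-forth sends pieces of $N^*$ \emph{into} $M$, and, as you yourself observe, realizing the transported Galois type of a new element only yields an identification inside some external amalgam, not an extension of the current partial embedding landing in $M$. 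Your proposed remedy, that $M_{i+1}$ should ``absorb the relevant small extensions'' of $M_i$, is either circular or insufficient: if ``absorb'' means that every extension of $M_i$ of size $\le\lambda$ embeds into $M_{i+1}$ over $M_i$, that is literally universality of $M_{i+1}$ over $M_i$, the statement being proven; if it means what your second paragraph actually constructs (one witness model per $1$-type), it does not resolve the obstacle, because after the first step the image $f_j[N^*_j]$ of your partial embedding is a proper submodel of some $M_\alpha$, and the next chunk of $N^*$ is a new \emph{model} over $f_j[N^*_j]$, not an element over $M_\alpha$; Galois-stability bounds the number of $1$-types over models in the chain, not the number of types of models, so nothing guarantees that this chunk embeds into $M_{\alpha+1}$ over $f_j[N^*_j]$.

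The idea that closes the gap in the cited proofs is to reverse the direction of the embeddings: only the $M$-side must stay rigid, while the $N^*$-side may grow. One builds an increasing continuous chain $\langle N^i : i<\lambda\rangle$ with $N^0=N^*$, together with an increasing sequence of $\K$-embeddings $g_i\colon M_i\to N^i$ with $g_0=\id_P$. At step $i$, bookkeeping hands you one element $b$ of some earlier $N^j$; let $q\in\gS(M_i)$ be the $g_i$-transport of $\gtp(b/g_i[M_i];N^i)$, and let $c\in M_{i+1}$ realize $q$. The equality of $\gtp(c/M_i;M_{i+1})$ with the transported type means exactly (using amalgamation, as in Remark \ref{trans}) that there are $N^{i+1}$ with $N^i\lea N^{i+1}$ and a $\K$-embedding $g_{i+1}\colon M_{i+1}\to N^{i+1}$ extending $g_i$ with $g_{i+1}(c)=b$: here the ``external amalgam'' is harmless, because it is adopted as the next model of the $N$-side chain, and the embedding extends to \emph{all} of $M_{i+1}$, not just to one element. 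After $\lambda$ steps, with bookkeeping ensuring every element of every $N^j$ is eventually captured, one has $g_\lambda=\bigcup_{i<\lambda}g_i\colon M\to\bigcup_{i<\lambda}N^i$ with $N^*\subseteq g_\lambda[M]$; the coherence axiom then gives $N^*\lea g_\lambda[M]$, so $g_\lambda^{-1}\rest_{N^*}\colon N^*\xrightarrow[P]{}M$ is the required embedding. Without this reversal-plus-coherence device (or some equivalent), your forward back-and-forth cannot be completed from $1$-type realization alone.
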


The following notion was introduced in \cite{kosh} and plays an important role in this paper.

\begin{defin}\label{limit}
Let $\alpha < \lambda^+$ a limit ordinal.  $M$ is a \emph{$(\lambda,
\alpha)$-limit model over} $N$ if and only if there is $\{ M_i : i <
\alpha\}\subseteq \K_\lambda$ an increasing continuous chain such
that $M_0 :=N$, $M_{i+1}$ is universal over $M_i$ for each $i <
\alpha$ and $M= \bigcup_{i < \alpha} M_i$. We say that $M\in
\K_\lambda$ is a $(\lambda, \alpha)$-limit model if there is $N \in
\K_\lambda$ such that $M$ is a $(\lambda, \alpha)$-limit model over
$N$. We say that $M\in \K_\lambda$ is a limit model if there is
$\alpha < \lambda^+$ limit such that $M$  is a $(\lambda,
\alpha)$-limit model.

\end{defin}

Observe that by iterating Fact \ref{uni} there exist limit models in Galois-stability cardinals for AECs with joint embedding, amalgamation and no maximal models.

In this paper, we deal with the classical global notion of
universal model.

\begin{defin}
Let $\K$ an AEC and $\lambda$ a cardinal. $M \in \K$ is a \emph{universal model in
$\K_\lambda$} if $M \in \K_\lambda$ and if given any $N \in \K_\lambda$, there is $f: N \to M$.
\end{defin}

\begin{remark}\label{easy-u}
When an abstract elementary class has joint embedding, then  $M$ is
universal over $N$ or $M$ is a limit model implies that $M$ is a
universal model in $\K_{\| M \|}$. A proof is given in 
\upshape{\cite[2.10]{maz}}.
\end{remark}

\subsection{Model theory of modules} 
For most of the basic results of the model theory of modules, we use 
the comprehensive text \cite{prest} of M.\ Prest as our primary 
source. The detailed history of these results can be found there.

The following definitions are
fundamental and will be used throughout the text.

\begin{defin} Let $R$ be a ring and $L_{R}= \{0, +,
-\} \cup \{ r  : r \in R \}$ be the language of $R$-modules.
\begin{itemize}
\item $\phi(\bar{v})$ is a \emph{$pp$-formula} if and only if
\[\phi(\bar{v})= \exists w_1  ... \exists w_l (\bigwedge_{j=1}^{m}
\Sigma_{i=1}^{n} r_{i,j} v_i + \Sigma_{k=1}^{l} s_{k,j}w_{k}=0 ),\]
 where $r_{i,j}, s_{k,j} \in R$ for every $i \in \{ 1,..., n\}, j \in \{1,...,m\}, k\in \{1,...,l\}$. 
\item Given $N$ an $R$-module, $A \subseteq N$ and $\bar{b} \in N^{<
\omega}$ we define the \emph{pp-type} of $\bar{b}$ over $A$ in $N$ as \[pp(\bar{b}/A, N)=
\{\phi(\bar{v}, \bar{a}) : \phi(\bar{v}, \bar{w}) \text{ is a
pp-formula, } \bar{a} \in A \text{ and }  N\models \phi[\bar{b},
\bar{a}] \}.\]

\item Given $M, N$ $R$-modules we say that $M$ is a \emph{pure
submodule} of $N$, written as $M \leq_{pp} N$, if and only if $M
\subseteq N$ and $pp(\bar{a}/ \emptyset, M)=  pp(\bar{a}/ \emptyset,
N)$ for every $\bar{a} \in M^{<\omega}$. Observe that in particular
if $M \leq_{pp} N$ then $M$ is a submodule of $N$.
\end{itemize}
\end{defin}

A key property of $R$-modules is that they have $pp$-quantifier
elimination, i.e., every formula in the language of $R$-modules is
equivalent to a boolean combination of $pp$-formulas.

\begin{fact}[Baur-Monk-Garavaglia, see e.g.
{\cite[\S 2.4]{prest}}]\label{ppq} Let $R$ be a ring
and $M$ a \tupop left\tupcp\ $R$-module. Every formula in the
language of
$R$-modules is equivalent modulo $Th(M)$ to a boolean combination of pp-formulas.

\end{fact}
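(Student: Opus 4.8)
The plan is to prove this by induction on the complexity of formulas, reducing everything to the elimination of a single existential quantifier from a conjunction of pp-formulas and negated pp-formulas. First I would observe that the class of Boolean combinations of pp-formulas contains every atomic formula (an atomic formula $\sum_i r_i v_i = 0$ is already pp, with no existential quantifier) and is trivially closed under $\neg$, $\wedge$, and $\vee$. Hence the only point at issue is closure under $\exists$: it suffices to show that if $\phi$ is a Boolean combination of pp-formulas, then $\exists w\, \phi(w, \bar v)$ is again equivalent modulo $Th(M)$ to such a combination. Putting $\phi$ in disjunctive normal form, using that $\exists$ distributes over $\vee$ and that a conjunction of pp-formulas is again pp, this reduces to analyzing a single formula of the form
\[
\exists w\, \Bigl( \theta(w, \bar v) \wedge \bigwedge_{i=1}^{n} \neg \eta_i(w, \bar v) \Bigr),
\]
where $\theta$ and $\eta_1, \dots, \eta_n$ are pp-formulas.

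The heart of the argument is algebraic. In any $N \models Th(M)$ and for any parameters $\bar b$, the solution set $\theta(N, \bar b) := \{ c \in N : N \models \theta(c, \bar b) \}$ is either empty or a coset of the pp-definable subgroup $\theta(N, \bar 0)$, and the same holds for each $\eta_i$. Thus the displayed formula asserts that $\theta(N, \bar b)$ is nonempty and is \emph{not} entirely covered by the finitely many cosets $\eta_i(N, \bar b)$. The key tool is B.\,H.\ Neumann's lemma: a coset of a subgroup contained in a finite union of cosets of subgroups is already contained in the union of those subgroups of finite index. Applying this inside $\theta(N, \bar 0)$ to the subgroups $\theta(N, \bar 0) \cap \eta_i(N, \bar 0)$, whether the covering occurs is governed by (i) the nonemptiness of $\theta(N, \bar b)$ and of the intersections $\theta(N, \bar b) \cap \eta_i(N, \bar b)$, all of which are pp-conditions on $\bar b$; (ii) the Baur--Monk indices $[\theta(N, \bar 0) : \theta(N, \bar 0) \cap \eta_i(N, \bar 0)]$; and (iii) an inclusion--exclusion count over the finite-index intersections.

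I would then exploit that we work modulo $Th(M)$: each index in (ii) is a fixed value in $\mathbb{N} \cup \{\infty\}$, so only finitely many of the subgroups $\eta_i(N, \bar 0)$ are of finite index in $\theta(N, \bar 0)$, and which ones is decided by $Th(M)$. Each condition ``$[\theta(N, \bar 0) : \theta(N, \bar 0) \cap \eta_i(N, \bar 0)] \geq k$'' is expressible by a pp-sentence asserting the existence of $k$ elements of $\theta(N, \bar 0)$ pairwise inequivalent modulo $\eta_i(N, \bar 0)$, which unwinds to a Boolean combination of pp-sentences true in every model of $Th(M)$. Combining this fixed index data with the parameter-dependent nonemptiness and counting conditions of (i) and (iii), the existential formula becomes equivalent modulo $Th(M)$ to an explicit Boolean combination of pp-formulas in $\bar v$, which completes the induction.

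The step I expect to be the main obstacle is (iii): converting Neumann's covering criterion into a concrete inclusion--exclusion count and verifying that the resulting ``the coset avoids the union'' condition is genuinely a Boolean combination of pp-formulas in the parameters $\bar v$. The bookkeeping is delicate because the finitely many finite-index intersections may overlap, so one must track which of the cosets are nonempty and compute the size of their union in the finite quotient, all while keeping each clause pp-definable and using the fixity of the invariants modulo $Th(M)$ to discard the infinite-index contributions.
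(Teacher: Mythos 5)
The paper does not actually prove this statement: it is imported as a black-box fact, attributed to Baur--Monk--Garavaglia with a citation to Prest \cite[\S 2.4]{prest}, so there is no in-paper argument to compare against. Your proposal is, in outline, exactly the standard proof given in that cited source: reduce by induction and disjunctive normal form to eliminating one existential quantifier from $\exists w\,(\theta \wedge \bigwedge_{i}\neg\eta_i)$ with $\theta,\eta_i$ pp, view solution sets as cosets of the pp-definable subgroups $\theta(N,\bar 0)$, $\eta_i(N,\bar 0)$, invoke B.~H.~Neumann's lemma to discard the cosets of infinite relative index, and close with an inclusion--exclusion count whose shape is a constant of the theory; the only parameter-dependent ingredients are the nonemptiness conditions $\exists w\,(\theta(w,\bar v)\wedge\bigwedge_{i\in S}\eta_i(w,\bar v))$, which are pp, so the output is indeed a Boolean combination of pp-formulas, and the bookkeeping you flag as the main obstacle does go through. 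One inaccuracy worth correcting: the index conditions $[\theta(N,\bar 0):\theta(N,\bar 0)\cap\eta_i(N,\bar 0)]\geq k$ are \emph{not} pp-sentences (they assert the existence of elements whose pairwise differences fail a pp-formula), and claiming they ``unwind to a Boolean combination of pp-sentences'' is either false on its face or circular; but this is harmless for your argument, since all it needs is that they are sentences and hence decided by the complete theory $Th(M)$ --- a point you also make and which is what actually fixes the finite-index set and the counting constants.
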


The above theorem makes the model theory of modules algebraic in 
character,  and we will use many of its
consequences throughout the text. See for example  Facts \ref{bool},
\ref{ineq}, \ref{pp=tp} and \ref{equiv}.

Recall that given $T$ a complete first-order theory and $A \subseteq M$ with $M$ a model of $T$, $S^T(A)$ is the set of complete first-order types with parameters in $A$. A complete first-order theory $T$ is \emph{$\lambda$-stable} if $|S^T(A)| \leq \lambda$ 
 for every $A \subseteq M$ with $|A|=\lambda$ and $M$ a model of $T$. For a complete first-order theory $T$ this is equivalent to $(Mod(T), \preceq)$ being $\lambda$-Galois-stable, where $ \preceq$ is the elementary substructure relation.

\begin{fact}[Fisher, Baur, see e.g. {\cite[3.1]{prest}}]\label{st-c}
If $T$ is a complete first-order theory extending the theory of
$R$-modules and $\lambda^{|T|} = \lambda$, then $T$ is
$\lambda$-stable.  
\end{fact}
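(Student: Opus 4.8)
The plan is to count complete types by first reducing to $pp$-types via Fact \ref{ppq} and then exploiting the additive structure of $pp$-definable sets. Fix $M \models T$ and $A \subseteq M$ with $|A| = \lambda$; note that $\lambda^{|T|} = \lambda$ forces $2^{|T|} \le \lambda$ with $\lambda$ infinite, so $|T| \le \lambda$ and $|A^{<\omega}| = \lambda$. By Baur--Monk--Garavaglia every formula is, modulo $T$, a boolean combination of $pp$-formulas, so a complete type $p \in S^T(A)$ is determined by the set of $pp$-formulas $\phi(\bar{v}, \bar{a})$ (with $\bar{a} \in A^{<\omega}$) that it contains; equivalently, by the $pp$-type $pp(\bar{b}/A, N)$ of any realization $\bar{b}$ of $p$ in an extension $N \succeq M$. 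Hence $|S^T(A)|$ is bounded by the number of $pp$-types over $A$, and it suffices to bound the latter by $\lambda$.

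The key algebraic input is that $pp$-formulas define subgroups: for a $pp$-formula $\phi(\bar{v}, \bar{w})$ with $|\bar{v}| = n$ and $|\bar{w}| = k$, the solution set $\{(\bar{b}, \bar{a}) : N \models \phi(\bar{b}, \bar{a})\}$ is a subgroup of $N^n \times N^k$. Consequently, for each fixed $\bar{a}$, the fiber $\phi(N^n, \bar{a}) = \{\bar{b} : N \models \phi(\bar{b}, \bar{a})\}$ is either empty or a coset of the subgroup $\phi(N^n, \bar{0})$. The decisive observation is that distinct cosets of $\phi(N^n, \bar{0})$ are disjoint, so a given tuple $\bar{b}$ lies in at most one of the ``named'' sets $\{\phi(N^n, \bar{a}) : \bar{a} \in A^{<\omega}\}$. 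Therefore the restriction of $pp(\bar{b}/A, N)$ to a single formula template $\phi$, namely the set $\{\bar{a} \in A^{<\omega} : N \models \phi(\bar{b}, \bar{a})\}$, is completely determined by naming which one of these cosets (if any) contains $\bar{b}$.

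Now I would count. For each of the $\le |T|$ many $pp$-formula templates $\phi(\bar{v}, \bar{w})$, the number of nonempty named cosets $\phi(N^n, \bar{a})$ with $\bar{a} \in A^{<\omega}$ is at most $|A^{<\omega}| = \lambda$, so the $\phi$-part of a $pp$-type takes at most $\lambda + 1 = \lambda$ values. Since a $pp$-type is assembled from its $\phi$-parts over all templates, the number of $pp$-types over $A$ is at most $\lambda^{|T|}$, which equals $\lambda$ by hypothesis. Combined with the first paragraph this gives $|S^T(A)| \le \lambda^{|T|} = \lambda$, and the same bound holds uniformly for $n$-types for each finite $n$, so $\lambda$-stability follows.

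I expect the main obstacle to be the second paragraph: extracting from the purely syntactic description of a type the fact that the information carried by each $pp$-formula template collapses to a single coset. This is where the model theory of modules does the real work, since the naive count over all pairs $(\phi, \bar{a})$ would give $2^\lambda$; it is precisely the subgroup (coset) structure of $pp$-definable sets, reflecting that Fact \ref{ppq} is about \emph{positive primitive} formulas, that brings the per-template count down to $\lambda$ and makes the hypothesis $\lambda^{|T|} = \lambda$ exactly the one needed.
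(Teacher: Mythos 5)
The paper does not actually prove this Fact---it is quoted directly from Prest \cite[3.1]{prest}---and your proof is correct and is essentially the standard Fisher--Baur argument from that cited source: reduce complete types to $pp$-types via $pp$-elimination, then use the coset structure of $pp$-definable sets so that each of the at most $|T|$ formula templates contributes at most $\lambda$ possible $\phi$-parts, giving $|S^T(A)| \leq \lambda^{|T|} = \lambda$. The only point worth tightening is that different types over $A$ may be realized in different extensions $N \succeq M$: either realize all of $S^T(A)$ in a single $\lambda^+$-saturated elementary extension of $M$, or note that equality of two nonempty named cosets $\phi(N^n, \bar{a}_1) = \phi(N^n, \bar{a}_2)$ is equivalent to their intersecting, hence to a first-order condition on $\bar{a}_1, \bar{a}_2$ over $M$, and is therefore absolute among elementary extensions of $M$, so your per-template count is independent of the choice of $N$.
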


Pure-injective modules generalize the notion of injective module.

\begin{defin}
A module $M$ is \emph{pure-injective} if and only if for every module
$N$, if $M \leq_{pp} N$ then $M$ is a direct summand of $N$. 
\end{defin}

There are many statements equivalent to the definition of
pure-injectivity. The following equivalence will be used in the last
section:

\begin{fact}[{\cite[2.8]{prest}}]\label{epi} Let $M$ be an
$R$-module. The following are equivalent:
\begin{enumerate}
\item $M$ is pure-injective.
\item Every $M$-consistent pp-type $p(x)$ over $A \subseteq M$ with $|A|
\leq |R| + \aleph_0$, is realized in $M$.\footnote{For an incomplete
theory $T$ we say that a  pp-type $p(x)$ over $A \subseteq M$ is
$M$-consistent if it is realized in an elementary extension of $M$.}
\end{enumerate}
\end{fact}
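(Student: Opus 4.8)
The plan is to prove the two implications separately, with the forward direction being routine and the converse carrying the real content.

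For $(1) \Rightarrow (2)$, suppose $M$ is pure-injective and let $p(x)$ be an $M$-consistent pp-type over $A \subseteq M$. By definition of $M$-consistency there is an elementary extension $N \succeq M$ realizing $p$, say by $b \in N$. Since an elementary extension is in particular a pure extension, $M \leq_{pp} N$, so pure-injectivity gives a decomposition $N = M \oplus C$ as modules. Let $\pi \colon N \to M$ be the projection along $C$; it is a homomorphism with $\pi \rest_M = \id_M$. Because pp-formulas are positive primitive they are preserved under homomorphisms, so for every $\phi(x, \bar a) \in p$ with $\bar a \in A$ we have $M \models \phi(\pi(b), \bar a)$, using $\pi(\bar a) = \bar a$. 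Hence $\pi(b) \in M$ realizes $p$; note that no cardinality restriction on $A$ is needed here.

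For $(2) \Rightarrow (1)$, I would show directly that any pure extension $M \leq_{pp} N$ splits, by producing a retraction $r \colon N \to M$ with $r \rest_M = \id_M$. Using Zorn's Lemma, take a maximal pair $(N', r')$ with $M \subseteq N' \subseteq N$ and $r' \colon N' \to M$ a homomorphism extending $\id_M$. If $N' \neq N$, pick $n \in N \setminus N'$ and attempt to extend $r'$ to $N' + Rn$; a well-defined extension sending $n$ to some $m \in M$ exists precisely when $m$ satisfies, with the $r'$-images of the relevant elements of $N'$ as parameters, every relation that $n$ satisfies over $N'$ inside $N$. These constraints assemble into a pp-type $q(x)$ over $r'[N'] \subseteq M$, and purity of $M$ in $N$ together with preservation of pp-formulas under homomorphisms shows that $q$ is $M$-consistent. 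Realizing $q$ in $M$ would contradict maximality, forcing $N' = N$ and giving the desired retraction.

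The hard part will be that the parameter set $r'[N']$ of $q$ may be far larger than $|R| + \aleph_0$, so hypothesis $(2)$ does not apply on its face. The main obstacle is therefore to reduce $q$ to a subtype over a parameter set of size at most $|R| + \aleph_0$ that still implies $q$ over $M$. Here I would use that, up to equivalence, there are only $|R| + \aleph_0$ many pp-formulas $\phi(x, \bar w)$ in a single free variable together with finitely many parameter slots, since each such formula is a finite system of linear equations over $R$; thus the lattice of pp-definable constraints on $x$ is small. Exploiting this, one extracts a subtype $q_0$ of $q$ over some $A_0 \subseteq r'[N']$ with $|A_0| \leq |R| + \aleph_0$ such that any element of $M$ realizing $q_0$ automatically realizes all of $q$, and then applies $(2)$ to $q_0$ to obtain the required $m \in M$. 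Packaging this reduction cleanly, rather than the Zorn's Lemma bookkeeping, is where the argument must do its real work.
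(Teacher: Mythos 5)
The paper itself does not prove this statement --- it is quoted as a Fact from Prest's book \cite[2.8]{prest} --- so your attempt can only be judged on its own merits. Your direction $(1) \Rightarrow (2)$ is correct (and you rightly observe that no cardinality restriction is needed there). The direction $(2) \Rightarrow (1)$, however, has a genuine gap: the key claim that ``purity of $M$ in $N$ together with preservation of pp-formulas under homomorphisms shows that $q$ is $M$-consistent'' is false, and in fact the whole Zorn setup fails. The finite subsystem $\{s_i x = r'(s_i n) : i \leq k\}$ is solved by $n$ in $N$ only relative to the parameters $s_i n \in N'$, not their $r'$-images; purity cannot be invoked because those parameters need not lie in $M$, and $r'$ cannot be applied because the witness $n$ does not lie in $N'$. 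Concretely: take $R = \mathbb{Z}$, $M = \mathbb{Z}/4$ (which is pure-injective, hence satisfies $(2)$), $N = M \oplus \mathbb{Z}$ (so $M \leq_{pp} N$; indeed $M$ is a direct summand), $N' = M \oplus 2\mathbb{Z}$, and $r' \colon N' \to M$ given by $r'(m, 2t) = m + t\cdot\overline{1}$, a homomorphism fixing $M$ pointwise. For $n = (0,1)$ the constraint system is $\{\,2t\,x = \overline{t} : t \in \mathbb{Z}\,\}$, which contains $2x = \overline{1}$ and so is not even $M$-consistent. Hence no extension of $r'$ whose domain contains $n$ exists at all: any maximal pair above $(N', r')$ has domain properly contained in $N$, even though $M$ is pure-injective. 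The moral is that a partial retraction on an arbitrary intermediate submodule can make irrevocably bad choices; the obstruction to extending it has nothing to do with whether $M$ satisfies $(2)$, so ``maximal implies total'' is simply false and the strategy cannot be repaired locally.

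Relatedly, you misdiagnose where the real work lies. The one-element extension system $q$ only ever involves the parameters $r'(sn)$ for $s \in R$, hence at most $|R| + \aleph_0$ of them, so hypothesis $(2)$ applies to it on its face; no extraction of a small subtype is needed (and your proposed extraction is dubious anyway: that there are only $|R|+\aleph_0$ pp-formulas does not let a small subtype imply a type with many parameter instances). The actual content of $(2) \Rightarrow (1)$ in the standard proofs (Prest \cite[2.8]{prest}; cf. Fuchs or Warfield) is a different reduction: one first upgrades $(2)$ to full algebraic compactness --- every system of linear equations with parameters in $M$, in arbitrarily many variables, that is finitely solvable in $M$ is solvable in $M$ --- and then splits a pure extension $M \leq_{pp} N$ in one shot by solving the system with variables $x_b$ for $b \in N$ and equations $x_b + x_c = x_{b+c}$, $s\,x_b = x_{sb}$, and $x_a = a$ for $a \in M$. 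There purity is used correctly: each finite subsystem has all of its parameters in $M$ and is solved in $N$ by $x_b := b$, so by purity it is solved in $M$. The genuinely hard step, which your sketch does not address, is the passage from one-variable types over small parameter sets to these huge many-variable systems.
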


That is, pure-injective modules are saturated with respect to 
$pp$-types. They often suffice as a substitute for saturated models 
in the model theory of modules.

 We will also use the pure hull of a module. The next
fact has all the information the reader will need about them. They
are studied extensively in \cite[\S 4]{prest} and \cite[\S
3]{ziegler}.
\begin{fact}\label{pih} \
\begin{enumerate}

\item For $M$ a module the \emph{pure hull of $M$}, denoted by
$\overline{M}$, is a pure-injective module such that $M \leq_{pp}
\overline{M}$ and it is minimum with respect to this. Its existence
follows from {\upshape \cite[3.6]{ziegler}} and the fact that every module can be
embedded in a pure-injective module.
\item {\upshape\cite{sab}} For $M$ a module, $M \preceq
\overline{M}$. 
\end{enumerate}
\end{fact}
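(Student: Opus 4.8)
The statement has two parts, and since it is quoted from the literature the plan is to reconstruct the standard arguments rather than invent new ones. For part (1), I would first secure a pure embedding of $M$ into \emph{some} pure-injective module: the cleanest choice is a sufficiently saturated elementary extension $M \preceq M^*$, which is pure-injective by the criterion in Fact \ref{epi} (a saturated model realizes every $pp$-type over a small parameter set) and into which $M$ embeds purely since elementary extensions are pure. The pure hull is then extracted from $N := M^*$ through the theory of pure-essential extensions: call $M \leq_{pp} N'$ \emph{pure-essential} if every homomorphism out of $N'$ whose restriction to $M$ is a pure monomorphism is itself a pure monomorphism. By Zorn's lemma one fixes a maximal pure-essential extension $\overline{M}$ of $M$ lying inside $N$. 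The two points to verify are that $\overline{M}$ is itself pure-injective—which follows from its maximality as a pure-essential extension together with the pure-injectivity of $N$—and that it is minimum: given any pure-injective $P$ with $M \leq_{pp} P$, pure-injectivity of $P$ extends the inclusion $M \hookrightarrow P$ to a map $g : \overline{M} \to P$, and pure-essentiality of $M \leq_{pp} \overline{M}$ forces $g$ to be a pure embedding fixing $M$. This is the content of \cite[3.6]{ziegler}.

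For part (2) the target is $M \preceq \overline{M}$, and the engine is $pp$-quantifier elimination (Fact \ref{ppq}). Purity of $M \leq_{pp} \overline{M}$ gives at once that $M$ and $\overline{M}$ agree on every $pp$-formula with parameters in $M$, in both directions. To upgrade this to full elementary equivalence I would control the Baur--Monk invariants, i.e.\ the indices $[\phi(-):\psi(-)]$ for $pp$-pairs $\phi \geq \psi$: because $M$ is a submodule an infinite index stays infinite, while the fact that $\overline{M}$ is a \emph{pure-essential} extension of $M$ is precisely what prevents a finite index from growing, so the invariants coincide and $M \equiv \overline{M}$. A Tarski--Vaught argument then finishes: given $\psi(v,\bar a)$ with $\bar a \in M$ and a witness in $\overline{M}$, one rewrites $\psi$ modulo the common theory as a boolean combination of $pp$-formulas and uses purity to pull a witness back into $M$. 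This recovers Sabbagh's theorem \cite{sab}.

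The main obstacle is the preservation of the Baur--Monk invariants in part (2): showing that passing to the pure hull does not enlarge any finite index $[\phi:\psi]$ is the genuinely nontrivial step, and it is exactly where pure-essentiality, rather than mere purity, is required. An alternative packaging of the same difficulty avoids invariants entirely and reuses the model $M^*$ from part (1): minimality embeds $\overline{M}$ purely into $M^*$ over $M$, so that $M \leq_{pp} \overline{M} \leq_{pp} M^*$ with $M \preceq M^*$, and one then argues that the elementary embedding $M \preceq M^*$ descends along the pure embedding to yield $M \preceq \overline{M}$. Either route reduces to the same heart of the matter, namely transferring the first-order theory of the large pure-injective model down to the pure hull.
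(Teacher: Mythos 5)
The paper does not actually prove this statement: it is quoted as a Fact, with part (1) attributed to Ziegler and part (2) to Sabbagh, so your proposal can only be measured against the standard arguments behind those citations --- and it does reconstruct them. Your part (1) is the classical Warfield--Ziegler construction: purely embed $M$ into a suitably saturated elementary extension $M^*$ (pure-injective by the criterion of Fact \ref{epi}), apply Zorn's lemma to pure-essential extensions of $M$ inside $M^*$, and get minimality by extending the inclusion $M \hookrightarrow P$ along the pure embedding $M \leq_{pp} \overline{M}$ and invoking pure-essentiality of that embedding; this is exactly the content of the cited result, and the one step you delegate to the citation (that a maximal pure-essential extension inside a pure-injective module is itself pure-injective) is indeed its nontrivial core. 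For part (2), however, your two routes are not equally sound. The first rests on the claim that pure-essentiality ``is precisely what prevents a finite index from growing''; there is no evident direct argument for this from the definition of pure-essential --- in practice that preservation of finite Baur--Monk invariants is a \emph{consequence} of $M \equiv \overline{M}$ rather than a lemma available on the way to proving it, so as written this route has a gap. Your ``alternative packaging'' is the correct and standard proof: by minimality from part (1), $M \leq_{pp} \overline{M} \leq_{pp} M^*$ over $M$ with $M \preceq M^*$; monotonicity of invariants under pure embeddings (Fact \ref{ineq}) squeezes each invariant of $\overline{M}$ between the corresponding invariants of two elementarily equivalent modules, so $M \equiv \overline{M}$ by Fact \ref{equiv}; and then purity plus $pp$-quantifier elimination (Fact \ref{ppq}) upgrades equivalence to $M \preceq \overline{M}$, since modulo the common complete theory every formula is a boolean combination of $pp$-formulas, whose truth values over parameters from $M$ transfer between $M$ and $\overline{M}$ by purity. (One small slip there: infinite invariants persist upward because $M$ is a \emph{pure} submodule, not merely a submodule.) In sum, your proposal is essentially correct and follows the same path as the sources the paper cites, provided part (2) is carried out by the second route.
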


\subsection{Torsion-free groups} The following class will be studied
in detail.

\begin{defin}
Let $\K^{tf}=(K^{tf}, \leq_{pp})$ where $K^{tf}$ is the class of
torsion-free abelian groups in the language $L_{\mathbb{Z}}= \{0, +,
-\} \cup \{ z : z \in \mathbb{Z} \}$ \tupop the usual language of
$\mathbb{Z}$-modules\tupcp and $\leq_{pp}$ is the pure
subgroup relation. Recall that $H$ is a pure subgroup of $G$ if for
every $n \in \mathbb{N}$, $nG \cap H  = nH$.
\end{defin}

It is known that $\K^{tf}$ is an AEC with
$\LS(\K^{tf})=\aleph_0$ that has joint embedding, amalgamation and no
maximal models (see \cite{grp}, \cite{baldwine} or \cite[\S 4]{maz}).
Furthermore limit models of uncountable cofinality were described in
\cite{maz}.

\begin{fact}[{\cite[4.15]{maz}}]\label{purec} 
If $G \in \K^{tf}$ is a $(\lambda, \alpha)$-limit model and
$\cof(\alpha) \geq \omega_1$, then \[G \cong \mathbb{Q}^{(\lambda)}
\oplus \prod_{p} \overline{\mathbb{Z}_{(p)}^{(\lambda)}}. \]
\end{fact}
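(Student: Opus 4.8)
The plan is to first show that $G$ is pure-injective, exploiting that $\cof(\alpha)\ge\omega_1$, and then to read off its isomorphism type from the structure theory of pure-injective torsion-free abelian groups, fixing the multiplicities by sandwiching universality against cardinality. For the \emph{pure-injectivity}, write $G=\bigcup_{i<\alpha}M_i$ with each $M_{i+1}$ universal over $M_i$. Since $R=\mathbb{Z}$, Fact~\ref{epi} reduces the problem to realizing in $G$ every $G$-consistent $pp$-type $p(x)$ over a countable $A\subseteq G$. Because $\cof(\alpha)\ge\omega_1$, some $M_i$ already contains the countable set $A$. Using $G$-consistency and $\LS(\K^{tf})=\aleph_0$, choose $N\in\K^{tf}$ with $M_i\leq_{pp}N$, $\|N\|=\lambda$, and $p$ realized in $N$; by universality of $M_{i+1}$ over $M_i$ there is $f:N\xrightarrow[M_i]{}M_{i+1}$, and since $f$ fixes $A$ and pure embeddings preserve $pp$-formulas, $f$ carries a realization of $p$ to one inside $M_{i+1}\subseteq G$. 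Hence $G$ is pure-injective. This is the one clean step, and it is the only place where the uncountable cofinality is used.

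Next comes the \emph{structural decomposition}. As $G$ is pure-injective it is algebraically compact, so its maximal divisible subgroup $D$ splits off, $G=D\oplus H$, with $D\cong\mathbb{Q}^{(\kappa_\infty)}$ (torsion-freeness excludes Pr\"ufer summands) and $H$ reduced and pure-injective. By the structure theory of reduced algebraically compact groups (\cite{fuchs}; see also \cite{prest},\cite{ziegler}), $H\cong\prod_p H_p$, where $H_p$ is the torsion-free pure-injective $p$-adic component; each $H_p$ is the pure hull of a free $\mathbb{Z}_{(p)}$-module, so $H_p\cong\overline{\mathbb{Z}_{(p)}^{(\kappa_p)}}$, using $\overline{\mathbb{Z}_{(p)}}=\widehat{\mathbb{Z}_p}$ and the minimality of pure hulls from Fact~\ref{pih}. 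To \emph{compute the ranks}, note that by Remark~\ref{easy-u} the limit model $G$ is universal in $\K^{tf}_\lambda$, hence contains a pure copy of every torsion-free group of size $\le\lambda$. Applying this to $\mathbb{Q}^{(\lambda)}$ forces $\kappa_\infty=\dim_{\mathbb{Q}}D\ge\lambda$, and applying it to $\mathbb{Z}_{(p)}^{(\lambda)}$ forces $\kappa_p=\dim_{\mathbb{F}_p}(G/pG)\ge\lambda$ for every prime $p$; both invariants are bounded by $\|G\|=\lambda$, so $\kappa_\infty=\kappa_p=\lambda$ and \[G\cong\mathbb{Q}^{(\lambda)}\oplus\prod_p\overline{\mathbb{Z}_{(p)}^{(\lambda)}}.\] The cardinalities are consistent because such a limit model can exist only when $\lambda^{\aleph_0}=\lambda$ (the stability regime of Fact~\ref{st-c}), which is exactly what makes the right-hand side have size $\lambda$.

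The main obstacle is this second half. The delicate points are invoking the correct structure theory to be sure the reduced part is \emph{precisely} the product $\prod_p$ of $p$-adic components, with no superdecomposable part and no divisible contribution hidden in the product, and then pinning the multiplicities to $\lambda$ \emph{simultaneously for all primes}. One must carefully separate the universality-driven lower bounds (applied prime-by-prime) from the single cardinality upper bound $\|G\|=\lambda$, and verify $\big\|\prod_p\overline{\mathbb{Z}_{(p)}^{(\lambda)}}\big\|=\lambda$. By contrast, once the cofinality is exploited, the pure-injectivity step is routine.
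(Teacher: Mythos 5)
Your proof is correct and is essentially the intended argument: this paper does not actually prove Fact \ref{purec} (it is quoted from \cite[4.15]{maz}), but your pure-injectivity step is precisely the proof of the paper's Theorem \ref{bigpi} specialized to $R=\mathbb{Z}$, and your second half --- splitting off the divisible part, invoking the Fuchs--Kaplansky structure theory of reduced torsion-free algebraically compact groups, and pinning every invariant to $\lambda$ by playing universality of the limit model (Remark \ref{easy-u}) against the upper bound $\|G\|=\lambda$ --- is the same route taken in the cited source. The points you flag as delicate are indeed the ones requiring citation, but they all hold for abelian groups (no superdecomposable pure-injectives exist, and $\overline{\mathbb{Z}_{(p)}^{(\kappa)}}$ coincides with the $p$-adic completion of $J_p^{(\kappa)}$), so there is no gap.
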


\section{Universal models in classes of $R$-modules}

In this section we will construct universal models for certain
classes of $R$-modules.

\begin{nota}\label{not} Given $R$ a ring, we denote by
$\textbf{Th}_R$ the theory of left $R$-modules. 
Given $T$ a first-order theory \tupop not necessarily complete\tupcp\
extending
the theory of \tupop left\tupcp\ $R$-modules, let $\K^{T}= ( Mod(T),
\leq_{pp})$
and $|T|=|R|+\aleph_0$.

\end{nota}

Our first assertion will be that $\K^T$ is always an abstract
elementary class. In order to prove this, we will use the following
two corollaries of $pp$-quantifier elimination (Fact \ref{ppq}).
Given $n\in\mathbb{N}$ and $\phi, \psi$ $pp$-formulas such that
$\textbf{Th}_R \vdash \psi \to \phi$ we denote by $\Inv(-, \phi, \psi)
\geq n$ the first-order sentence satisfying: $M\models \Inv(-,
\phi, \psi) \geq n$ if and only if $[ \phi(M) : \psi(M)] \geq n$. 
Such a formula is called an \emph{invariant condition}.

\begin{fact}[{\cite[2.15]{prest}}]\label{bool} Every sentence in the
language of $R$-modules is equivalent, modulo the theory of
$R$-modules, to a boolean combination of invariant conditions.
\end{fact}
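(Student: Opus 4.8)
The plan is to combine pp-quantifier elimination at the level of formulas (Fact \ref{ppq}) with a coset-counting argument; the point is that the invariant conditions arise precisely when one existentially quantifies a boolean combination of pp-formulas. First I would record two elementary facts. pp-formulas are closed under conjunction, since one merges the two systems of linear equations and conjoins the two existential blocks; and for a pp-formula $\phi(\bar v)$ the solution set $\phi(M)$ is a subgroup of $M^{|\bar v|}$, because $\bar v = \bar 0$ is always a solution and the solution set is closed under the group operation.

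Next I would reduce an arbitrary sentence $\sigma$ to a canonical shape. Proceeding by structural induction on $\sigma$, and using that the target class (boolean combinations of invariant conditions) is itself closed under the boolean connectives, it suffices to treat a single existential quantifier applied to a matrix that, by Fact \ref{ppq}, is already a boolean combination of pp-formulas. Putting this matrix into disjunctive normal form and distributing $\exists$ over $\vee$, I reduce to sentences of the form
\[
\exists \bar v \Big( \phi(\bar v) \wedge \bigwedge_{j=1}^{k} \neg \psi_j(\bar v) \Big),
\]
where $\phi$ and each $\psi_j$ are pp-formulas; conjoining $\phi$ into each $\psi_j$ I may assume $\textbf{Th}_R \vdash \psi_j \to \phi$, so that each $\psi_j(M)$ is a subgroup of $\phi(M)$. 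This sentence holds in $M$ exactly when $\phi(M) \neq \bigcup_{j=1}^{k} \psi_j(M)$, that is, when the subgroup $\phi(M)$ is not covered by the finitely many subgroups $\psi_j(M)$.

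The decisive step is to express this non-covering condition as a boolean combination of invariant conditions. Here I would invoke B.\ H.\ Neumann's lemma: if $\phi(M)$ is covered by finitely many cosets of the $\psi_j(M)$, then it is already covered using only those $\psi_j(M)$ of finite index in $\phi(M)$, so the subgroups of infinite index may be discarded. Among the finite-index ones, an inclusion--exclusion count of cosets shows that whether $\bigcup_{j} \psi_j(M) = \phi(M)$ is determined by the finitely many indices $[\phi(M):\psi_j(M)]$ together with the indices $[\phi(M): \bigcap_{j \in S}\psi_j(M)]$ for $S \subseteq \{1,\dots,k\}$. Each such intersection $\bigcap_{j\in S}\psi_j$ is again pp-definable and still implies $\phi$, so each corresponding index inequality is exactly a sentence of the form $\Inv(-,\phi,\psi)\geq n$. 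Assembling these finitely many inequalities by the boolean recipe dictated by the inclusion--exclusion count yields a boolean combination of invariant conditions equivalent to $\sigma$.

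The main obstacle will be precisely this last counting step. One must carefully separate the finite-index from the infinite-index subgroups — this is where Neumann's lemma is essential, since without it an infinite covering obstruction could fail to be first-order expressible — and then verify that the covering relation among finitely many finite-index subgroups is a quantifier-free boolean condition on the invariants $[\phi(M):\psi_j(M)]$ and $[\phi(M):\bigcap_{j\in S}\psi_j(M)]$. The bookkeeping that translates the combinatorial covering condition into an explicit boolean combination of the inequalities $\Inv(-,\phi,\psi)\ge n$ is routine but delicate, and is the only place where genuinely new work beyond Fact \ref{ppq} is required.
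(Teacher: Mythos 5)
Your core device --- reducing to $\exists \bar v\,(\phi \wedge \bigwedge_{j}\neg\psi_j)$ and invoking B.~H.~Neumann's lemma plus a coset count --- is indeed the engine of the proof of this fact in the literature (the paper itself gives no proof, quoting \cite[2.15]{prest}), but your bootstrapping from Fact~\ref{ppq} does not work, and this is a genuine gap. Fact~\ref{ppq} is an equivalence modulo $Th(M)$, the \emph{complete} theory of a single module: the boolean combination of pp-formulas it produces is allowed to depend on $M$. Fact~\ref{bool} asserts an equivalence modulo the \emph{incomplete} theory $\textbf{Th}_R$, i.e.\ a single boolean combination valid in all modules simultaneously, and this uniformity is the whole content. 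Indeed, modulo $Th(M)$ every sentence is outright equivalent to a true sentence or a false one, so Fact~\ref{ppq} gives no uniform information at the sentence level: for instance $\exists v\,\neg\exists w\,(2w=v)$ is not equivalent modulo $\textbf{Th}_R$ to any boolean combination of pp-formulas (a closed pp-formula is trivially true, so such a combination reduces to $\top$ or $\bot$), and its uniform equivalent is the invariant condition $\Inv(-,x=x,\exists w\,(2w=x))\geq 2$. What your induction actually requires is the uniform, formula-level Baur--Monk theorem: every \emph{formula} is equivalent modulo $\textbf{Th}_R$ to a boolean combination of pp-formulas \emph{and invariant sentences}. That statement must be proved by induction on formulas (the atomic case is free, since atomic formulas are pp), and Fact~\ref{bool} is then the special case of sentences, the pp-part being trivial. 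Note also that in this induction the Neumann step occurs with free parameters present, so the relevant sets $\phi(\bar a,M)$ and $\psi_j(\bar a,M)$ are cosets (possibly empty) of the parameter-free groups $\phi(\bar 0,M)$ and $\psi_j(\bar 0,M)$, not subgroups; your subgroup-covering argument literally applies only to purely existential sentences $\exists\bar v\,(\text{quantifier-free matrix})$ and cannot by itself handle quantifier alternation, since after eliminating the innermost quantifier the recursion must continue through a mixture of pp-formulas and invariant sentences.

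There is a second, smaller gap in the counting step you yourself flag as the main obstacle. The dichotomy ``$[\phi(M):\psi_j(M)]$ finite versus infinite'' is not first-order, so discarding the infinite-index subgroups cannot itself be phrased via invariant conditions. What saves the argument is the \emph{quantitative} form of Neumann's lemma: if a group is covered by $k$ cosets, it is covered by those among them whose subgroups have index at most $k$. With this bound, the case division runs over the first-order conditions $\neg\bigl(\Inv(-,\phi,\psi_j)\geq k+1\bigr)$, and within each case your inclusion--exclusion count over the indices $[\phi(M):\bigcap_{j\in S}\psi_j(M)]$ (all now bounded in terms of $k$) is a boolean combination of conditions $\Inv(-,\phi,\bigwedge_{j\in S}\psi_j)\geq n$. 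With both repairs made, your outline does become the standard proof of the fact the paper cites.
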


\begin{fact}[{\cite[2.23(a)(b)]{prest}}]\label{ineq} Let $M$, $N$
    be 
$R$-modules and $\phi, \psi$ $pp$-formulas such that $\textbf{Th}_R
\vdash \psi \to \phi$.
\begin{enumerate}
\item If $M \leq_{pp} N$, then $\Inv(M,\phi, \psi) \leq \Inv(N, \phi,
\psi)$.
\item $\Inv(M \oplus N, \phi, \psi)= \Inv(M,\phi, \psi) \Inv(N, \phi,
\psi)$.
\end{enumerate}

\end{fact}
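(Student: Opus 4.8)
The plan is to work directly with the solution sets of the $pp$-formulas. Recall that for a $pp$-formula $\theta(\bar v)$ in $n$ free variables and a module $M$, the set $\theta(M)=\{\bar a \in M^n : M \models \theta[\bar a]\}$ is a subgroup of $(M,+)^n$: writing $\theta(\bar v)=\exists \bar w\,(A\bar v + B\bar w = \bar 0)$ for matrices $A,B$ over $R$, the set $\theta(M)$ is the projection to the first $n$ coordinates of the kernel of the $R$-linear map $(\bar v,\bar w)\mapsto A\bar v + B\bar w$, and is therefore closed under $+$ and $-$. Since $\textbf{Th}_R \vdash \psi \to \phi$ we have $\psi(M)\subseteq \phi(M)$, so $[\phi(M):\psi(M)]$ is a well-defined index in $\mathbb{N}\cup\{\infty\}$, and by definition $\Inv(M,\phi,\psi)$ records exactly this value.

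For (1), I would first use purity to locate the solution sets of $M$ inside those of $N$. If $M \leq_{pp} N$ then for any $pp$-formula $\theta$ and $\bar a \in M^n$ we have $M \models \theta[\bar a]$ iff $N \models \theta[\bar a]$ (this is the content of $pp(\bar a/\emptyset, M)=pp(\bar a/\emptyset, N)$), so $\theta(M)=\theta(N)\cap M^n$. Applying this to both $\phi$ and $\psi$, the inclusion $M \hookrightarrow N$ induces a group homomorphism $\phi(M)/\psi(M) \to \phi(N)/\psi(N)$, and this map is injective: if $\bar a \in \phi(M)$ lands in $\psi(N)$, then $\bar a \in \psi(N)\cap M^n = \psi(M)$. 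An injection of the coset sets gives $[\phi(M):\psi(M)] \le [\phi(N):\psi(N)]$, which is exactly $\Inv(M,\phi,\psi)\le \Inv(N,\phi,\psi)$.

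For (2), the key point is that $pp$-formulas commute with direct sums at the level of solution sets. Identifying $(M\oplus N)^n$ with $M^n \oplus N^n$, I would check that $\theta(M\oplus N)=\theta(M)\oplus \theta(N)$ for every $pp$-formula $\theta$: since the coordinatewise action of $R$ gives $A(\bar a,\bar b)+B(\bar c,\bar d)=(A\bar a+B\bar c,\, A\bar b+B\bar d)$, a tuple $(\bar a,\bar b)$ satisfies the defining equation with some witness $(\bar c,\bar d)$ precisely when $\bar a\in\theta(M)$ and $\bar b\in\theta(N)$ independently. Applying this to $\phi$ and to $\psi$ and then using the elementary fact that $[A_1\oplus A_2 : B_1\oplus B_2]=[A_1:B_1]\cdot[A_2:B_2]$ for subgroups $B_i \le A_i$ yields $\Inv(M\oplus N,\phi,\psi)=\Inv(M,\phi,\psi)\,\Inv(N,\phi,\psi)$.

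I expect the only real subtlety to be in (1): the direction of the inequality is forced by purity, and it is precisely the equality $\psi(M)=\psi(N)\cap M^n$ (not merely the analogous statement for $\phi$) that makes the induced map on quotients injective. Everything in (2) is routine but careful bookkeeping of the coordinatewise decomposition of solution sets over a direct sum.
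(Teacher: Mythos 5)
Your proof is correct. The paper offers no proof of this statement at all — it is imported as a Fact with the citation \cite[2.23(a)(b)]{prest} — and your argument is exactly the standard one behind that reference: $pp$-definable sets are subgroups, purity gives $\psi(M)=\psi(N)\cap M^n$ and hence injectivity of the induced map $\phi(M)/\psi(M)\to\phi(N)/\psi(N)$, and $pp$-formulas commute with direct sums so the indices multiply.
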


\begin{lemma}
If $T$ is a first-order theory extending the theory of $R$-modules,
then $\K^T$ is an abstract elementary class with
$\LS(\K^T)=|T|$.
\end{lemma}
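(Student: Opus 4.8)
The plan is to verify each of the seven axioms in Definition \ref{aec-def} for $\K^T=(Mod(T),\leq_{pp})$. The underlying partial order is $\leq_{pp}$, purity of submodules, and the language is $L_R$, so $|\tau(\K^T)|=|R|+\aleph_0=|T|$. Several axioms are essentially immediate from the definition of purity: reflexivity, transitivity and antisymmetry of $\leq_{pp}$ give that it is a partial order; isomorphism-invariance (3) holds because purity is preserved under isomorphism of ambient modules; and (4) holds because $M\leq_{pp}N$ entails $M\subseteq N$ by definition. The first genuinely content-bearing step is to verify that $Mod(T)$ is closed under the relevant constructions, namely that unions of $\leq_{pp}$-increasing chains of models of $T$ are again models of $T$ and that purity is preserved in the limit.

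The main engine for the interesting axioms will be $pp$-quantifier elimination (Fact \ref{ppq}) together with the invariant-condition machinery (Facts \ref{bool} and \ref{ineq}). For the Tarski--Vaught axiom (6), I would first argue that if $\{M_i : i<\delta\}$ is $\leq_{pp}$-increasing then $M_\delta=\bigcup_{i<\delta}M_i$ is a module, that each $M_i\leq_{pp}M_\delta$, and that $M_\delta\models T$. Purity in the limit is the routine direction: any $pp$-formula is existential and positive, and witnesses for $pp(\bar a/\emptyset,M_\delta)$ already live in some $M_i$, so $pp$-types are preserved along the chain into $M_\delta$; this gives both $M_i\leq_{pp}M_\delta$ and, since each $M_i\models T$, that $M_\delta$ satisfies every invariant condition forced by $T$, hence $M_\delta\models T$ by Fact \ref{bool}. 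Smoothness (6b) follows the same way: if $M_i\leq_{pp}N$ for all $i$, then $pp$-types of tuples from $M_\delta$ computed in $M_\delta$ and in $N$ agree because they agree in some $M_i$, giving $M_\delta\leq_{pp}N$.

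For coherence (5), the key observation is that purity is ``local'' in the $pp$-type sense: given $M_0\leq_{pp}M_2$, $M_1\leq_{pp}M_2$ and $|M_0|\subseteq|M_1|$, one checks that for $\bar a\in M_0^{<\omega}$ the types satisfy $pp(\bar a/\emptyset,M_0)=pp(\bar a/\emptyset,M_2)=pp(\bar a/\emptyset,M_1)$, the first equality from $M_0\leq_{pp}M_2$ and the second because $M_1\leq_{pp}M_2$ computes the same $pp$-type for $\bar a\in M_1$; hence $M_0\leq_{pp}M_1$. Finally, for the L\"owenheim--Skolem--Tarski axiom (7) I would show $\LS(\K^T)=|T|$ by a closure argument: given $M\models T$ and $A\subseteq|M|$, I build a pure submodule containing $A$ of size $|A|+|T|$. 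The standard technique is to close $A$ under solutions to $pp$-formulas in a witnessing fashion, i.e. iteratively add, for each $pp$-formula $\phi(\bar v,\bar w)$ and each tuple $\bar a$ already present, a realization in $M$ of $\phi$ whenever one exists, ensuring the resulting substructure computes the same $pp$-types as $M$ and therefore embeds purely; since there are only $|T|$ many $pp$-formulas, the closure adds at most $|A|+|T|$ elements.

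The step I expect to be the main obstacle is the LST axiom, specifically producing a \emph{pure} submodule rather than merely an elementary or algebraic one, while keeping the cardinality at $|A|+|T|$. The subtlety is that purity requires agreement of full $pp$-types over $\emptyset$, so the downward closure must be arranged so that every $pp$-formula satisfiable over the closed set in $M$ is witnessed inside the closure; this is where $pp$-quantifier elimination is indispensable, since it guarantees that controlling $pp$-formulas suffices to control all first-order formulas and hence membership in $Mod(T)$. The bookkeeping is a standard $\omega$-length iteration, but one must verify both that the closure is a submodule (closed under $+$, $-$, and scalar multiplication by each $r\in R$) and that it is pure, and that it models $T$ via Fact \ref{bool}; the cardinality bound then follows because each stage adds at most $(|A|+|T|)$ elements and there are countably many stages.
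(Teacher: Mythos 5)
Your verification of the easy axioms, of smoothness, and of coherence is fine, but the two steps you flag as content-bearing both rest on a false principle, namely that purity controls satisfaction of $T$. The real content of the lemma is the Tarski--Vaught axiom, which you dismiss as "the routine direction": the inference ``$pp$-types are preserved along the chain, each $M_i\models T$, hence $M_\delta$ satisfies every invariant condition forced by $T$'' is precisely what requires proof, and it does not follow from purity. By Fact \ref{ineq}.(1), invariants can only \emph{increase} along pure extensions, so invariant conditions of the form $\Inv(-,\phi,\psi)\geq k$ do pass from the $M_i$ to $M_\delta$; but an incomplete $T$ may also force conditions of the form $\Inv(-,\phi,\psi)<k$, and these are \emph{not} preserved under pure extension. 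For instance, if $T=\textbf{Th}_{\mathbb{Z}}\cup\{\Inv(-,x=x,2x=0)<3\}$, then $\mathbb{Z}/4\models T$ and $\mathbb{Z}/4\leq_{pp}(\mathbb{Z}/4)^{2}$, yet $(\mathbb{Z}/4)^{2}\not\models T$. The paper's proof is devoted exactly to this point: after writing $Mod(T)=Mod(\textbf{Th}_R\cup\{\theta_\beta:\beta<\alpha\})$ with each $\theta_\beta$ a finite disjunction of invariant conditions (Fact \ref{bool}), it handles the $\geq k$ disjuncts by monotonicity, and the $<k$ disjuncts by a separate argument: since $\delta$ is a limit ordinal and the disjunction is finite, some single disjunct $\Inv(-,\phi,\psi)<k$ holds on a cofinal subchain, and a failure of $\Inv(M_\delta,\phi,\psi)<k$ would be witnessed by finitely many elements of $M_\delta$, hence by elements of some $M_i$, a contradiction. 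Your proposal contains no substitute for either the cofinal-disjunct step or the finite-witness step.

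The same confusion undermines your L\"{o}wenheim--Skolem--Tarski argument, which you wrongly identify as the main obstacle. Closing $A$ under $pp$-witnesses does yield a pure submodule $N\leq_{pp}M$ of cardinality $|A|+|T|$, but purity does not give $N\models T$: reading Fact \ref{ineq}.(1) downward, the invariants of $N$ may be strictly smaller than those of $M$, so $N$ can violate conditions $\Inv(-,\phi,\psi)\geq k$ imposed by $T$ (with $T''=\textbf{Th}_{\mathbb{Z}}\cup\{\Inv(-,x=x,2x=0)\geq 3\}$ one has $(\mathbb{Z}/4)^{2}\models T''$ while its pure submodule $\mathbb{Z}/4$ does not). $pp$-quantifier elimination does not say that controlling $pp$-formulas under purity controls all first-order formulas; boolean combinations involve negations, which pure embeddings do not preserve downward. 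The correct and much simpler route, which is why the paper can treat this axiom as easy, is to apply the classical first-order downward L\"{o}wenheim--Skolem theorem to get an elementary submodel $M_0\preceq M$ with $A\subseteq |M_0|$ and $\|M_0\|\leq |A|+|T|$: elementary submodels are pure and satisfy $T$, giving $\LS(\K^T)=|T|$ at once.
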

\begin{proof}
It is easy to check that $\K^T$ satisfies all the axioms of an AEC
except possibly the Tarski-Vaught axiom.
 Moreover if $\delta$ is a limit ordinal,
$\{ M_i \in \K^T : i < \delta \}$ is an increasing chain (with
respect to $\leq_{pp}$) and $N \in \K^T$ such that $\forall i <
\delta( M_i \leq_{pp} N)$, then $\forall i < \delta( M_i \leq_{pp}
M_\delta = \bigcup_{i < \delta} M_i \leq_{pp} N)$. Therefore, we only
need to show that if $\delta$ is a limit ordinal and $\{ M_i \in \K^T
: i < \delta \}$ is an increasing chain, then $M_\delta$ is a model of $T$.

First, by Fact \ref{bool}, every $\sigma \in T$ is
equivalent modulo $\textbf{Th}_R$ to a boolean combination of
invariant conditions. By putting that boolean combination in
conjunctive
normal form and separating the conjuncts we conclude that:
\[Mod(T)=Mod(\textbf{Th}_R \cup \{ \theta_{\beta} : \beta < \alpha\}), \]
where $\alpha \leq |T|$ and each \( \theta_{\beta}  \) is a finite 
disjunction of invariants statements of the form
\(  \Inv(- , \phi,\psi) \geq k \) or of the form
\( \Inv(- , \phi,\psi) < k \) (for some \emph{pp}-formulas
\( \phi,\ \psi \) such that
\( \textbf{Th}_R \vdash \psi \to \phi \) and some positive integer 
\( k \)).

Let $\delta$ be a limit ordinal and $\{ M_i \in \K^T : i < \delta \}$ an
increasing chain. It is clear that $M_\delta \models \textbf{Th}_R$ 
and that \( M_{i}\leq_{pp} M_\delta \) for all \( i<\delta \)\,.
Take $\beta < \alpha$ and consider \( \theta_{\beta} \)\,. There 
are two cases:

\underline{Case 1:} Some disjunct of  \( \theta_{\beta} \) is of the 
form \(  \Inv(- , \phi,\psi) \geq k \) and for some \( i<\delta \)\,,
\( M_{i}\models \Inv(- , \phi,\psi) \geq k \)\,. Since
\( M_{i}\leq_{pp} M_\delta \)\,, by Fact \ref{ineq}.(1) it follows 
that $\Inv(M_i , \phi, \psi) \leq  \Inv(M_\delta, \phi, \psi)$\,, 
and so $M_\delta \models\theta_\beta$. 

\underline{Case 2:} Every disjunct of \( \theta_{\beta} \) satisfied 
by a \( M_{i} \), for \( i<\delta \)\,,
is of the form \( \Inv(- , \phi,\psi) < k \) (for some
\( \phi,\ \psi, \mbox{ and }k \)). Since \( \delta \) is a limit 
ordinal and \( \theta_{\beta} \) is a finite disjunction, there is some 
cofinal subchain \( \{M_{i'}\} \) of
$\{ M_i  : i < \delta \}$\,,  such that each \( M_{i'} \) satisfies the 
same disjunct of \( \theta_{\beta} \)\,. So without loss of 
generality we can assume that this is true of the entire chain, i.e, there 
are \( \phi,\ \psi, \mbox{ and }k \) such that 
\( M_{i}\models \Inv(- , \phi,\psi) < k  \) for all \( i<\delta \)\, and $ \Inv(- , \phi,\psi) < k$ is a disjunct of $\theta_\beta$.
A counterexample to \( \Inv(M_{\delta}, \phi,\psi) < k \) would be 
witnessed by finitely many tuples from \( M_{\delta} \)\,, hence by 
finitely many tuples from \( M_{i} \)\, for some \( i<\delta \)\,, a 
contradiction. Therefore, $M_\delta \models \theta_\beta$.

\end{proof}

\begin{remark}
If $T$ has an infinite model, then $\K^T$ has no maximal
models. An infinite model \( M \) of \( T \) has arbitrarily
large 
elementary extensions, which are, \emph{ipso facto},  models of 
\( T \) and pure extensions of \( M \).
\end{remark}

The reader might wonder if $\K^T$ satisfies any other of the
structural properties of an AEC such as joint embedding or
amalgamation. We show that if $\K^T$ is closed under
direct sums, then $\K^T$ has both of these properties. This will be done in three steps.

\begin{fact}[{\cite[Exercise 1, \S 2.6]{prest}}]\label{weakap} Let $M, N_1, N_2 \in \K^T$.
If $M \leq_{pp} N_1$ and $M \preceq N_2$, then there are $N \in \K^T$ and
$f: N_1 \xrightarrow[M]{} N$ with $f$ elementary embedding and $N_2 \leq_{pp} N$.
\end{fact}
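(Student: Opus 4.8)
The plan is to realize $N$ by a compactness argument that simultaneously builds an elementary copy of $N_1$ and a pure copy of $N_2$ glued along $M$. First I would introduce new constants $\{c_a : a \in N_1\}$ and $\{d_b : b \in N_2\}$, identifying $c_m$ with $d_m$ for every $m \in M$. Let $\Sigma_1$ be the elementary diagram of $N_1$ in the constants $c_a$, and let $\Sigma_2$ be the \emph{pp-diagram} of $N_2$, namely all sentences $\phi(\bar d_{\bar b})$ with $\phi$ a pp-formula and $N_2 \models \phi(\bar b)$, together with all $\neg\phi(\bar d_{\bar b})$ with $\phi$ pp and $N_2 \not\models \phi(\bar b)$. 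Any model $N^+$ of $\Sigma_1 \cup \Sigma_2$ yields, after taking the $L_R$-reduct, a module $N$ with $N \equiv N_1$ (so $N \models T$), an elementary embedding $f : N_1 \to N$ via $a \mapsto c_a^{N^+}$, and a pure embedding $g : N_2 \to N$ via $b \mapsto d_b^{N^+}$; the identification $c_m = d_m$ forces $f\rest_M = g\rest_M$, and a routine renaming makes $f$ fix $M$ and makes $g[N_2]$ a genuine pure submodule, giving $N_2 \leq_{pp} N$.

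It therefore suffices, by compactness, to show $\Sigma_1 \cup \Sigma_2$ is finitely satisfiable. A finite fragment consists of finitely many sentences of $\Sigma_1$, which hold in $N_1$ under $c_a \mapsto a$, together with finitely many sentences of $\Sigma_2$; the latter are captured by a single formula $\theta(\bar x, \bar m)$, a conjunction of pp-formulas and negations of pp-formulas with parameters $\bar m \in M$, such that $N_2 \models \theta(\bar b, \bar m)$ for the relevant tuple $\bar b \in N_2$. Since the two constant-families overlap only on $M$, it is enough to realize $\exists \bar x\, \theta(\bar x, \bar m)$ in $N_1$ with witnesses that may be taken inside $M$; interpreting $c_a \mapsto a$ and the $d_b$ as these witnesses then satisfies the entire fragment in $N_1$.

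The crux is this transfer, and it is exactly where both hypotheses enter. Because $N_2 \models \exists \bar x\, \theta(\bar x, \bar m)$ and $\bar m \in M \preceq N_2$, the first-order sentence $\exists \bar x\, \theta(\bar x, \bar m)$ descends to $M$, so there is $\bar b' \in M$ with $M \models \theta(\bar b', \bar m)$. Now I view $\bar b', \bar m$ inside $M \leq_{pp} N_1$. Each pp-conjunct of $\theta$ is preserved upward from $M$ to $N_1$ since pp-formulas are existential-positive; each negated pp-conjunct $\neg\psi$ is reflected upward because purity gives $M \models \psi(\bar b', \bar m)$ iff $N_1 \models \psi(\bar b', \bar m)$, and $\psi$ fails in $M$. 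Hence $N_1 \models \theta(\bar b', \bar m)$, so $\exists \bar x\, \theta(\bar x, \bar m)$ holds in $N_1$, as needed.

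I expect the negated pp-conjuncts to be the only real obstacle: preservation of pp-formulas is automatic, but reflecting their negations is precisely the content of $M \leq_{pp} N_1$, while descending the existential to $M$ is precisely the content of $M \preceq N_2$. Everything else — compactness, passing to the reduct to obtain a model of $T$, and the bookkeeping that turns the two embeddings into an identity on $M$ and a literal pure submodule — is routine.
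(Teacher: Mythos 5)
Your proposal is correct and takes essentially the same approach as the paper: both form $\Sigma$ as the elementary diagram of $N_1$ together with the positive and negative pp-diagram of $N_2$, with constants identified along $M$, and then check finite satisfiability in (an expansion of) $N_1$ by compactness. The only difference is one of detail: you explicitly carry out the finite-satisfiability verification (descend the witnesses to $M$ using $M \preceq N_2$, then push the pp-conjuncts and their negations up to $N_1$ using $M \leq_{pp} N_1$), a step the paper's proof sketch dismisses as ``straightforward to verify.''
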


\begin{proof}[Proof sketch]
Introduce new distinct constant symbols for the elements of \( N_{1} \) and 
 \( N_{2} \)\,, agreeing on their common part 
\( M \)\,. Let \( \Delta(N_{1}) \) be the (complete) elementary diagram of 
\( N_{1} \)\,, let \( p^{+}(N_{2})=\{\phi(\overline{a}) : \phi\text{ is a 
\emph{pp}-formula, } \bar{a} \in N_2^{<\omega} \text{ and }N_{2}\models\phi[\overline{a}]\} \)\,, and let
\( p^{-}(N_{2})=\{\neg\phi(\overline{a}) : \phi\text{ is a 
\emph{pp}-formula, } \bar{a} \in N_2^{<\omega} \text{ and }N_{2}\models\neg\phi[\overline{a}]\} \)\,. Then it is straightforward to verify that 
\[
\Sigma= \Delta(N_{1})\cup p^{+}(N_{2})\cup p^{-}(N_{2}) 
\]
is finitely satisfiable in \( N_{1 } \) and any model \( N \) of 
\( \Sigma \) has the desired properties. \end{proof}

\begin{prop}\label{pihap}
If $\K^T$ is closed under direct sums, then pure-injective modules are amalgamation bases\footnote{Recall that $N \in \K$ is an \emph{amalgamation base}, if given $N \lea N_1, N_2 \in \K$, there are $L \in \K$ and $f: N_2 \xrightarrow[M]{} L$ such that $N_1 \lea L$.} .
\end{prop}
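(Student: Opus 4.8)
The plan is to build the amalgam in two stages: first form the naive amalgamated sum of $N_1$ and $N_2$ over $N$, and then repair it into a genuine model of $T$ by passing to an infinite direct power, where closure under direct sums does the essential work.

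First I would use pure-injectivity to split off $N$. Since $N$ is pure-injective and $N \leq_{pp} N_2$, the definition of pure-injectivity gives $N_2 = N \oplus C_2$ for some submodule $C_2$. Set $P := N_1 \oplus C_2$. I would then record the two embeddings exhibiting $P$ as an amalgam over $N$: the summand inclusion $N_1 \hookrightarrow P$, and the map $N_2 = N \oplus C_2 \to P = N_1 \oplus C_2$ induced by the inclusion $N \hookrightarrow N_1$ on the first coordinate and the identity on $C_2$. Both fix $N$ pointwise and agree there; the first is pure as a direct summand, and the second is pure because $N \leq_{pp} N_1$ forces $N \oplus C_2 \leq_{pp} N_1 \oplus C_2$. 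The obstacle is that $P$ itself need not lie in $\K^T$: an invariant $\Inv(P,\phi,\psi)$ can land in a ``gap'' of the set of values cut out by $T$, so that $P \not\models T$ even though $N_1,N_2 \models T$. (Concretely, over $\mathbb{Z}$ one can have $N=\mathbb{Z}/2$, $N_1=\mathbb{Z}/6$, $N_2=\mathbb{Z}/10$, $P=\mathbb{Z}/30$, with $T$ closed under direct sums but $P \not\models T$.)

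To repair this I would pass to $L := P^{(\omega)}$, the countably infinite direct power, together with the composed embeddings $N_1, N_2 \hookrightarrow P \hookrightarrow L$ into the first summand, which remain pure, still fix $N$, and still agree on $N$. It then remains to prove $L \in \K^T$. Since $P \oplus N \cong N_1 \oplus N_2$, we have $L \oplus N^{(\omega)} \cong (N_1 \oplus N_2)^{(\omega)}$, and the right-hand side lies in $\K^T$ by closure under direct sums. The key point, verified using Fact \ref{ineq}, is that $L = P^{(\omega)}$ and $(N_1 \oplus N_2)^{(\omega)} = P^{(\omega)} \oplus N^{(\omega)}$ have exactly the same Baur--Monk invariants: for every pp-pair $\phi/\psi$ the index $\Inv(P^{(\omega)},\phi,\psi)$ equals $1$ when $\Inv(P,\phi,\psi)=1$ and is infinite otherwise, and the same dichotomy holds for $P^{(\omega)} \oplus N^{(\omega)}$. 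These two dichotomies coincide precisely because $N \leq_{pp} N_1 \leq_{pp} P$ gives $\Inv(N,\phi,\psi) \leq \Inv(P,\phi,\psi)$ by Fact \ref{ineq}, so the case $\Inv(N,\phi,\psi)\geq 2$ while $\Inv(P,\phi,\psi)=1$ cannot occur.

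With all invariants equal, Fact \ref{bool} (Baur--Monk) yields $L \equiv (N_1 \oplus N_2)^{(\omega)}$; since the latter is a model of $T$, so is $L$, whence $L \in \K^T$. Identifying $N_1$ with its image gives $N_1 \leq_{pp} L$ together with a pure embedding of $N_2$ into $L$ fixing $N$, which is exactly what is required, so $N$ is an amalgamation base. The main obstacle is the middle step: the naive amalgam $P$ can fail to be a model of $T$, and the whole role of the hypothesis that $\K^T$ is closed under direct sums is to let us absorb $P$ into an infinite power whose invariants are pushed out of every gap and forced to agree with those of an honest member of $\K^T$; the remaining assertions about purity and direct summands are routine.
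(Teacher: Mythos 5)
Your proof is correct, but it takes a genuinely different route from the paper's. The paper splits \emph{both} extensions: pure-injectivity of $N$ gives $N_1 = N \oplus M_1$ and $N_2 = N \oplus M_2$, and the amalgam is then simply $L = N_1 \oplus N_2$, which lies in $\K^T$ by closure under direct sums; the two maps are the diagonal embeddings $f_1(n,m_1) = (n,m_1,n,0)$ and $f_2(n,m_2) = (n,0,n,m_2)$, which agree on $N$ and are split, hence pure, so no model theory at all is needed. You split only $N_2$, form the pushout $P = N_1 \oplus C_2$, observe (correctly) that $P$ itself may fail to be a model of $T$, and repair this by passing to $P^{(\omega)}$: since $P \oplus N \cong N_1 \oplus N_2$, since $\Inv(N,\phi,\psi) \leq \Inv(P,\phi,\psi)$ by Fact~\ref{ineq}(1), and since invariants of infinite direct powers are either $1$ or infinite, $P^{(\omega)}$ and $(N_1 \oplus N_2)^{(\omega)}$ satisfy the same invariant conditions, hence are elementarily equivalent (Fact~\ref{equiv} is the cleanest citation for this step, though Fact~\ref{bool} also works), so $P^{(\omega)} \in \K^T$. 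All your steps are sound; two small points deserve spelling out in a final write-up. First, the $1$-or-infinite dichotomy for $P^{(\omega)}$ needs Fact~\ref{ineq}(2) for finite powers plus the observation that $P^{(\omega)} = \bigcup_n P^{(n)}$ is a union of a pure chain, so a pp-pair that is closed on $P$ stays closed on $P^{(\omega)}$. Second, to make your $\mathbb{Z}/30$ parenthetical a genuine example one should include $\mathbb{Z}/2$ among the building blocks (so that $N \in \K^T$), e.g.\ take $T$ to be the theory of the class of all direct sums of copies of $\mathbb{Z}/2$, $\mathbb{Z}/6$ and $\mathbb{Z}/10$. As for what each approach buys: the paper's diagonal trick is shorter, purely algebraic, and uses only binary direct sums; yours needs countable direct sums and the Baur--Monk machinery, but it uses the splitting of only one of the two extensions, and it isolates the real obstruction --- the naive pushout can land in a gap of the invariant conditions axiomatizing $T$ --- together with the generally useful device of erasing such gaps by passing to infinite powers.
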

\begin{proof}
Let $N \leq_{pp} N_1, N_{2}$ all
in $\K^T$ with $N$ pure-injective. Since $N$ is pure-injective there
are submodules $M_1, M_2$  of  $N_1, N_2$ respectively,
such that for $l \in \{1,2 \}$ we have that
$N_l = N \oplus M_l$. Let 
$L =  N_{1}\oplus N_{2} =(N \oplus M_1) \oplus (N \oplus M_2)$.
Since $\K^{T}$ is closed under direct sums $L
\in \K^{T}$. Define
$f_1: N_1 \to L$ by $f_{1}(n,m_{1})=(n, m_{1}, n, 0)$ and 
$f_2: N_2 \to L$ by $f(n,m_{2})=(n, 0, n, m_{2})$. 
Clearly $f_1, f_2$ are pure embeddings with
$f_1\rest_{N}= f_2\rest_{N}$. \end{proof}

\begin{lemma}\label{prod-ap}
If $\K^T$ is closed under direct sums, then:
\begin{enumerate}
\item  $\K^T$ has joint embedding.
\item $\K^T$ has amalgamation.
\end{enumerate}
\end{lemma}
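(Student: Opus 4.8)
The plan is to deduce both joint embedding and amalgamation from the results already established for pure-injective modules, using the pure hull (Fact \ref{pih}) as the bridge that reduces the general case to the pure-injective one. The key observation is that for any module $M$ its pure hull $\overline{M}$ is pure-injective, satisfies $M \leq_{pp} \overline{M}$, and moreover $M \preceq \overline{M}$, so in particular $\overline{M} \in \K^T$ whenever $M \in \K^T$ (the elementary extension preserves membership in $Mod(T)$).

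For amalgamation, suppose $N \leq_{pp} N_1, N_2$ in $\K^T$. The strategy is to pass to the pure hull $\overline{N}$, which is a pure-injective member of $\K^T$. Using Fact \ref{weakap} together with $N \leq_{pp} N_l$ and $N \preceq \overline{N}$, I can find, for each $l \in \{1,2\}$, a model $N_l' \in \K^T$ with a pure embedding $\overline{N} \leq_{pp} N_l'$ and an elementary (hence pure) embedding $N_l \xrightarrow[N]{} N_l'$. Now $\overline{N}$ is pure-injective and sits purely inside both $N_1'$ and $N_2'$, so Proposition \ref{pihap} applies: $\overline{N}$ is an amalgamation base, yielding $L \in \K^T$ amalgamating $N_1'$ and $N_2'$ over $\overline{N}$. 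Composing the embeddings gives pure embeddings of $N_1$ and $N_2$ into $L$ that agree on $N$, which is exactly amalgamation.

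For joint embedding, given arbitrary $M, N \in \K^T$ the cleanest route is to invoke closure under direct sums directly: $M \oplus N \in \K^T$, and the canonical inclusions $M \to M \oplus N$ and $N \to M \oplus N$ are pure embeddings (one can see purity of each summand from Fact \ref{ineq}, or simply because a direct summand is always pure). This supplies the common extension $R^* = M \oplus N$ required by the definition. Alternatively, joint embedding can be extracted from amalgamation once one has a common pure-injective base, but the direct-sum argument is the shortest and avoids needing a shared submodule.

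The main obstacle I anticipate is the bookkeeping in the amalgamation step: Fact \ref{weakap} is stated for the asymmetric configuration $M \leq_{pp} N_1$, $M \preceq N_2$, so I must correctly orient the roles so that the pure hull $\overline{N}$ plays the part of the elementarily-embedded factor while each $N_l$ plays the purely-embedded factor, and then verify that the resulting embeddings all restrict to the identity on $N$ and remain pure after composition. Checking that purity is preserved under these compositions, and that the hypothesis of Proposition \ref{pihap} (purity of $\overline{N}$ inside both $N_l'$) is genuinely met, is the delicate part; everything else is routine.
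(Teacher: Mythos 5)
Your proposal is correct and follows essentially the same route as the paper's proof: joint embedding via the pure embeddings of $M$ and $N$ into $M \oplus N$, and amalgamation by applying Fact \ref{weakap} (with the pure hull $\overline{N}$ as the elementarily-embedded factor and each $N_\ell$ as the purely-embedded factor) and then amalgamating over the pure-injective $\overline{N}$ using Proposition \ref{pihap}. You even correctly identified and resolved the one delicate point, namely the orientation of the two embeddings in Fact \ref{weakap}.
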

\begin{proof}
For the joint embedding property observe that given $M, N \in \K^T$,
they embed purely in $M\oplus N$ which is in $\K^T$ by hypothesis.

Regarding the amalgamation property, let $M \leq_{pp} N_1, N_{2}$ all
in $\K^T$.  For $\ell \in \{1,2\}$, $M, N_\ell, \overline{M}$ satisfy
the hypothesis of Fact \ref{weakap}, since $M \preceq \overline{M}$ by
Fact \ref{pih}.(2).  Then for $\ell \in \{1,2\}$, there are
$N_\ell^*\in \K^T$ and $f_\ell: N_\ell \xrightarrow[M]{} N_\ell^*$, with
$f_\ell$ an elementary embedding and $\overline{M} \leq_{pp}
N_\ell^*$.

Since $\overline{M} \leq_{pp} N_1^*, N_2^*$ and $\overline{M}$ is 
pure-injective by Fact \ref{pih}.(1), it follows from Proposition
\ref{pihap} that there are $N \in \K^T$, $g_1: N_1^* \to N$ and 
$g_2:N_2^* \to N$ with 
$g_1\rest_{\overline{M}} = g_2\rest_{\overline{M}}$
and $g_1,g_2$ both $\K^T$-embeddings. Finally, observe that $g_1 \circ f_1: N_1 \to N$ and $g_2 \circ f_2:
N_2 \to N$ are $\K^T$-embeddings such that $g_1 \circ f_1\rest_M=g_2
\circ f_2\rest_M$. \end{proof}

From the algebraic perspective the natural 
   hypothesis is to assume that $\K^T$ is closed under direct sums. On the other hand, from the model theoretic perspective it is more natural
to assume that $\K^T$ has joint embedding and amalgamation. This is
always the case when $T$ is a complete theory, which is precisely Example \ref{ex}.(2) below. 

Since we just showed that in $\K^T$ closure under direct sums implies joint embedding and amalgamation, we will assume these throughout the paper.  

\begin{hypothesis}\label{hyp}
Let $R$ be a ring and $T$ a first-order theory \tupop not necessarily complete\tupcp\ with an infinite model
extending the theory of $R$-modules such that:
\begin{enumerate}
\item $\K^T$ has joint embedding.
\item $\K^T$ has amalgamation. 
\end{enumerate}
\end{hypothesis}

 Even after this discussion the reader might wonder if there are any
natural classes that satisfy the above hypothesis. We give some examples:

\begin{example}\label{ex}\
\begin{enumerate}
\item $\K^{tf} = (K^{tf}, \leq_{pp})$ where $K^{tf}$ is the class of
torsion-free abelian groups. In this case $T$ 
is a first-order axiomatization of torsion-free abelian groups.
Since torsion-free abelian groups are closed under direct sums, 
by Lemma \ref{prod-ap} $\K^{tf}$ has   joint embedding and
amalgamation.

\item $\K^T= (Mod(T), \leq_{pp})$ where $T$ is a complete theory
extending $\textbf{Th}_R$. This follows from the fact that if $M, N
\models T$, then $M \leq_{pp} N$ if and only if $M \preceq N$ by
pp-quantifier elimination.

\item $\K^{\textbf{Th}_R}= (Mod(\textbf{Th}_R), \leq_{pp})$. It is
clear that $\K^{\textbf{Th}_R}$ is closed under direct sums, 
so by Lemma \ref{prod-ap} $\K^{\textbf{Th}_R}$ has   joint embedding and
amalgamation.

\item $\K= (\chi, \leq_{pp})$ where $\chi$ is a definable category of modules in the sense of \cite[\S 3.4]{prest09}. In this case $T = \{ \forall x (\phi(x) \to \psi(x)) :  \textbf{Th}_R \vdash \psi \to \phi \text{ and } \phi(M)=\psi(M) \text{ for every } M \in \chi\}$ and $\K$ has joint embedding and amalgamation because $\K$ is closed
under direct sums \tupop by {\upshape \cite[3.4.7]{prest09}}\tupcp\
and by Lemma \ref{prod-ap}.

\item  $\K=(\textbf{C}, \leq_{pp})$ where $\textbf{C}$ is a universal
Horn class. In this case $T=T_{\textbf{C}}$ 
\tupop where $T_{\textbf{C}}$
is an axiomatization of $\textbf{C}$ \tupcp\
and $\K$ has joint embedding and amalgamation because $\K$ is closed
under direct sums \tupop by {\upshape \cite[15.8]{prest}}\tupcp\
and by Lemma \ref{prod-ap}.

\item $\K=(\eff_r, \leq_{pp})$ where $r$ is a radical of finite type and $\eff_r$ is the class of $r$-torsion-free modules.
In this case $T$ exists by {\upshape \cite[15.9]{prest}}
and $\K$ has joint embedding and amalgamation because $\K$ is closed
under direct sums \tupop by {\upshape \cite[15.8]{prest}}\tupcp\ and by Lemma \ref{prod-ap}.

\item $\K = (\tte_r, \leq_{pp})$ where $r$ is a left exact radical, $\tte_r$ is the class of $r$-torsion modules
and $\tte_r$ is closed under products. In this case $T$ exists by
{\upshape \cite[15.14]{prest}} and $\K$ has joint embedding and amalgamation by a similar reason to
\tupop 5\tupcp.

\item $\K = (K_{\text{flat}}, \leq_{pp})$ where $K_{\text{flat}}$ is
the class of \tupop left\tupcp\
flat $R$-modules over a right coherent ring. In this case $T$ exists
by  {\upshape \cite[14.18]{prest}} and $\K$ has joint embedding and amalgamation because the 
class of  flat modules is
closed under direct sums and by Lemma \ref{prod-ap}.

\end{enumerate}

\end{example}



The following example shows that Hypothesis \ref{hyp} is not trivial,
i.e., given $T$ a first-order theory with an infinite model extending
the theory of $R$-modules Hypothesis \ref{hyp}  does not necessarily
hold.

\begin{example}

Let $T = \textbf{Th}_{\mathbb{Z}} \cup \{  \Inv(-, x=x, 3x=0) < 6 \}$.

Let \( A \) be an abelian group satisfying \( T \) 
and \( B \) the subgroup of \( A \) defined by \( 3x=0 \)\,. Then
\( |A/B|\in\lbrace 1,2,3,4,5\rbrace \) and so 
\( A/B\cong A_{0} \)\,, where \( A_{0} \) is one of the finite groups
\(\lbrace0\rbrace,\,\mathbb{Z}/2,\,
\mathbb{Z}/2\times \mathbb{Z}/2,\,\mathbb{Z}/4, \,
\mathbb{Z}/5 \)\,, or \( \mathbb{Z}/3 \)\,.

In particular, if \( B=0 \)\,, observe that
the first five  $A_{0}$'s just listed are models of \( T \). On the other hand, if \( B\ne 0 \)\,, then 
\( B\cong (\mathbb{Z}/3)^{(\kappa)}\) for some finite or infinite 
cardinal \( \kappa \)\,, and since 3 is a prime, it has no 
non-trivial extensions by any of the groups \( A_{0} \)\,. There is 
one exceptional case, as \( \mathbb{Z}/9 \) 
is an extension of \( \mathbb{Z}/3 \) by itself. 

Since the invariants 
multiply across direct sums \tupop Fact \ref{ineq}\tupcp, then  
all the models of \( T \) are  \( \mathbb{Z}/9 \) 
or of the form \( A_0 \) or
\( (\mathbb{Z}/3)^{(\kappa)}\oplus A_{0} \)\,, for some choice of
\( A_{0} \) and \(\kappa \) a finite or infinite cardinal. 

Therefore, there are many examples of failures of the joint embedding property: 
amongst them we have that \( \mathbb{Z}/2 \) and \(\mathbb{Z}/5 \) do not have a common
extension to a model of \( T \)\,, and since the zero module is 
pure-injective, this is an example of the failure of amalgamation over 
pure-injectives. Since \( (\mathbb{Z}/3)^{(\aleph_{0})} \) is 
pure-injective, 
\( (\mathbb{Z}/3)^{(\aleph_{0})} \oplus \mathbb{Z}/2\) and
\( (\mathbb{Z}/3)^{(\aleph_{0})} \oplus \mathbb{Z}/5 \) provide an 
infinite example.
\end{example}

It is worth pointing out that there is an easy first-order argument  to find
universal models if one assumes the hypothesis that $\K^T$ is closed
under direct sums.\footnote{This was discovered after we had a proof using the theory of abstract elementary classes \tupop see Lemma \ref{universal1} \tupcp.}

\begin{lemma}
If $\K^T$ is closed under direct sums and $\lambda^{|T|}=\lambda$,
then $\K^T_\lambda$ has a universal model.
\end{lemma}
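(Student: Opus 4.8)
The plan is to build a single model of cardinality $\lambda$ that purely embeds every model of $T$ of size at most $\lambda$, by taking a direct sum of enough small models. First I would count the number of isomorphism types: since $T$ is a theory in the language $L_R$ with $|T| = |R| + \aleph_0$, and we may take $\LS(\K^T) = |T|$, the downward Löwenheim–Skolem–Tarski property guarantees that every $N \in \K^T$ of size $\le \lambda$ has, for each element, a pure submodule of size $\le |T|$ containing it; more usefully, up to isomorphism there are at most $\lambda^{|T|} = \lambda$ many models in $\K^T_{\le |T|}$. Fix a set $\{M_i : i < \lambda\}$ of representatives for the isomorphism classes of models in $\K^T$ of cardinality $\le |T|$, and set $U := \bigoplus_{i < \lambda} M_i^{(\lambda)}$, the direct sum in which each representative appears $\lambda$ times (the exact multiplicity is a routine bookkeeping choice; $\lambda$ copies each is safely enough and keeps $\|U\| \le \lambda \cdot \lambda \cdot |T| = \lambda$). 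By hypothesis $\K^T$ is closed under direct sums, so $U \in \K^T_\lambda$.

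The heart of the argument is then to show $U$ is universal: given any $N \in \K^T_\lambda$, I want a pure embedding $N \to U$. Here the plan is to write $N$ as a directed union of its small pure submodules and embed piece by piece. Concretely, by the Löwenheim–Skolem–Tarski property one writes $N = \bigcup_{j < \lambda} N_j$ as a continuous increasing union with each $N_j \leq_{pp} N$ of cardinality $\le |T|$; each such $N_j$ is isomorphic to some representative $M_{i(j)}$. The naive idea of mapping each $N_j$ into its designated block of $U$ fails to cohere across the union, so instead I would exploit purity via invariants. Because the $M_i^{(\lambda)}$ summands are present with full multiplicity $\lambda$, the invariant $\Inv(U, \phi, \psi)$ is either $1$ or infinite for every pair $\psi \to \phi$ of $pp$-formulas (by Fact \ref{ineq}.(2), an infinite product of a value $\ge 2$ is infinite), and in fact $\Inv(U,\phi,\psi) \ge \Inv(N,\phi,\psi)$ for every such pair since $N$ itself embeds block-wise into the relevant summand. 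A model-of-modules embedding criterion then yields a pure embedding: two models of $T$ with the same theory compare via invariants, and a module purely embeds into another precisely when its $pp$-type data is respected, which the saturation-by-multiplicity of $U$ guarantees.

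The cleaner route, which I would actually carry out, is to separate the two cardinal hypotheses. In the case $\lambda^{|T|} = \lambda$, the counting above is immediate and $U$ as constructed works directly. In the case $\forall \mu < \lambda\,(\mu^{|T|} < \lambda)$, which forces $\lambda$ to be a limit cardinal (indeed strong-limit-like relative to $|T|$), I would instead build $U$ as a directed union $\bigcup_{\mu < \lambda} U_\mu$, where for each infinite $\mu < \lambda$ with $\mu \ge |T|$ one takes $U_\mu$ to be a universal model in $\K^T_\mu$ obtained from the first case (since $\mu^{|T|} < \lambda$ and one can arrange $\mu^{|T|} = \mu$ along a cofinal set, or absorb the excess into the next stage). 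Arranging the $U_\mu$ to form a $\leq_{pp}$-increasing chain and taking the union gives a model of size $\lambda$; any $N \in \K^T_\lambda$ is again a union of $\le |T|$-sized pure pieces, each of which lands inside some $U_\mu$, and these partial embeddings are amalgamated into a single embedding into $U$ using amalgamation and the universality of later stages.

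The main obstacle I anticipate is coherence: making the block-by-block or stage-by-stage partial embeddings of the pure pieces of $N$ agree so as to assemble into one global pure embedding $N \to U$. The invariant computation shows each small piece embeds and that purity is preserved, but gluing requires either the amalgamation property (available by Hypothesis \ref{hyp}, though the lemma is stated only under closure under direct sums, which yields amalgamation by Lemma \ref{prod-ap}) or a direct back-and-forth using $pp$-quantifier elimination. Because $pp$-formulas are preserved under direct unions and the multiplicities in $U$ are uniformly $\lambda$, I expect the invariant-based comparison to give the embedding with only routine care; the delicate point is confirming that the pure-embedding criterion for modules (comparison of $\Inv$ values together with agreement of the ambient theory $T$) is exactly what the construction supplies, and that no finite configuration of $pp$-data in $N$ outruns what $U$ realizes.
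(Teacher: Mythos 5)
Your construction of $U$ as a direct sum of \emph{small} models breaks down at exactly the step you flag as the heart of the argument: the ``model-of-modules embedding criterion'' you invoke does not exist. Domination of invariants (together with being a model of $T$) controls elementary equivalence (Fact \ref{equiv}), not pure embeddability, and the implication you need is false. Concretely, take $\K^{tf}$ (so $|T|=\aleph_0$) and any $\lambda$ with $\lambda^{\aleph_0}=\lambda$; note $\lambda\geq 2^{\aleph_0}$. Your $U$ is then a direct sum of countable torsion-free groups. Consider the $p$-adic integers $\overline{\mathbb{Z}_{(p)}}$ (the pure hull of $\mathbb{Z}_{(p)}$, which is torsion-free, pure-injective, indecomposable, and of cardinality $2^{\aleph_0}$), and let $G=\overline{\mathbb{Z}_{(p)}}\oplus \mathbb{Z}^{(\lambda)}\in\K^{tf}_\lambda$. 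A pure embedding $G\to U$ restricts to a pure embedding $\overline{\mathbb{Z}_{(p)}}\to U$, and since $\overline{\mathbb{Z}_{(p)}}$ is pure-injective its image would be a direct summand of $U$. But by Kaplansky's theorem every direct summand of a direct sum of countably generated modules is itself a direct sum of countably generated modules, while $\overline{\mathbb{Z}_{(p)}}$ is uncountable and indecomposable. So $G$ does not purely embed into $U$, even though, exactly as you compute, every invariant of $U$ is $1$ or infinite and $\Inv(U,\phi,\psi)\geq\Inv(G,\phi,\psi)$ for every pp-pair; all that this buys is $G\oplus U\equiv U$, i.e., $G$ embeds purely into \emph{some} model elementarily equivalent to $U$, not into $U$ itself. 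Your fallback of gluing the partial embeddings of the pieces $N_j$ by amalgamation fares no better: amalgamation only places $N_{j+1}$ into some pure extension $L$ of $U$, and pulling that back into $U$ requires $U$ to be universal over its small pure submodules, which is essentially what is being proved (and is what the Galois-stability route, Lemma \ref{universal1}, supplies). The second cardinal case you discuss is not part of this lemma's hypothesis and in any event rests on the same flawed first case.

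The paper's proof keeps your direct-sum idea but sums the right blocks, and the difference is not cosmetic. Observe that $T$ has at most $2^{|T|}\leq\lambda$ complete extensions; each is $\lambda$-stable by Fact \ref{st-c} (here $\lambda^{|T|}=\lambda$ is used), hence has a \emph{saturated} model of cardinality $\lambda$. Let $U$ be the direct sum of these saturated models, one for each complete extension; closure under direct sums gives $U\in\K^T_\lambda$. Now any $N\in\K^T_\lambda$ embeds elementarily into the saturated model of $Th(N)$ of cardinality $\lambda$ --- this is the standard universality property of saturated models, which is what replaces your nonexistent invariants criterion --- and elementary embeddings are pure, so composing with the summand inclusion gives a pure embedding $N\to U$. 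The Kaplansky obstruction above shows the blocks genuinely must be large (saturated of size $\lambda$): no direct sum of modules of size $\leq|T|$ can be universal in $\K^T_\lambda$ for the paper's own motivating example $\K^{tf}$.
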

\begin{proof}
    Observe that \( T  \) has no more than \( 2^{|T|} \) complete 
    extensions. Each such extension is  $\lambda$-stable, see Fact
    \ref{st-c}, and so has a saturated model of cardinality 
    \( \lambda \)\,. Take the direct sum \( U \) of all of these; it 
    has cardinality \( 2^{|T|}\lambda=\lambda \)\,. We claim that 
    \( U \in \K^T_\lambda \) and is universal in 
    \( \K^T_\lambda \)\,. But \( \K^{T} \) is closed under direct sums, 
    so \( U \in \K^T \); and we have already observed that
    \( \|U\|=\lambda \)\,.
    
    If $N \in \K^T_\lambda$\,, then \( N \) is elementarily embedded 
    in the \( \lambda \)-saturated model 
    of \( \mathrm{Th}(N) \)  which is a summand of \( U \)\,, and 
    hence \( N \) is purely embedded in \( U \)\,. \end{proof}

\subsection{Galois-stability}

The following consequence of $pp$-quantifier elimination will be the
key to the arguments in this subsection:

\begin{fact}[{\cite[2.17]{prest}}]\label{pp=tp}
Let $N \in \K^{T}$, $A \subseteq N$ and $\bar{b}_{1}, \bar{b}_{2} \in
N^{<\omega}$. Then: 
\[ \pp(\bar{b}_{1}/A , N) = \pp(\bar{b}_{2}/A , N) \text{ 
if
and only if } \tp{\bar{b}_{1}}{A}{ N} = \tp{\bar{b}_{2}}{A}{N}.\] 
\end{fact}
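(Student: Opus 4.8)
The plan is to prove both implications separately, with the backward one being immediate and the forward one resting on $pp$-quantifier elimination (Fact \ref{ppq}). For the direction $(\Leftarrow)$, suppose $\tp{\bar{b}_1}{A}{N} = \tp{\bar{b}_2}{A}{N}$. Since every $pp$-formula with parameters from $A$ is in particular a first-order formula with parameters from $A$, the two complete types restrict to the same set of $pp$-formulas; that is, $\pp(\bar{b}_1/A, N) = \pp(\bar{b}_2/A, N)$. No module theory is needed here, and this is the easy half.

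For the substantive direction $(\Rightarrow)$, assume $\pp(\bar{b}_1/A, N) = \pp(\bar{b}_2/A, N)$, and let $\psi(\bar{v}, \bar{a})$ be an arbitrary first-order formula with parameters $\bar{a}$ from $A$. I must show $N \models \psi[\bar{b}_1, \bar{a}]$ if and only if $N \models \psi[\bar{b}_2, \bar{a}]$, since establishing this for all such $\psi$ gives equality of the complete types. The idea is to promote the parameter slots to free variables: writing $\psi(\bar{v}, \bar{w})$ with $\bar{w}$ a tuple of variables matching $\bar{a}$, I apply Fact \ref{ppq} modulo $Th(N)$ to obtain a boolean combination $\bigvee_{i}\bigwedge_{j}(\pm)\phi_{ij}(\bar{v}, \bar{w})$ of $pp$-formulas with $Th(N) \vdash \forall \bar{v}\, \forall \bar{w}\,(\psi \leftrightarrow \bigvee_{i}\bigwedge_{j}(\pm)\phi_{ij})$.

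Now I substitute $\bar{a}$ for $\bar{w}$. Each $\phi_{ij}(\bar{v}, \bar{a})$ is a $pp$-formula with parameters in $A$, so whether $N \models \phi_{ij}[\bar{b}, \bar{a}]$ is precisely the question of whether $\phi_{ij}(\bar{v}, \bar{a}) \in \pp(\bar{b}/A, N)$. By hypothesis these memberships agree for $\bar{b}_1$ and $\bar{b}_2$ across the finitely many $\phi_{ij}$ appearing, so $\bar{b}_1$ and $\bar{b}_2$ satisfy exactly the same positive and negated $pp$-conjuncts; hence they agree on the whole boolean combination, and therefore on $\psi(\bar{v}, \bar{a})$. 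As $\psi$ was arbitrary, the two complete types coincide.

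The only point requiring care is the treatment of parameters: Fact \ref{ppq} is naturally stated for the pure language of $R$-modules, so I must apply it to $\psi(\bar{v}, \bar{w})$ with the parameters promoted to variables and then re-substitute, using that the equivalence it provides holds modulo the complete theory $Th(N)$ and hence under every assignment of $\bar{v}, \bar{w}$ in $N$. This is exactly what legitimizes reading the satisfaction of each $pp$-conjunct off the $pp$-type rather than off some richer structure. With this observation in place the remainder is a routine unwinding of the boolean combination, so I expect no genuine obstacle beyond correctly handling the parameters.
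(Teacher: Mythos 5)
Your proof is correct. The paper itself gives no proof of this statement---it is quoted as a fact from Prest \cite[2.17]{prest}---and your argument (the trivial restriction direction, plus the forward direction obtained by promoting the parameters of an arbitrary formula to variables, applying the Baur--Monk elimination of Fact \ref{ppq} modulo $Th(N)$, and reading off the truth of each $pp$-conjunct from the equality of the $pp$-types) is exactly the standard derivation used in that source, so it matches the intended proof.
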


With this, we are able to show that $pp$-types and Galois-types are the same over models.
\begin{lemma}\label{pp=gtp}
Let $M, N_1, N_2 \in \K^T$,  $M \leq_{pp} N_1, N_2$, $\bar{b}_{1}
\in  N_1^{<\omega}$  and $\bar{b}_{2} \in N_2^{<\omega}$. Then:
 \[ \gtp(\bar{b}_{1}/M; N_1) = \gtp(\bar{b}_{2}/M; N_2) \text{ if
and
only if } \pp(\bar{b}_{1}/M , N_1) = \pp(\bar{b}_{2}/M, N_2).\]
\end{lemma}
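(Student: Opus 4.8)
The plan is to prove both implications by passing between the class-theoretic description of Galois-types and the algebraic description of pp-types, using the single feature that makes $\K^T$ tractable: in $\K^T$ every $\K$-embedding is by definition a pure embedding, and a pure embedding preserves and reflects every pp-formula. Concretely, for a $\K$-embedding $f\colon N'\to N$, a pp-formula $\phi$ and a tuple $\bar c$ from $N'$, one has $N'\models\phi[\bar c]$ iff $N\models\phi[f(\bar c)]$ (using that $f[N']\leq_{pp}N$ together with the definition of pure submodule). Since $\K^T$ has amalgamation, Remark \ref{trans} lets me replace $E^{\K}$ by $E_{\text{at}}^{\K}$, so that $\gtp(\bar b_1/M;N_1)=\gtp(\bar b_2/M;N_2)$ is equivalent to the existence of $N\in\K^T$ and $\K$-embeddings $f_1\colon N_1\xrightarrow[M]{}N$, $f_2\colon N_2\xrightarrow[M]{}N$ with $f_1(\bar b_1)=f_2(\bar b_2)$.

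For the forward direction I would take such $N,f_1,f_2$. Because each $f_\ell$ fixes $M$ pointwise and is pure, applying purity to the tuple $(\bar b_\ell,\bar a)$ for $\bar a\in M$ shows $\pp(\bar b_\ell/M,N_\ell)=\pp(f_\ell(\bar b_\ell)/M,N)$. Since $f_1(\bar b_1)=f_2(\bar b_2)$, the two right-hand sides coincide, and hence $\pp(\bar b_1/M,N_1)=\pp(\bar b_2/M,N_2)$.

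For the converse, assume the pp-types agree. First I would use amalgamation to obtain $N\in\K^T$ together with $\K$-embeddings $g_1\colon N_1\xrightarrow[M]{}N$ and $g_2\colon N_2\xrightarrow[M]{}N$. Purity of the $g_\ell$ and the hypothesis then give $\pp(g_1(\bar b_1)/M,N)=\pp(g_2(\bar b_2)/M,N)$, and Fact \ref{pp=tp} upgrades this to equality of complete first-order types, $\tp{g_1(\bar b_1)}{M}{N}=\tp{g_2(\bar b_2)}{M}{N}$. At this point I would invoke the standard model-theoretic fact that two tuples realizing the same complete type over $M$ become conjugate in a suitable elementary extension: there is $N\preceq N'$ and an automorphism $\sigma$ of $N'$ fixing $M$ pointwise with $\sigma(g_1(\bar b_1))=g_2(\bar b_2)$. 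Crucially $N'$ is again a model of $T$, so $N'\in\K^T$, and $N\preceq N'$ implies $N\leq_{pp}N'$; hence $f_1:=\sigma\circ g_1$ and $f_2:=g_2$, viewed as maps into $N'$, are $\K$-embeddings fixing $M$ with $f_1(\bar b_1)=f_2(\bar b_2)$, which witnesses $\gtp(\bar b_1/M;N_1)=\gtp(\bar b_2/M;N_2)$.

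The main obstacle is this last step of the converse: turning equality of first-order types into an honest $\K$-amalgam realizing $\bar b_1$ and $\bar b_2$ at a single point. The delicate feature is that one cannot in general stay inside the original amalgam $N$ and must pass to an elementary extension $N'$ to produce the conjugating automorphism; this is harmless precisely because $\K^T=Mod(T)$ is closed under elementary extensions and because $\preceq$ refines $\leq_{pp}$. Everything else is a routine transfer of pp-data through pure embeddings.
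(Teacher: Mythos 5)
Your proof is correct and takes essentially the same route as the paper's own: the forward direction transfers pp-formulas through the pure embeddings of an amalgam, and the converse amalgamates over $M$, upgrades pp-type equality to first-order type equality via Fact \ref{pp=tp}, and then conjugates the two tuples by an automorphism of an elementary extension $N^*$, which stays in $\K^T$ by first-order axiomatizability and is a pure extension since $\preceq$ refines $\leq_{pp}$. The only cosmetic difference is that you normalize Galois-type equality via transitivity of $E_{\text{at}}^{\K}$ with two embeddings into a common amalgam, whereas the paper arranges one side to be an inclusion; the content is identical.
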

\begin{proof}
\underline{$\to$:} Suppose $\gtp(\bar{b}_{1}/M; N_1 ) =
\gtp(\bar{b}_{2}/M; N_2)$. Since $\Ka$ has amalgamation, there are
$N \in \Ka$ and $f: N_1 \to N$ a $\K^{T}$-embedding such that
$f\rest_{M}= \id_{M}$, $f(\bar{b}_{1})=\bar{b}_{2}$ and $N_2 \leq_{pp}
N$. Then the result follows from the fact that $\K^{T}$-embeddings
preserve and reflect $pp$-formulas by definition.

\underline{$\leftarrow$:} Suppose $pp(\bar{b}_{1}/M , N_1) =
pp(\bar{b}_{2}/M , N_2)$. Since $M \in \Ka$ and $\K^T$ has amalgamation,
there is $N \in \Ka$ and  $f: N_1 \to N$ a $\K^T$-embedding such that
$f\rest_{M}= \id_{M}$ and $N_2 \leq_{pp} N$. Using that
$\K^{T}$-embeddings preserve $pp$-formulas we have that
$\pp(f(\bar{b}_{1})/M , N)= \pp(\bar{b}_{2}/M , N)$.

Then by Fact \ref{pp=tp} it follows that $\tp{f(\bar{b}_{1})}{M}{ N}=
\tp{\bar{b}_{2}}{M}{ N}$. Let $N^*$ an elementary extension of $N$
such
that there is  $g\in Aut_{M}(N^*)$ with 
$g(f(\bar{b}_{1}))=\bar{b}_{2}$.
Observe that since $\K^{T}$ is first-order axiomatizable $N^* \in
\K^T$. Consider $h:= g \circ
f: N_1 \to N^{*}$.

It is clear that $h(\bar{b}_{1})=\bar{b}_{2}$, 
$h\rest_{M} = \id_{M}$ and
since being an elementary substructure is stronger than being a pure
substructure it follows that $h: N_1 \to N^{*}$ is a
$\K^T$-embedding and $N_2 \leq_{pp} N^{*}$. Therefore,
$\gtp(\bar{b}_{1}/M; N_1) = \gtp(\bar{b}_{2}/M; N_2)$. \end{proof}

The next corollary follows from the preceding lemma since we can witness that two
Galois-types are different by a $pp$-formula.

\begin{cor}\label{tame1} $\K^T$ is $(<\aleph_0)$-tame.
\end{cor}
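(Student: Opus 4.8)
The plan is to derive $(<\aleph_0)$-tameness directly from Lemma \ref{pp=gtp} together with the observation that $pp$-types are determined by finite subsets of the parameter set. Recall that tameness requires: whenever $M \in \K^T$ and $p \neq q \in \gS(M)$, there is a finite $A \subseteq M$ such that $p\rest_A \neq q\rest_A$ (since $(<\aleph_0)$-tame means we may take $|A| < \aleph_0$).

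First I would take two distinct Galois-types $p = \gtp(\bar{b}_1/M; N_1)$ and $q = \gtp(\bar{b}_2/M; N_2)$ in $\gS(M)$, where $M \leq_{pp} N_1, N_2$. By the contrapositive of the $\leftarrow$ direction of Lemma \ref{pp=gtp}, since $p \neq q$ we must have $\pp(\bar{b}_1/M, N_1) \neq \pp(\bar{b}_2/M, N_2)$. This means there is a $pp$-formula $\phi(\bar{v}, \bar{w})$ and a parameter tuple $\bar{a} \in M^{<\omega}$ witnessing the difference: without loss of generality $N_1 \models \phi[\bar{b}_1, \bar{a}]$ while $N_2 \not\models \phi[\bar{b}_2, \bar{a}]$ (or vice versa). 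The key point is that $\bar{a}$ is a \emph{finite} tuple, so the finite set $A = \{a : a \text{ appears in } \bar{a}\} \subseteq M$ already records this discrepancy.

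Next I would verify that this finite $A$ separates the restricted types, i.e. $p\rest_A \neq q\rest_A$. By definition, $p\rest_A = \gtp(\bar{b}_1/A; N_1)$ and $q\rest_A = \gtp(\bar{b}_2/A; N_2)$. Applying the $\to$ direction of Lemma \ref{pp=gtp} in contrapositive form (with $A$ in place of $M$, which is legitimate since $A$ is a subset of a model and the lemma is stated over models, so here I would instead argue directly): if the restricted Galois-types were equal, amalgamation would give a common $\K^T$-embedding fixing $A$ and identifying the images of $\bar{b}_1$ and $\bar{b}_2$, and since $\K^T$-embeddings preserve and reflect $pp$-formulas with parameters in $A$, this would force $\phi(\bar{v}, \bar{a})$ to hold or fail simultaneously for both, contradicting our choice of $\phi$ and $\bar{a}$. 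Hence $p\rest_A \neq q\rest_A$, which establishes tameness.

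The only genuine subtlety, and the step I would be most careful about, is that Lemma \ref{pp=gtp} is stated with the parameter set being a \emph{model} $M$, whereas tameness asks about restrictions to an arbitrary finite \emph{subset} $A$. I would therefore not invoke the lemma verbatim for $A$, but rather reprove its easy direction directly at the level of $A$: the preservation of $pp$-formulas by $\K^T$-embeddings fixing $A$ is exactly what the remark following the lemma statement alludes to, and it holds for any set of parameters, not just models. This makes the separation argument go through cleanly, and the whole corollary is short because all the real work was already done in Lemma \ref{pp=gtp}.
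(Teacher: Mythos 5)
Your proposal is correct and is essentially the paper's argument: the paper dispenses with the proof in one line, noting that by Lemma \ref{pp=gtp} two distinct Galois-types are separated by a $pp$-formula, whose parameter tuple is finite, and you have simply filled in the details. One small precision: equality of Galois-types over the finite set $A$ means the triples are related by the \emph{transitive closure} of $E_{\text{at}}^{\K}$ (amalgamation over arbitrary sets is not available), but your argument survives intact since $pp$-formulas with parameters in $A$ are preserved and reflected along each atomic step of the chain.
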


The next theorem is the main result of this subsection.

\begin{theorem}\label{g-st}
If $\lambda^{|T|}=\lambda$, then $\K^{T}$ is $\lambda$-Galois-stable.
\end{theorem}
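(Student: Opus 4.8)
The plan is to reduce the counting of Galois-types to first-order stability, using the pure hull to bridge the gap between pure and elementary extensions. By Lemma \ref{pp=gtp} the assignment $\gtp(b/M;N)\mapsto \pp(b/M,N)$ identifies $\gS_{\K^T}(M)$ with the set of $pp$-$1$-types over $M$ that are realized in some pure extension of $M$ inside $\K^T$, so it suffices to bound the number of such $pp$-types by $\lambda$. The complete first-order theory $\mathrm{Th}(M)$ extends the theory of $R$-modules and satisfies $|\mathrm{Th}(M)|=|T|$, so by Fact \ref{st-c} it is $\lambda$-stable; hence $|S^{\mathrm{Th}(M)}(M)|\le\lambda$, and by Fact \ref{pp=tp} there are at most $\lambda$ $pp$-types over $M$ realized in elementary extensions of $M$.

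The main difficulty is that a pure extension $M\leq_{pp}N$ need not be elementary, so a $pp$-type $p=\pp(b/M,N)$ need not be realized in any elementary extension of $M$ (indeed $\mathrm{Th}(N)$ may differ from $\mathrm{Th}(M)$), and the naive injection into $S^{\mathrm{Th}(M)}(M)$ is unavailable. To get around this I would pass to the pure hull $\overline{M}$ of Fact \ref{pih}, which is pure-injective and satisfies $M\preceq\overline{M}$ (so $\overline{M}\in\K^T$). Given $p=\pp(b/M,N)$, apply Fact \ref{weakap} to $M\leq_{pp}N$ and $M\preceq\overline{M}$ to obtain $\hat N\in\K^T$, an elementary embedding $g:N\xrightarrow[M]{}\hat N$, and $\overline{M}\leq_{pp}\hat N$; since $g$ is elementary and fixes $M$ we have $\pp(g(b)/M,\hat N)=p$. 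As $\overline{M}$ is pure-injective and purely embedded in $\hat N$, it is a direct summand, say $\hat N=\overline{M}\oplus C$, and I write $g(b)=m+c$ with $m\in\overline{M}$ and $c\in C$.

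The key observation is that $p$ is then recovered from the pair $\bigl(\pp(m/M,\overline{M}),\,\pp(c/\emptyset,C)\bigr)$: since the solution set of a $pp$-formula in a direct sum is the direct sum of the solution sets (with parameters split accordingly) and every parameter $\bar a$ lies in $M\subseteq\overline{M}$, so has $C$-coordinate $0$, for each $pp$-formula $\phi$ one has $\phi(x,\bar a)\in p$ if and only if $\overline{M}\models\phi(m,\bar a)$ and $C\models\phi(c,\bar 0)$. Thus the number of possible $p$ is at most the number of such pairs. The first coordinate $\pp(m/M,\overline{M})$ is a $pp$-type over $M$ realized in the elementary extension $\overline{M}$ of $M$, so by the first paragraph there are at most $\lambda$ possibilities; the second coordinate $\pp(c/\emptyset,C)$ is a $pp$-$1$-type over $\emptyset$, and since there are at most $|T|$ $pp$-formulas in one free variable there are at most $2^{|T|}\le\lambda^{|T|}=\lambda$ such types. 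Hence $|\gS_{\K^T}(M)|\le\lambda\cdot\lambda=\lambda$, giving the desired $\lambda$-Galois-stability.
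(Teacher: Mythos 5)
Your proof is correct, but it takes a genuinely different route from the paper's. The paper handles the mismatch between pure and elementary extensions globally: using amalgamation (and unions of chains) it realizes all of $\gS(M)$ in a single model $N \in \K^T$, and then $\gtp(a/M;N) \mapsto \pp(a/M,N)$ injects $\gS(M)$ into $S^{Th(N)}_{pp}(M)$ --- note $Th(N)$, the theory of the big amalgam, not $Th(M)$ --- which has size at most $\lambda$ by Fact \ref{pp=tp} together with Fact \ref{st-c} applied to the complete theory $Th(N)$. You instead work type by type: via Fact \ref{weakap} and the pure hull you move each realization into a module of the form $\overline{M}\oplus C$, split it as $m+c$, and count the pairs $\bigl(\pp(m/M,\overline{M}),\,\pp(c/\emptyset,C)\bigr)$, bounding the first coordinate by stability of $Th(M)$ itself and the second by $2^{|T|}\le\lambda$. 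What the paper's argument buys is brevity: one amalgamation plus one application of Fact \ref{st-c}, with no need for pure hulls, summand decompositions, or the fact that pp-formulas commute with direct sums (your third paragraph relies on this; it is standard --- it underlies Fact \ref{ineq} --- but is not stated explicitly in the paper, so you should cite it, e.g.\ \cite[2.3]{prest}). What your argument buys is a more local and more informative proof: it avoids constructing a single model realizing possibly $2^\lambda$ types, it confines the use of amalgamation entirely to Lemma \ref{pp=gtp}, and it yields the finer conclusion that every Galois-type over $M$ is determined by a first-order type over $M$ in the sense of $Th(M)$ together with a parameter-free pp-type, i.e.\ $|\gS(M)|\le |S^{Th(M)}(M)|\cdot 2^{|T|}$. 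Both proofs ultimately rest on Lemma \ref{pp=gtp} (hence on amalgamation) and on Fact \ref{st-c}; only minor bookkeeping in your first paragraph deserves a word, namely that counting pp-types realized in \emph{arbitrary} elementary extensions of $M$ by $|S^{Th(M)}(M)|$ implicitly uses that any such type extends to a complete type consistent with the elementary diagram of $M$, which is routine.
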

\begin{proof}
Let $M \in \K^T_\lambda$ and $\{ p_i : i < \alpha \}$ an enumeration without repetitions of $\gS(M)$ where $\alpha \leq 2^{\lambda}$. Since $\K^T$ has amalgamation, there is $N\in \K^T$ and $\{a_i  : i < \alpha\} \subseteq N$ such that $p_i = \gtp(a_i/ M; N)$ for every $i < \alpha$.

Let $\Phi: \gS(M) \to S^{Th(N)}_{pp}(M)$ be defined by
$p_i \mapsto \pp(a_i/M, N)$. 
By Lemma \ref{pp=gtp} $\Phi$ is a well-defined
injective function. By Fact \ref{pp=tp}
$|S^{Th(N)}_{pp}(M)|=|S^{Th(N)}(M)|$. 
Then it follows from Fact
\ref{st-c} that $|S^{Th(N)}(M)|\leq \lambda$, hence
$| \gS(M) |
\leq \lambda$. \end{proof}

\subsection{Universal models}  

It is straightforward to construct universal models in $\K^T$ for $\lambda$'s satisfying that $\lambda^{|T|}=\lambda$. This follows from Fact \ref{uni} and Remark \ref{easy-u}.

\begin{lemma}\label{universal1}
If $\lambda^{|T|}= \lambda$, then $\K^{T}_\lambda$ has a universal
model.
\end{lemma}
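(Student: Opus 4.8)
The plan is to assemble three facts already in place. By Theorem \ref{g-st}, the hypothesis $\lambda^{|T|}=\lambda$ gives that $\K^T$ is $\lambda$-Galois-stable. By Hypothesis \ref{hyp}, $\K^T$ has joint embedding and amalgamation, and by the Remark following the Lemma that $\K^T$ is an AEC, $\K^T$ has no maximal models (since $T$ has an infinite model, so every model admits a proper pure elementary extension). These are exactly the structural hypotheses needed to invoke the existence of universal extensions.

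Concretely, first I would fix any model $P \in \K^T_\lambda$; such a model exists because $T$ has an infinite model, so $\K^T$ has models in every cardinality $\ge |T|$ by L\"owenheim--Skolem--Tarski together with no maximal models. Then I would apply Fact \ref{uni} to $P$: since $\K^T$ has joint embedding, amalgamation, no maximal models, and is $\lambda$-Galois-stable, there is some $M \in \K^T_\lambda$ that is universal over $P$. Finally, I would invoke Remark \ref{easy-u}: because $\K^T$ has joint embedding, a model that is universal over some $N$ is automatically a universal model in $\K^T_{\|M\|} = \K^T_\lambda$. This yields the desired universal model of cardinality $\lambda$.

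The only real content is making sure every hypothesis of Fact \ref{uni} is verified, and the mild point that $\K^T_\lambda$ is nonempty. The no-maximal-models condition is the one most easily overlooked, but it is delivered by the Remark on pure elementary extensions of an infinite model of $T$. I do not expect any genuine obstacle here: the statement is essentially a direct corollary of Theorem \ref{g-st}, Fact \ref{uni}, and Remark \ref{easy-u}, which is precisely why the surrounding text describes it as ``straightforward.'' The proof is therefore a short chain of citations rather than a new argument, and the main thing to be careful about is citing $\lambda$-Galois-stability (from $\lambda^{|T|}=\lambda$) before feeding it into the universal-extension machinery.
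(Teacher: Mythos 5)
Your proposal is correct and matches the paper's own argument exactly: the paper proves this lemma by the same chain of citations, namely Theorem \ref{g-st} (Galois-stability from $\lambda^{|T|}=\lambda$), Fact \ref{uni} (universal extensions under joint embedding, amalgamation, no maximal models, and stability), and Remark \ref{easy-u} (joint embedding upgrades ``universal over some $P$'' to ``universal in $\K^T_\lambda$''). Your extra care about no maximal models and the nonemptiness of $\K^T_\lambda$ is exactly the right bookkeeping and introduces no gap.
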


The following lemma shows how to build universal models in
cardinals where $\K^T$ might not be $\lambda$-Galois-stable.

\begin{lemma}\label{main3}
If $\forall \mu < \lambda( \mu^{|T|} < \lambda)$, then $\K^T_\lambda$
has a universal model.
\end{lemma}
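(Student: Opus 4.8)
The plan is to build the universal model as an increasing continuous union of universal extensions of strictly increasing cardinalities below $\lambda$, in the style of the classical construction of special models, reducing everything to the Galois-stable cardinals already understood by Theorem \ref{g-st} and Lemma \ref{universal1}.

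The first and most important step is a short cardinal-arithmetic observation: the hypothesis $\forall \mu<\lambda(\mu^{|T|}<\lambda)$ guarantees that $C:=\{\nu<\lambda : \nu^{|T|}=\nu\}$ is cofinal in $\lambda$. Indeed, given any $\mu<\lambda$, the cardinal $\nu:=\mu^{|T|}$ satisfies $\mu\le\nu<\lambda$ (by hypothesis) and $\nu^{|T|}=\mu^{|T|\cdot|T|}=\mu^{|T|}=\nu$, so $\nu\in C$ and $\nu\ge\mu$. Using this I fix $\kappa:=\cof(\lambda)$ and an increasing sequence $\langle \lambda_i : i<\kappa\rangle$, cofinal in $\lambda$, with each $\lambda_i\in C$; on each such $\lambda_i$ the class $\K^T$ is $\lambda_i$-Galois-stable by Theorem \ref{g-st}. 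Note also that $\K^T$ has no maximal models, since $T$ has an infinite model, so Fact \ref{uni} is available at every $\lambda_i$.

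Next I would construct an increasing continuous $\leq_{pp}$-chain $\langle M_i : i<\kappa\rangle$ with $\|M_i\|\le\lambda_i$, $\|M_{i+1}\|=\lambda_{i+1}$, with $M_0$ universal in $\K^T_{\lambda_0}$ (which exists by Lemma \ref{universal1}), and, crucially, with $M_{i+1}$ being $\lambda_{i+1}$-universal over $M_i$. To produce $M_{i+1}$: since $\|M_i\|\le\lambda_{i+1}$, first take a pure extension $M_i'\supseteq M_i$ of size $\lambda_{i+1}$ (available as $\K^T$ has no maximal models), then apply Fact \ref{uni} (via $\lambda_{i+1}$-Galois-stability together with joint embedding, amalgamation and no maximal models) to get $M_{i+1}\in\K^T_{\lambda_{i+1}}$ universal over $M_i'$. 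That $M_{i+1}$ is then $\lambda_{i+1}$-universal over $M_i$ follows by amalgamating any $N^*$ with $M_i\leq_{pp} N^*$ and $\|N^*\|\le\lambda_{i+1}$ with $M_i'$ over $M_i$, shrinking the amalgam to size $\lambda_{i+1}$ by the L\"owenheim--Skolem--Tarski axiom (and coherence) so that $M_i'$ remains pure in it, and composing with the universal map into $M_{i+1}$ over $M_i'$; the composite fixes $M_i$. Set $U:=\bigcup_{i<\kappa}M_i$; then $U\in\K^T$ by the Tarski--Vaught axiom and $\|U\|=\sum_{i<\kappa}\lambda_i=\lambda$.

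Finally I would verify that $U$ is universal in $\K^T_\lambda$ by a coherent-embedding argument. Given $N\in\K^T_\lambda$, the L\"owenheim--Skolem--Tarski axiom lets me resolve $N=\bigcup_{i<\kappa}N_i$ as an increasing continuous chain of pure submodels with $\|N_i\|\le\lambda_i$. I then define $\K^T$-embeddings $f_i:N_i\to M_i$ by induction so that $f_{i+1}\rest N_i=f_i$: the base case uses universality of $M_0$; at successors I amalgamate $N_{i+1}$ with $M_i$ over $N_i$ (identifying $N_i$ with $f_i[N_i]\leq_{pp}M_i$), bound the amalgam by $\lambda_{i+1}$ via the L\"owenheim--Skolem--Tarski axiom, and push it into $M_{i+1}$ using $\lambda_{i+1}$-universality over $M_i$, noting the resulting map fixes $M_i\supseteq f_i[N_i]$ and hence extends $f_i$; at limits I take unions, which remain pure embeddings by the Tarski--Vaught and smoothness axioms. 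The union $\bigcup_{i<\kappa} f_i:N\to U$ is then the desired pure embedding, so $U$ is universal. The genuinely clever point is the cardinal-arithmetic lemma making $C$ cofinal; the main obstacle is purely the bookkeeping in the successor step---transferring ``universal over $M_i'$'' to ``$\lambda_{i+1}$-universal over $M_i$'' and keeping the $f_i$ coherent through the amalgamations---rather than any new idea.
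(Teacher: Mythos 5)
Your opening cardinal-arithmetic claim --- that $C=\{\nu<\lambda : \nu^{|T|}=\nu\}$ is cofinal in $\lambda$ --- is false when $\lambda$ is a successor cardinal, and successor cardinals are allowed by the hypothesis of the lemma. For instance $\lambda=(2^{|T|})^+$ satisfies $\forall \mu<\lambda(\mu^{|T|}<\lambda)$, since $\mu\le 2^{|T|}$ implies $\mu^{|T|}\le (2^{|T|})^{|T|}=2^{|T|}<\lambda$; but then every member of $C$ is at most $2^{|T|}$, so $\sup C\le 2^{|T|}<\lambda$, and there is no increasing sequence of members of $C$ cofinal in $\lambda$ (any such sequence has supremum at most $2^{|T|}$). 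Your argument --- given $\mu<\lambda$, the cardinal $\nu=\mu^{|T|}$ lies in $C$ and is at least $\mu$ --- only shows that $C$ is cofinal in the set of \emph{cardinals} below $\lambda$, which yields cofinality in the ordinal $\lambda$ exactly when $\lambda$ is a limit cardinal. So, as written, your construction cannot even begin for successor $\lambda$, and the lemma's statement covers those cases.

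The repair is precisely the first line of the paper's proof, which your write-up is missing: if $\lambda$ is not a limit cardinal, say $\lambda=\theta^+$, then the hypothesis forces $\theta^{|T|}=\theta$, whence $\lambda^{|T|}=(\theta^+)^{|T|}=\theta^{|T|}\cdot\theta^+=\lambda$ by Hausdorff's formula, so Lemma \ref{universal1} already provides a universal model in $\K^T_\lambda$; one then assumes $\lambda$ is a limit cardinal, where your cofinality claim is correct. With that reduction added, the rest of your proof is correct and is essentially the paper's own argument: the same cofinal sequence of cardinals $\lambda_i$ with $\lambda_i^{|T|}=\lambda_i$ (where Theorem \ref{g-st} and Fact \ref{uni} apply), the same successor step producing $M_{i+1}$ universal over a pure extension of $M_i$ of size $\lambda_{i+1}$ and transferring universality back over $M_i$ by amalgamation plus L\"owenheim--Skolem--Tarski, and the same verification of universality of the union by resolving an arbitrary $N\in\K^T_\lambda$ into a continuous chain and building a coherent increasing sequence of embeddings. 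The extra details you supply (the explicit use of coherence when shrinking amalgams, taking $M_0$ universal in $\K^T_{\lambda_0}$ instead of invoking joint embedding at the base step) are fine variations of what the paper leaves as ``straightforward.''
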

\begin{proof} We may assume that $\lambda$ is a limit cardinal, because if it is not the case then we have that $\lambda^{|T|}=\lambda$ and we can apply Lemma \ref{universal1}.
Let $\cof(\lambda)= \kappa \leq \lambda$. By using the hypothesis that
$\forall \mu < \lambda( \mu^{|T|} < \lambda)$, it is easy to build
$\{ \lambda_i : i <\kappa \}$ an increasing continuous sequence of
cardinals such that $ \forall i (\lambda_{i + 1}^{|T|} = \lambda_{i+1})$ and $sup_{i < \kappa} \lambda_i = \lambda$.

We build $\{ M_i : i < \kappa \}$ an increasing continuous chain such that:
\begin{enumerate}
\item $M_{i +1}$ is $\| M_{i + 1} \|$-universal over $M_i$. 
\item $M_i \in \K_{\lambda_i}$. 
\end{enumerate}

 In the base step pick any $M \in \K^T_{\lambda_0}$ and if $i$ is limit, let $M_i = \bigcup_{j < i} M_j$. 

If $i = j + 1$, by construction we are given $M_j \in
\K^T_{\lambda_{j}}$. Using that $\K^T$ has no maximal models, we
find $N \in \K^T_{\lambda_{j+1}}$ such that $M_j \leq_{pp} N$. Since
$\lambda_{j + 1}^{|T|} = \lambda_{j +1}$, by Theorem \ref{g-st} $\K^T$ is $\lambda_{j+1}$-Galois-stable.
Then by Fact \ref{uni} applied to $N$, there is $M_{j
+1} \in \Ka_{\lambda_{j +1}}$ universal over
$N$. Using that $\K^T$ has amalgamation, 
it is
straightforward to check that (1) holds. 

This finishes the construction of the chain.

Let $M = \bigcup_{i < \kappa} M_i$. By (2)
$\| M \|
= \lambda$. We show that $M$ is universal in $\K_\lambda^T$.

Let $N \in \K^{T}_\lambda$ and $\{N_i : i < \kappa \}$ an
increasing continuous chain such that $\forall
i(  N_i \in \K^T_{\lambda_i})$ and $\bigcup_{i <
\kappa} N_i = N$.  We build  $\{  f_i : i < \kappa \}$ such that:
\begin{enumerate}

\item $f_i: N_i \to M_{i +1}$.
\item $\{ f_i : i < \kappa \}$ is an
increasing chain.
\end{enumerate}

Observe that this is enough by taking $f=
\bigcup_{i < \kappa} f_i : N = \bigcup_{i < \kappa} N_i \to
\bigcup_{i < \kappa} M_{i+1} =M $.

Now, let us do the construction. In this case the base step is
non-trivial. By joint embedding there is $g: N_0  \to M^*$ with $M_0
\leq_{pp} M^* \in \K^T_{\lambda_0}$. Now, since $M_1$ is $\| M_1
\|$-universal over $M_0$ there is $h: M^* \xrightarrow[M_0]{} M_1$. Let $f_0 :=
h \circ g$ and observe that this satisfies the requirements.

We do the induction steps.

If $i$ is limit, let $f_i = \bigcup_{j < i}
f_j: N_i= \bigcup_{j< i}N_j \to M_{i + 1}$.

If $i = j +1$, by construction we have $f_j : N_j \to M _{j + 1}$ and $N_j \leq_{pp} N_{j+1}$. Since $\K^T$ has amalgamation 
 there is $M' \in \K^{T}_{\lambda_{j
+1}}$ and $g: N_{j + 1} \to M'$ such that $M_{j+1} \leq_{pp} M'$ and $f_j\rest_{N_j}=g\rest_{N_j}$. Since  $M_{j + 2}$ is $\| M_{j +2}\|$-universal over $M_{j +1}$, there
is $h : M' \xrightarrow[M_{j +1}]{} M_{j + 2}$. Let $f_{j+1} := h \circ g$ and
observe that this satisfies the requirements.\end{proof}

Putting together Lemma \ref{universal1} and Lemma \ref{main3} we
get one of our main results.

\begin{theorem}\label{mainc}
 If $\lambda^{|T|}=\lambda$ or $\forall \mu < \lambda( \mu^{|T|} <
\lambda)$, then $\K^{T}_\lambda$ has a universal model.

\end{theorem}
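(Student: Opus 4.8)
The plan is to observe that the hypothesis of the theorem is precisely the disjunction of the hypotheses of the two lemmas just proved, so that the statement follows by a routine case analysis requiring no further construction. The entire content of the argument is already contained in Lemmas \ref{universal1} and \ref{main3}; the theorem merely records their union.

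First I would split on which of the two disjuncts holds. If $\lambda^{|T|} = \lambda$, then Lemma \ref{universal1} applies directly and yields a universal model in $\K^{T}_\lambda$. Otherwise, the standing hypothesis guarantees that $\forall \mu < \lambda(\mu^{|T|} < \lambda)$, and in this case Lemma \ref{main3} supplies the universal model. Since the hypothesis is exactly ``$\lambda^{|T|}=\lambda$ or $\forall \mu < \lambda(\mu^{|T|} < \lambda)$'', the two cases are jointly exhaustive, and in each of them we have already produced the desired universal model in $\K^{T}_\lambda$.

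There is no genuine obstacle remaining at this point: all of the substantive work has been carried out upstream. Lemma \ref{universal1} rests on $\lambda$-Galois-stability (Theorem \ref{g-st}) together with the existence of universal extensions (Fact \ref{uni}) and the passage from universality over a model to global universality (Remark \ref{easy-u}), while Lemma \ref{main3} handles the limit-cardinal case by filtering $\lambda$ into a continuous chain of cardinals $\lambda_i$ with $\lambda_{i+1}^{|T|}=\lambda_{i+1}$ and building an increasing chain of universal extensions whose union absorbs any given $N\in\K^T_\lambda$. The only thing one must check here is that the case split is exhaustive, which is immediate from the ``or'' in the hypothesis. Hence the theorem follows at once.
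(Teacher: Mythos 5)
Your proposal is correct and matches the paper exactly: the paper's proof of Theorem \ref{mainc} is precisely the observation that Lemma \ref{universal1} handles the case $\lambda^{|T|}=\lambda$ and Lemma \ref{main3} handles the case $\forall \mu < \lambda(\mu^{|T|} < \lambda)$, so the theorem follows by combining the two. Nothing further is needed.
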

The proof of Lemma \ref{universal1} and Lemma \ref{main3} can be
extended in a straightforward way to the following general setting. 

\begin{cor}\label{r-aec}
Let $\K$ be an AEC with joint embedding, amalgamation and no maximal
models. Assume there is $\theta_0 \geq \LS(\K)$ and $\kappa$ such
that for all $\theta \geq \theta_0$, if $\theta^{\kappa} =
\theta$, then $\K$  is $\theta$-Galois-stable.

Suppose $\lambda > \theta_0$. If $\lambda^{\kappa}=\lambda$ or
$\forall \mu < \lambda( \mu^{\kappa} < \lambda)$, then $\K_\lambda$
has a universal model.\footnote{In Lemma \ref{universal1} and Theorem
\ref{main3} $\theta_0= \LS(\K^T)=|T|$ and $\kappa=|T|$.}
\end{cor}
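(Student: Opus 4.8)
The plan is to observe that Corollary \ref{r-aec} is just an abstraction of the two universality lemmas, so I would verify that the only properties of $\K^T$ used in the proofs of Lemma \ref{universal1} and Lemma \ref{main3} were: (i) $\K$ is an AEC with joint embedding, amalgamation, and no maximal models, and (ii) a stability-transfer statement saying that $\theta^{|T|}=\theta$ implies $\theta$-Galois-stability. In the abstract setting we replace $|T|$ by the parameter $\kappa$ and replace ``every $\theta$ with $\theta^{|T|}=\theta$ is a stability cardinal'' by the hypothesis that this holds for all $\theta \ge \theta_0$. The role of $\LS(\K^T)=|T|$ is that the small models in the construction can be taken of the relevant cardinalities; in the abstract version this is tracked by requiring $\lambda > \theta_0 \ge \LS(\K)$.

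First I would treat the case $\lambda^{\kappa}=\lambda$, mirroring Lemma \ref{universal1}. Since $\lambda > \theta_0$ and $\lambda^\kappa = \lambda$, the stability hypothesis gives that $\K$ is $\lambda$-Galois-stable. Then Fact \ref{uni} (which requires exactly joint embedding, amalgamation, no maximal models, and $\lambda$-Galois-stability) produces universal extensions of size $\lambda$, and Remark \ref{easy-u} upgrades a universal extension to a universal model in $\K_\lambda$ using joint embedding. This step is essentially immediate once the stability cardinal is identified.

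Next I would treat the case $\forall \mu < \lambda(\mu^{\kappa} < \lambda)$, following Lemma \ref{main3} verbatim with $|T|$ replaced by $\kappa$. As there, one may assume $\lambda$ is a limit cardinal (otherwise $\lambda^\kappa=\lambda$ and the first case applies). Setting $\cof(\lambda)=\theta$, the hypothesis $\mu^\kappa<\lambda$ for $\mu<\lambda$ lets me build an increasing continuous sequence $\{\lambda_i : i < \theta\}$ of cardinals with supremum $\lambda$ and with $\lambda_{i+1}^{\kappa}=\lambda_{i+1}$; to stay within the scope of the stability hypothesis I would additionally require $\lambda_0 > \theta_0$, which is harmless since $\theta_0 < \lambda$. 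Each $\lambda_{i+1}$ is then a Galois-stability cardinal by hypothesis, so the successor step of the chain construction—find $N \in \K_{\lambda_{i+1}}$ extending $M_i$ using no maximal models, then apply Fact \ref{uni} to get $M_{i+1}$ universal over $N$—goes through exactly as before, as does the back-and-forth construction of the embeddings $f_i : N_i \to M_{i+1}$ using joint embedding at the base and amalgamation at successors.

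The main point requiring care, rather than a genuine obstacle, is bookkeeping with the parameter $\theta_0$: one must ensure every cardinal appearing in the chain exceeds $\theta_0$ so that the stability hypothesis applies, and that $\LS(\K) \le \theta_0 < \lambda$ guarantees the auxiliary small models (the $N_i$ in the decomposition of the target, and the $M^*$ obtained from joint embedding at the base step) actually exist in the required cardinalities. Since all of these facts about $\K^T$ were used only through the abstract properties now taken as hypotheses, and since the footnote correctly identifies $\theta_0=\kappa=|T|$ as the instantiation recovering Theorem \ref{mainc}, I expect the proof to be a faithful re-reading of the two lemmas with $|T|$ systematically replaced by $\kappa$ and the concrete stability input (Theorem \ref{g-st}) replaced by the assumed transfer, and so I would simply state that the proofs of Lemma \ref{universal1} and Lemma \ref{main3} carry over mutatis mutandis.
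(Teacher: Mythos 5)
Your proposal is correct and is exactly the paper's own argument: the paper proves Corollary \ref{r-aec} precisely by observing that the proofs of Lemma \ref{universal1} and Lemma \ref{main3} carry over with $|T|$ replaced by $\kappa$ and Theorem \ref{g-st} replaced by the assumed stability-transfer hypothesis. Your additional bookkeeping (starting the sequence of $\lambda_i$'s above $\theta_0$, and noting that a successor $\lambda$ falls into the first case) is the right way to make the ``straightforward extension'' explicit.
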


\begin{remark}
In {\upshape \cite[4.13]{vaseya}} it is shown that if $\K$ is an AEC
with joint
embedding, amalgamation and no maximal models, $\K$ is $\LS(\K)$-tame
and $\K$ is $\lambda$-Galois-stable for some $\lambda \geq \LS(\K)$,
then there are $\theta_0$ and $\kappa$ satisfying the hypothesis of
Corollary \ref{r-aec}.

\end{remark}

\subsection{Reduced torsion-free abelian groups}
Recall that $\K^{tf}$ has joint embedding and amalgamation, so it
satisfies Hypothesis \ref{hyp}. Moreover,  $|T^{tf}|=\aleph_0$,
therefore the next assertion follows directly from Theorem \ref{g-st}
and Theorem \ref{mainc}.

\begin{cor}\label{u-tf}\
\begin{enumerate}
\item  If $\lambda^{\aleph_0}=\lambda$, then $\K^{tf}$ is
$\lambda$-Galois-stable.
 \item If $\lambda^{\aleph_0}=\lambda$ or $\forall \mu < \lambda(
\mu^{\aleph_0} < \lambda)$, then $\K^{tf}_\lambda$ has a universal
model.
\end{enumerate}

\end{cor}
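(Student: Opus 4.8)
The plan is to recognize that $\K^{tf}$ is merely a concrete instance of the class $\K^T$ studied throughout this section, and then to read off both parts directly from the general results already established. First I would fix $T = T^{tf}$, a first-order axiomatization of the torsion-free abelian groups in the language $L_{\mathbb{Z}}$, so that $\K^{tf} = \K^{T^{tf}}$; this $T^{tf}$ extends the theory of $\mathbb{Z}$-modules and has infinite models (e.g.\ $\mathbb{Z}$ itself), so the class is of exactly the form treated above.

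Next I would confirm that Hypothesis \ref{hyp} holds for $T^{tf}$. By Example \ref{ex}.(1) the class $\K^{tf}$ has both joint embedding and amalgamation, since torsion-free abelian groups are closed under direct sums and Lemma \ref{prod-ap} then applies. The one piece of bookkeeping that actually matters is the computation $|T^{tf}| = |\mathbb{Z}| + \aleph_0 = \aleph_0$ (Notation \ref{not}), valid because the ring $\mathbb{Z}$ is countable; this is precisely what lets the exponent $|T|$ appearing in the general theorems collapse to $\aleph_0$ in the present statement.

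With this in place, both parts are immediate translations. For part (1) the hypothesis $\lambda^{\aleph_0} = \lambda$ is literally $\lambda^{|T^{tf}|} = \lambda$, so Theorem \ref{g-st} yields that $\K^{tf}$ is $\lambda$-Galois-stable. For part (2) the two alternatives $\lambda^{\aleph_0} = \lambda$ and $\forall \mu < \lambda(\mu^{\aleph_0} < \lambda)$ are exactly the hypotheses $\lambda^{|T^{tf}|} = \lambda$ and $\forall \mu < \lambda(\mu^{|T^{tf}|} < \lambda)$ of Theorem \ref{mainc}, which then supplies a universal model in $\K^{tf}_\lambda$. There is no genuine obstacle beyond this translation of hypotheses; the only thing one must be careful not to skip is the verification that $|T^{tf}| = \aleph_0$ and that Hypothesis \ref{hyp} is in force, since the general machinery of Theorem \ref{g-st} and Theorem \ref{mainc} is otherwise stated for an arbitrary $\K^T$ satisfying that standing hypothesis.
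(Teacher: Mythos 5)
Your proposal is correct and matches the paper's own argument: the paper likewise observes that $\K^{tf}$ satisfies Hypothesis \ref{hyp} (via closure under direct sums, Example \ref{ex}.(1) and Lemma \ref{prod-ap}) and that $|T^{tf}|=\aleph_0$, then cites Theorem \ref{g-st} for part (1) and Theorem \ref{mainc} for part (2). Your write-up simply spells out this specialization in more detail than the paper does.
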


\begin{remark}\label{st-2}
In {\upshape \cite[0.3]{baldwine}} it is shown that: $\K^{tf}$ is
$\lambda$-Galois-stable if and only if  $\lambda^{\aleph_0}=\lambda$.
The argument given here differs substantially with that of
{\upshape \cite[0.3]{baldwine}}, their argument does not consider
$pp$-formulas and instead exploits the property that $\K^{tf}$ admits
intersections.
\end{remark}

As mentioned in the introduction, Shelah's result \cite[1.2]{sh820}
is concerned with reduced torsion-free groups instead of with
torsion-free groups. The next two assertion show how we can recover
his assertion from the above results. First let us introduce a new
class of groups.

\begin{defin}
Let $\K^{rtf}=(K^{rtf}, \leq_{pp})$ where $K^{rtf}$ is the class of
reduced torsion-free abelian groups defined in the usual
language $L_{\mathbb{Z}}$ of \( \mathbb{Z} \)-modules, 
and $\leq_{pp}$ is the pure
subgroup relation. Recall that a group $G$ is reduced if  its only
divisible subgroup is $0$.

\end{defin}

\begin{fact}\label{rtf} 
Let $\lambda$ an infinite cardinal. $\K^{tf}_\lambda$ has a universal
model if and only if $\K^{rtf}_\lambda$ has a universal model.
\end{fact}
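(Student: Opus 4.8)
The plan is to exploit the structural decomposition of abelian groups: every abelian group $G$ splits as $G = D(G) \oplus R(G)$ where $D(G)$ is the (unique) maximal divisible subgroup and $R(G)$ is reduced, unique up to isomorphism (the fact cited in the introduction from \cite[\S 4.2.5]{fuchs}). For torsion-free $G$ the divisible part $D(G)$ is a $\mathbb{Q}$-vector space, and the reduced part $R(G)$ is again torsion-free. The key observation I would record first is that this decomposition behaves well with respect to pure embeddings: if $H \leq_{pp} G$ then the divisible part of $H$ sits inside the divisible part of $G$, and passing to reduced parts is compatible with purity. Concretely, I would argue that $H \leq_{pp} G$ implies $R(H) \leq_{pp} R(G)$, and conversely that a pure embedding of reduced parts together with a $\mathbb{Q}$-linear embedding of divisible parts assembles into a pure embedding of the direct sums.

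For the forward direction, suppose $U \in \K^{tf}_\lambda$ is universal. I would take $R(U)$, the reduced part of $U$, which lies in $\K^{rtf}$ but may have cardinality smaller than $\lambda$; to fix the cardinality I would pass to $R(U) \oplus \mathbb{Z}^{(\lambda)}$ or a similar reduced group of full size $\lambda$, or alternatively observe that one can absorb a reduced universal-sized padding. Given any $N \in \K^{rtf}_\lambda$, I view $N$ as a torsion-free group, embed it purely into $U$ by universality, and then project onto the reduced part: since $N$ is already reduced, its image has trivial divisible part, so the composite with the retraction onto $R(U)$ gives a pure embedding $N \to R(U)$. The retraction onto the reduced summand is pure because $U = D(U) \oplus R(U)$ is a direct sum decomposition, and projections of direct sums are pure epimorphisms splitting the pure inclusion of $R(U)$.

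For the reverse direction, suppose $V \in \K^{rtf}_\lambda$ is universal among reduced torsion-free groups. I would claim that $U := \mathbb{Q}^{(\lambda)} \oplus V$ is universal in $\K^{tf}_\lambda$. Given arbitrary $G \in \K^{tf}_\lambda$, write $G = D(G) \oplus R(G)$; then $D(G) \cong \mathbb{Q}^{(\kappa)}$ for some $\kappa \leq \lambda$ embeds purely into $\mathbb{Q}^{(\lambda)}$, and $R(G) \in \K^{rtf}$ of size $\leq \lambda$ embeds purely into $V$ by universality of $V$ (padding $R(G)$ up to size $\lambda$ if necessary, which is harmless since reduced torsion-free groups are closed under adding reduced summands). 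The direct sum of these two pure embeddings is a pure embedding $G \to U$, using the compatibility lemma from the first paragraph.

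The main obstacle I anticipate is the cardinality bookkeeping combined with verifying that the decomposition genuinely respects purity in both directions. The clean statements "$R(G) \leq_{pp} R(U)$ follows from $G \leq_{pp} U$" and "a pure retraction onto the reduced summand exists" require a small argument that the divisible and reduced parts do not interact badly under the purity condition $nG \cap H = nH$; since divisible groups are exactly the $n$-divisible ones for all $n$, purity interacts transparently with the splitting, but this must be checked. A secondary subtlety is ensuring the reduced part used in the forward direction has cardinality exactly $\lambda$ rather than something smaller, which I would handle by taking a direct sum with a fixed reduced torsion-free group of size $\lambda$ such as $\prod_p \mathbb{Z}_{(p)}$ or $\mathbb{Z}^{(\lambda)}$, neither of which disturbs universality.
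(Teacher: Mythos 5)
Your proposal is correct and takes essentially the same approach as the paper: the paper's (two-line) proof invokes exactly the same two ingredients, namely the decomposition $G = D(G) \oplus R(G)$ into a divisible and a reduced part and the fact that divisible torsion-free groups of cardinality $\le \lambda$ embed purely into $\mathbb{Q}^{(\lambda)}$. The details you supply --- injectivity and purity of the projection onto the reduced summand (using torsion-freeness and purity of $N$ in $U$), and the padding by a reduced summand such as $\mathbb{Z}^{(\lambda)}$ to fix cardinalities --- are precisely the verifications the paper leaves implicit, and they all check out.
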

\begin{proof}
The proof follows from the fact that divisible torsion-free abelian
groups of cardinality
$\le\lambda$ are purely embeddable into $\mathbb{Q}^{(\lambda)}$
and that every group can be written as a direct sum of a unique
divisible subgroup and a unique up to isomorphisms reduced subgroup
(see \cite[\S 4.2.4, \S 4.2.5]{fuchs}). \end{proof}

The following is precisely \cite[1.2]{sh820}.

\begin{cor}\label{she-res}
\begin{enumerate}\
\item If $\lambda^{\aleph_0}= \lambda$, then $\K^{rtf}_\lambda$ has a
universal model.
\item If $\lambda = \Sigma_{n < \omega} \lambda_n$ and $\aleph_0 \leq
\lambda_n = (\lambda_n)^{\aleph_0} < \lambda_{n+1}$, then
$\K^{rtf}_\lambda$ has a universal model.
\item $\K^{rtf}$ has amalgamation, joint embedding, is an AEC and is
$\lambda$-Galois-stable if $\lambda^{\aleph_0}=\lambda$.
\end{enumerate}
\end{cor}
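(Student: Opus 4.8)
The plan is to derive all three parts of Corollary~\ref{she-res} as essentially immediate consequences of the work already done for $\K^{tf}$, transferring from the torsion-free setting to the reduced torsion-free setting by means of Fact~\ref{rtf}. The two classes are linked by the structure theorem that every torsion-free abelian group splits as a direct sum of a divisible part and a reduced part, with both summands unique (the reduced part up to isomorphism), so questions about universality and stability can be shuttled between $\K^{tf}$ and $\K^{rtf}$.

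For part~(1), I would argue as follows. If $\lambda^{\aleph_0}=\lambda$, then by Corollary~\ref{u-tf}.(2) the class $\K^{tf}_\lambda$ has a universal model, and then Fact~\ref{rtf} immediately yields that $\K^{rtf}_\lambda$ has a universal model. For part~(2), the hypothesis $\lambda=\Sigma_{n<\omega}\lambda_n$ with $\aleph_0\le\lambda_n=(\lambda_n)^{\aleph_0}<\lambda_{n+1}$ is designed precisely to guarantee the second alternative in Corollary~\ref{u-tf}.(2), namely $\forall\mu<\lambda(\mu^{\aleph_0}<\lambda)$: given $\mu<\lambda$, it lies below some $\lambda_{n+1}$, and since $\lambda_{n+1}=(\lambda_{n+1})^{\aleph_0}$ we get $\mu^{\aleph_0}\le(\lambda_{n+1})^{\aleph_0}=\lambda_{n+1}<\lambda$. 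Hence $\K^{tf}_\lambda$ has a universal model by Corollary~\ref{u-tf}.(2), and again Fact~\ref{rtf} transfers this to $\K^{rtf}_\lambda$.

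For part~(3), the AEC axioms, joint embedding, and amalgamation for $\K^{rtf}$ should follow by the same reasoning as for $\K^{tf}$: reduced torsion-free groups form a first-order--style class closed under the relevant operations, and I would verify that the splitting structure is compatible with pure embeddings. The $\lambda$-Galois-stability claim (for $\lambda^{\aleph_0}=\lambda$) should be inherited from that of $\K^{tf}$ via the same divisible/reduced decomposition, since the divisible summand is controlled up to pure embeddability by $\mathbb{Q}^{(\lambda)}$ and so contributes nothing essentially new to the count of Galois-types.

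The step I expect to require the most care is part~(3), specifically verifying Galois-stability of $\K^{rtf}$ from that of $\K^{tf}$ and confirming that $\K^{rtf}$ genuinely satisfies amalgamation and joint embedding rather than merely inheriting universality. The subtlety is that Galois-types are computed \emph{within} the class, so one must check that the decomposition respects Galois-type equality and that passing to the reduced summand does not create or collapse types; the cleanest route is to use Lemma~\ref{pp=gtp} to reduce to $pp$-types, which behave well under the splitting, and then appeal to the stability count already established in Corollary~\ref{u-tf}.(1). The universality transfers in parts~(1) and~(2) are routine once Fact~\ref{rtf} is in hand.
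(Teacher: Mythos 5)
Your treatment of parts (1) and (2) is exactly the paper's: verify that $\lambda$ satisfies one of the two hypotheses of Corollary \ref{u-tf}.(2) (your cardinal computation for (2) is correct, since $\mu<\lambda$ implies $\mu\le\lambda_{n+1}$ for some $n$ and then $\mu^{\aleph_0}\le(\lambda_{n+1})^{\aleph_0}=\lambda_{n+1}<\lambda$) and then transfer universality to $\K^{rtf}$ via Fact \ref{rtf}. The problem is in part (3), where your justification rests on a false premise: $\K^{rtf}$ is \emph{not} of the form $\K^{T}$ for any first-order theory $T$, because being reduced is not an elementary property. Indeed, an $\aleph_0$-saturated elementary extension of $\mathbb{Z}$ realizes the pp-type $\{\exists y\,(ny=x):n\ge 1\}$ by some nonzero element, and in a torsion-free group a nonzero element divisible by every $n$ generates (through its unique division chains) a nonzero divisible subgroup; so a class of reduced groups is never the class of all models of a first-order theory. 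Consequently none of the $\K^{T}$ machinery you invoke---Lemma \ref{prod-ap} for joint embedding and amalgamation, Lemma \ref{pp=gtp} for identifying Galois-types with pp-types, Theorem \ref{g-st} for stability---applies to $\K^{rtf}$ as such, and "the same reasoning as for $\K^{tf}$" does not literally go through.

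What the paper does instead is state the bridging fact that for $G,H\in\K^{rtf}$ and $a,b\in H$, $\gtp_{\K^{rtf}}(a/G;H)=\gtp_{\K^{rtf}}(b/G;H)$ if and only if $\gtp_{\K^{tf}}(a/G;H)=\gtp_{\K^{tf}}(b/G;H)$, and then count $\K^{tf}$-types using Corollary \ref{u-tf}.(1). The substantive direction of that fact (and likewise of amalgamation for $\K^{rtf}$) is that amalgams witnessing equality of $\K^{tf}$-types can be chosen \emph{reduced}: amalgamate in $\K^{tf}$, split the amalgam as $D\oplus R$ with $D$ divisible and $R$ reduced, and check that any reduced pure subgroup meets $D$ trivially (purity plus torsion-freeness makes its intersection with $D$ divisible, hence zero) and that its projection into $R$ is still a pure embedding. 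This is precisely the verification your sketch gestures at ("the splitting structure is compatible with pure embeddings") but never carries out, and it is the actual content of (3). Once it is in hand, your reduction to pp-types does work---but via Lemma \ref{pp=gtp} applied inside $\K^{tf}$, which is a legitimate $\K^{T}$, rather than inside $\K^{rtf}$, which is not.
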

\begin{proof}
For (1) and (2), realize that $\lambda$ either satisfies the first or
second hypothesis of Corollary \ref{u-tf}.(2), hence
$\K^{tf}_\lambda$ has a universal model. Then by Fact \ref{rtf} we
conclude that $\K^{rtf}_\lambda$ has a universal model in either case.

For (3), the first three assertions are easy to show. As for the last
one, this follows from Corollary \ref{u-tf}.(1) and the fact that if
$G, H \in \K^{rtf}$ and $a, b \in H$ then: $\gtp_{\K^{rtf}}(a/ G; H)=
\gtp_{\K^{rtf}}(b/ G; H)$ if and only if $\gtp_{\K^{tf}}(a/ G; H)=
\gtp_{\K^{tf}}(b/ G; H)$. \end{proof}

\begin{remark}\label{equal}
It is worth noticing that {\upshape Corollary \ref{u-tf}.(2)}
 not only implies
{\upshape \cite[1.2.1, 1.2.2]{sh820} (Corollary \ref{she-res}.(1) and
Corollary
\ref{she-res}.(2))}, but the two assertions are equivalent. The
backward direction follows from the fact that if $\lambda$ satisfies
 $\cof(\lambda) \geq \omega_1$ and $\forall \mu < \lambda(
\mu^{\aleph_0} < \lambda)$, then $\lambda^{\aleph_0}=\lambda$. 

\end{remark}

\begin{remark}
It follows from {\upshape Corollary \ref{u-tf}.(2)} that if $2^{\aleph_0}
<
\aleph_\omega$, then $\K^{tf}_{\aleph_\omega}$ has a universal model.
On the other hand, it follows from 
{\upshape \cite[3.7]{kojsh}} that if
$\aleph_\omega < 2^{\aleph_0}$, then $\K^{tf}_{\aleph_\omega}$ does
not have a universal model.  Hence the existence of a universal model
in $\K^{tf}$ of cardinality $\aleph_\omega$ is independent of 
\textup{ZFC}.
Similarly one can show that the existence of a universal model in
$\K^{tf}$ of cardinality $\aleph_n$ is independent of \textup{ZFC}
for every
$n \geq 1$.
\end{remark}

\section{Limit models in classes of $R$-modules}

In this section we will begin the study of limit models in classes of
$R$-modules under Hypothesis \ref{hyp}. The existence of limit models in $\K^T$ for
$\lambda$'s satisfying  $\lambda^{|T|}=\lambda$ follows directly
from Theorem \ref{g-st} and Fact \ref{uni}.

\begin{cor}\label{limo}
If $\lambda^{|T|}=\lambda$, then there is a $(\lambda, \alpha)$-limit
model in $\K^{T}$ for every $\alpha < \lambda^+$ limit ordinal. 
\end{cor}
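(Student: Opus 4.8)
The plan is to place $\K^T$ within the scope of the observation recorded just after Definition \ref{limit}, namely that iterating Fact \ref{uni} produces limit models in Galois-stability cardinals for AECs with joint embedding, amalgamation and no maximal models, and then to carry out that iteration explicitly. First I would collect the three structural hypotheses demanded by Fact \ref{uni}: joint embedding and amalgamation hold by Hypothesis \ref{hyp}, while no maximal models holds because $T$ has an infinite model (by the Remark following the proof that $\K^T$ is an AEC, an infinite model of $T$ has arbitrarily large elementary, hence pure, extensions). Next, since $\lambda^{|T|}=\lambda$, Theorem \ref{g-st} yields that $\K^T$ is $\lambda$-Galois-stable. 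I would also note that $\lambda > \LS(\K^T)=|T|$, since $\lambda=\lambda^{|T|}\geq 2^{|T|}>|T|$.

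Then I would fix a limit ordinal $\alpha < \lambda^+$ and build an increasing continuous chain $\{ M_i : i < \alpha \} \subseteq \K^T_\lambda$ by transfinite recursion. For the base I must produce some $M_0 \in \K^T_\lambda$: starting from an infinite model of $T$, the no maximal models property yields a model of cardinality at least $\lambda$, and the L\"{o}wenheim-Skolem-Tarski axiom, applied to a subset of size $\lambda$, extracts a pure submodel of cardinality exactly $\lambda$. At a successor stage $i+1$, I would apply Fact \ref{uni} to $M_i \in \K^T_\lambda$ to obtain $M_{i+1} \in \K^T_\lambda$ universal over $M_i$. At a limit stage $i$, I would set $M_i := \bigcup_{j < i} M_j$; this lies in $\K^T_\lambda$ because $\K^T$ is an AEC (so the union is in the class and $M_j \leq_{pp} M_i$ for all $j<i$ by the Tarski-Vaught axioms), and because $\| M_i \| \leq |i| \cdot \lambda = \lambda$ while $M_0 \leq_{pp} M_i$ forces $\| M_i \| \geq \lambda$.

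Finally, $M := \bigcup_{i < \alpha} M_i$ is, by construction and Definition \ref{limit}, a $(\lambda, \alpha)$-limit model over $M_0$, with $\| M \| = \lambda$ since $|\alpha| \leq \lambda$ and each $M_i$ has cardinality $\lambda$. I do not expect a genuine obstacle here: the statement is a direct assembly of Theorem \ref{g-st} and Fact \ref{uni} through the definition of a limit model. The only points requiring minor care are the bookkeeping of cardinalities at limit stages (ensuring the chain stays in $\K^T_\lambda$ rather than overshooting) and the continuity of the chain, both of which are automatic from the AEC axioms together with the bound $|\alpha| \leq \lambda$.
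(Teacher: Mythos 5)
Your proposal is correct and follows exactly the paper's (unwritten) argument: the paper derives Corollary \ref{limo} directly from Theorem \ref{g-st} and Fact \ref{uni}, relying on the observation after Definition \ref{limit} that iterating Fact \ref{uni} yields limit models in Galois-stability cardinals for AECs with joint embedding, amalgamation and no maximal models. You merely make explicit the transfinite construction (base step via no maximal models plus L\"{o}wenheim-Skolem-Tarski, successor steps via Fact \ref{uni}, unions at limits) that the paper leaves implicit, and your cardinality bookkeeping is sound.
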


We first show that any two limit models are elementarily equivalent. 
In order to do that, we will use one more consequence of $pp$-quantifier 
elimination \tupop Fact \ref{ppq}\tupcp.

\begin{fact}[ {\cite[2.18]{prest}}]\label{equiv} Let $M$ and $N$ 
$R$-modules.  $M$ is elementary equivalent to $N$ if and only if $\Inv(M, \phi, \psi) =
\Inv(N, \phi, \psi)$ for every $\phi, \psi$ $pp$-formulas in one free
variable such that $\textbf{Th}_R \vdash \psi \to \phi$.
\end{fact}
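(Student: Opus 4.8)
The plan is to prove both implications through the Baur--Monk--Garavaglia analysis recorded in Fact \ref{bool}. The forward implication is immediate. For each pair of pp-formulas $\phi,\psi$ with $\textbf{Th}_R \vdash \psi \to \phi$ and each $n \in \mathbb{N}$, the condition $\Inv(-,\phi,\psi) \geq n$ is, by definition, a first-order sentence. Hence if $M \equiv N$ then $M$ and $N$ satisfy exactly the same such sentences, and since the index $[\phi(M):\psi(M)]$ is the unique value in $\{1, 2, \dots\}\cup\{\infty\}$ compatible with which of the conditions $\Inv(-,\phi,\psi) \geq n$ hold, it follows that $\Inv(M,\phi,\psi) = \Inv(N,\phi,\psi)$.

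For the converse, I would argue that $M$ and $N$ satisfy the same sentences. By Fact \ref{bool} every sentence is equivalent modulo $\textbf{Th}_R$ to a boolean combination of invariant conditions, and since $M, N \models \textbf{Th}_R$ it suffices to show that $M$ and $N$ agree on every condition $\Inv(-,\phi,\psi) \geq n$. The subtlety is that Fact \ref{bool} allows $\phi,\psi$ to have arbitrarily many free variables, while the hypothesis only controls pp-formulas in one free variable; so the heart of the proof is a reduction of many-variable invariants to one-variable ones.

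I would carry out this reduction by induction on the number of free variables. For an $n$-variable pp-formula $\theta$, projecting onto the first coordinate $p : M^n \to M$ yields a short exact sequence
\[ 0 \to \theta_0(M) \to \theta(M) \xrightarrow{\ p\ } \theta_p(M) \to 0, \]
where $\theta_p := \exists v_2 \cdots \exists v_n\, \theta$ is a one-variable pp-formula and $\theta_0 := \theta(0, v_2, \dots, v_n)$ is an $(n-1)$-variable pp-formula. Applying this to $\theta = \phi$ and $\theta = \psi$, and using that $\psi \to \phi$ gives $\psi_p \to \phi_p$ and $\psi_0 \to \phi_0$, the snake lemma applied to the inclusion maps produces an exact sequence
\[ 0 \to \phi_0(M)/\psi_0(M) \to \phi(M)/\psi(M) \to \phi_p(M)/\psi_p(M) \to 0, \]
whence $\Inv(M,\phi,\psi) = \Inv(M,\phi_0,\psi_0)\cdot \Inv(M,\phi_p,\psi_p)$. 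By the inductive hypothesis the first factor is a finite product of one-variable invariants of $M$, and the second is already one such invariant, so every $n$-variable invariant of $M$ is a finite product of one-variable invariants (the product being computed in $\{1,2,\dots\}\cup\{\infty\}$). Agreement of $M$ and $N$ on all one-variable invariants therefore propagates to agreement on all invariant conditions, and Fact \ref{bool} then gives $M \equiv N$. The main obstacle is precisely this reduction --- setting up the projection sequences and checking multiplicativity of the index across them, including the case of infinite index; the rest is bookkeeping.
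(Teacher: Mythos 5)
Your proof is correct. There is, however, nothing in the paper to compare it against: the statement is imported as a Fact with a pointer to \cite[2.18]{prest}, and no argument is given there. Your derivation is the standard one from the Baur--Monk theorem (Fact \ref{bool}), and the one place where you add genuine content is the reduction of invariants of pp-formulas in several free variables to one-variable invariants. That step is needed only under the paper's reading of ``invariant condition'', whose definition does not fix the number of free variables; in Prest's own development the elimination already produces invariant sentences involving only one-variable pp-formulas (the subgroups arising in the Neumann's-lemma step are of the form $\phi(v,\bar{0})(M)$), so there the converse direction is immediate from the elimination theorem. Your reduction itself is sound: $\theta_0$ and $\theta_p$ are again pp, the implications $\psi_0\to\phi_0$ and $\psi_p\to\phi_p$ persist, and the two projection sequences with inclusions between them give, by the snake lemma, the exact sequence $0 \to \phi_0(M)/\psi_0(M) \to \phi(M)/\psi(M) \to \phi_p(M)/\psi_p(M) \to 0$, hence $\Inv(M,\phi,\psi)=\Inv(M,\phi_0,\psi_0)\cdot\Inv(M,\phi_p,\psi_p)$ computed in $\{1,2,\dots\}\cup\{\infty\}$. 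Since this decomposition is syntactic, it is uniform in the module, which is exactly what lets agreement on one-variable invariants propagate to all invariant conditions and then, via Fact \ref{bool}, to elementary equivalence. You were also right to treat $\Inv$ as valued in $\{1,2,\dots\}\cup\{\infty\}$ rather than as a literal cardinal: otherwise the forward direction would be false, e.g.\ $(\mathbb{Z}/2)^{(\aleph_0)}$ and $(\mathbb{Z}/2)^{(\aleph_1)}$ are elementarily equivalent while the indices $[\phi(M):\psi(M)]$, for $\phi$ the formula $x=x$ and $\psi$ the formula $x=0$, differ as cardinals.
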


\begin{lemma}\label{limeq}
If $M, N$ are limit models, then $M$ and $N$ are elementary equivalent.
\end{lemma}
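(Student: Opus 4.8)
The plan is to use the characterization of elementary equivalence via invariants (Fact \ref{equiv}): it suffices to show that for every pair of $pp$-formulas $\phi, \psi$ in one free variable with $\textbf{Th}_R \vdash \psi \to \phi$, we have $\Inv(M, \phi, \psi) = \Inv(N, \phi, \psi)$. Since both $M$ and $N$ are limit models, they are built as unions of increasing continuous chains in which each successor is universal over its predecessor. The key observation I would exploit is that the invariant $\Inv(-, \phi, \psi)$ is monotone along pure embeddings (Fact \ref{ineq}.(1)), so along any $\leq_{pp}$-increasing chain the value of each invariant is non-decreasing; and being a finitely-witnessed notion (the index $[\phi(M) : \psi(M)] \geq k$ is witnessed by finitely many elements), it stabilizes and is determined at the union. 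So the whole problem reduces to pinning down the common stable value of each invariant.

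First I would reduce to comparing each limit model with a fixed reference. Since any two limit models have possibly different cardinalities and lengths, the cleanest route is to show that for a single limit model $M = \bigcup_{i<\alpha} M_i$, the value $\Inv(M, \phi, \psi)$ depends only on the theory $T$ and not on the choice of chain. The mechanism is the universality of successor steps: because $M_{i+1}$ is universal over $M_i$, any model $N^* \in \K^T_{\leq \|M_i\|}$ extending $M_i$ purely embeds into $M_{i+1}$ over $M_i$, and by Fact \ref{ineq}.(1) this forces $\Inv(M_{i+1}, \phi, \psi) \geq \Inv(N^*, \phi, \psi)$. Since $\K^T$ has no maximal models, one can always find a pure extension of $M_i$ that increases the invariant as much as $T$ allows. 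Hence at each successor step the invariant is driven up to its maximal possible value: either it becomes infinite, or it reaches the largest finite value $\Inv(-, \phi, \psi) \geq k$ permitted by $T$ (recall from the axiomatization $Mod(T) = Mod(\textbf{Th}_R \cup \{\theta_\beta : \beta < \alpha\})$ that $T$ constrains these invariants only through disjunctions of conditions of the form $\Inv(-,\phi,\psi) \geq k$ and $\Inv(-,\phi,\psi) < k$).

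Concretely, for a fixed pair $\phi, \psi$, let $k^*$ be the supremum (in $\mathbb{N} \cup \{\infty\}$) of $\Inv(P, \phi, \psi)$ over all $P \in \K^T$; by the invariant axiomatization this supremum is a well-defined feature of $T$. I claim $\Inv(M, \phi, \psi) = k^*$ for every limit model $M$. The inequality $\leq$ is immediate from the definition of $k^*$. For $\geq$, pick $P \in \K^T$ witnessing a value close to $k^*$; using joint embedding and amalgamation, together with the universality of some $M_{i+1}$ over $M_i$ of large enough cardinality (here we use that the chain is cofinal and the cardinalities $\|M_i\|$ grow so that $P$ can be accommodated, possibly after amalgamating $P$ with $M_i$), we embed a copy of $P$ purely into $M$, forcing $\Inv(M, \phi, \psi) \geq \Inv(P, \phi, \psi)$. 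Taking the supremum over such $P$ gives $\Inv(M, \phi, \psi) = k^*$. Since $k^*$ depends only on $T$, any two limit models $M, N$ share all invariants and are therefore elementarily equivalent by Fact \ref{equiv}.

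I expect the main obstacle to be the amalgamation bookkeeping in the $\geq$ direction: to realize a given finite (or growing) value of an invariant inside the limit model, one must purely embed a suitable witnessing module $P$, but $P$ need not already sit over one of the $M_i$. The fix is to amalgamate $P$ with some $M_i$ over their intersection (or over $\emptyset$, using joint embedding) to land in a small pure extension of $M_i$, then invoke universality of $M_{i+1}$ over $M_i$ to absorb it; one must check the cardinality of the amalgam stays within $\|M_{i+1}\|$, which is why choosing $i$ large along the cofinal chain is essential. A secondary subtlety is the infinite case $k^* = \infty$: there one argues that arbitrarily large finite values are attained, and since the invariant is monotone and witnessed finitely, the union $M$ also has $\Inv(M, \phi, \psi) = \infty$. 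Both points are routine once the reduction to the $T$-determined value $k^*$ is set up.
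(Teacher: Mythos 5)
Your overall strategy---reduce via Fact \ref{equiv} to comparing the invariants $\Inv(-,\phi,\psi)$, and transfer finite witness configurations into a limit model using joint embedding plus universality of the successor steps of the chain---is the same mechanism the paper uses. The paper, however, runs it directly and symmetrically between the two limit models, with no detour through a canonical value: if $\Inv(M,\phi,\psi)\geq n$, the $n$ witnesses in $M$ are placed, by the downward L\"{o}wenheim--Skolem axiom applied inside $M$, into a pure submodule $M^*\leq_{pp} M$ with $M^*\in \K^T_{\LS(\K^T)}$ (purity preserves and reflects $pp$-formulas and their negations, so the witnessing configuration survives); then joint embedding puts $M^*$ and $N_0$ into some $M^{**}\in\K^T_{\|N\|}$, and universality of $N_1$ over $N_0$ absorbs $M^{**}$ into $N$, giving $\Inv(N,\phi,\psi)\geq n$. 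Symmetry then yields equality of all invariants.

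Your write-up has a genuine gap exactly at the cardinality bookkeeping you dismiss as routine. In the $\geq$ direction you propose to ``embed a copy of $P$ purely into $M$'', where $P\in\K^T$ realizes an invariant value close to $k^*$. Nothing bounds $\|P\|$: it may exceed $\|M\|$, in which case no embedding of $P$ into $M$ can exist, and your fallback of amalgamating all of $P$ with some $M_i$ and then invoking universality of $M_{i+1}$ fails for the same reason, since the amalgam has cardinality at least $\|P\|$ while $\|M_{i+1}\|=\lambda$. The remedy you offer---``the cardinalities $\|M_i\|$ grow so that $P$ can be accommodated''---rests on a misreading of Definition \ref{limit}: in a limit model every member of the chain has the \emph{same} cardinality $\lambda$, so nothing grows. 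The missing idea is precisely the paper's L\"{o}wenheim--Skolem step, applied here to $P$ rather than to $M$: since $\Inv(P,\phi,\psi)\geq n$ is witnessed by finitely many elements, cut $P$ down to $P^*\leq_{pp} P$ with $P^*\in\K^T_{\LS(\K^T)}$ containing those witnesses, note $\Inv(P^*,\phi,\psi)\geq n$, and only then use joint embedding with $M_0$ and universality of $M_1$ over $M_0$ to land the witnesses in $M$. With that insertion your $k^*$ argument goes through (and is a correct, if slightly longer, variant of the paper's proof); without it, the key step is simply false in general.
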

\begin{proof}
Assume $M$ is a $(\lambda, \alpha)$-limit model for $\alpha < \lambda^+$ and  let $\{M_i : i < \alpha \} \subseteq \K^T_\lambda$ be a witness for it. Similarly assume $N$ is a $(\mu,
\beta)$-limit model for $\beta < \mu^+$ and  let $\{N_i : i
< \beta \} \subseteq \K^T_\mu$ be a witness for it. 

By Fact \ref{equiv}, it is enough to show that for every 
$\phi, \psi$\,,  $pp$-formulas in one free
variable such that $\textbf{Th}_R \vdash \psi \to \phi$, and $n \in
\mathbb{N}$: $\Inv(M, \phi, \psi) \geq n$ if and only if $\Inv(N,
\phi, \psi) \geq n$. By the symmetry of this situation, we only need 
to prove one implication.
So consider such $pp$-formulas
 $\phi, \psi$ and $n \in \mathbb{N}$ such that 
$\Inv(M, \phi, \psi) \geq n$. 
 We show that $\Inv(N, \phi, \psi) \geq n$. 

 If $n=0$, the result is clear. So assume that $n\geq 1$. Then since  $\Inv(M, \phi, \psi) \geq n$, there are $m_0,...,m_{n-1} \in M$ such that:
\[M \models \bigwedge_{i} \phi(m_i) \wedge
\bigwedge_{i\neq j} \neg \psi(m_i - m_j).\]

Applying the downward L\"{o}wenheim-Skolem-Tarski axiom inside $M$ to $\{ m_i : i <
n\}$, we get $M^* \leq_{pp} M$ such that $M^* \in \K^{T}_{\LS(\K)}$
and $\{ m_i : i < n\} \subseteq M^*$.  Then it is still the case that
\[M^*
\models \bigwedge_{i} \phi(m_i) \wedge
\bigwedge_{i\neq j} \neg \psi(m_i - m_j). \]

By joint embedding there is $g$ and $M^{**}\in \K^T_\mu$ such that $g:
M^* \to M^{**}$ and $N_0 \leq_{pp} M^{**}$.  Then since $N_1$ is
universal over $N_0$, there is $h: M^{**} \xrightarrow[N_0]{} N_1$.  Finally,
observe that:
\[ N  \models \bigwedge_{i} \phi(h\circ g(m_i)) \wedge
\bigwedge_{i\neq j} \neg \psi(h\circ g(m_i) - h\circ g(m_j)).\]
Hence $\Inv(N, \phi, \psi) \geq n$.  \end{proof}

\begin{remark}
Observe that in the proof of the above lemma we only used that $\K^T$ is an AEC of modules with the joint embedding property.
\end{remark}

As in \cite[\S 4]{maz}, limit models with chains of big cofinality
are easier to understand than those of small cofinalities. Due to
this we begin by studying the former.

\begin{theorem}\label{bigpi}
Assume $\lambda\geq |T|^+=\LS(\K^T)^+$. If $M$ is a $(\lambda,
\alpha)$-limit model and $\cof(\alpha)\geq |T|^+$, then $M$ is
pure-injective.
\end{theorem}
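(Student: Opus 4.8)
The plan is to use the characterization of pure-injectivity from Fact~\ref{epi}: it suffices to show that every $M$-consistent $pp$-type $p(x)$ over a subset $A \subseteq M$ with $|A| \leq |R| + \aleph_0 \leq |T|$ is realized in $M$. So fix such a type $p(x)$ over such an $A$.

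The key idea exploits the long cofinality of the chain. Write $M = \bigcup_{i < \alpha} M_i$ with each $M_{i+1}$ universal over $M_i$, and $\cof(\alpha) \geq |T|^+$. Since $|A| \leq |T| < |T|^+ \leq \cof(\alpha)$, the set $A$ is small relative to the cofinality of the chain, so there is some $i_0 < \alpha$ with $A \subseteq M_{i_0}$. Here is where I would exploit universality: because $p(x)$ is $M$-consistent, it is realized by some element $b$ in some pure extension $N \geq_{pp} M$, and in particular $N \geq_{pp} M_{i_0}$. Now I want to capture $b$ together with the parameters $A$ inside a small model. Applying the downward L\"{o}wenheim–Skolem–Tarski axiom inside $N$ to $A \cup \{b\}$, I obtain $N^* \leq_{pp} N$ with $N^* \in \K^T_{|T|}$ and $A \cup \{b\} \subseteq N^*$. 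The parameter set $A$ lies in $M_{i_0}$; after amalgamating $N^*$ and $M_{i_0}$ over $A$ (using amalgamation and joint embedding), I can arrange a small model $P \in \K^T_{\lambda}$ with $M_{i_0} \leq_{pp} P$ containing a realization of $p$. Since $\|P\| \leq \lambda$ and $M_{i_0 + 1}$ is $\|M_{i_0+1}\|$-universal (hence $\lambda$-universal) over $M_{i_0}$, there is a $\K^T$-embedding $f : P \xrightarrow[M_{i_0}]{} M_{i_0 + 1}$. The image $f(b) \in M_{i_0+1} \subseteq M$ realizes the type $p\rest_{A}$, and since $\K^T$-embeddings preserve and reflect $pp$-formulas and fix $A$ pointwise, $f(b)$ realizes $p(x)$ itself in $M$.

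The main technical obstacle is the careful bookkeeping in the amalgamation step: one must ensure that after embedding both $N^*$ (carrying the realization $b$) and $M_{i_0}$ (carrying the parameters $A$) into a common model over their shared part $A$, the $pp$-type of the realization is preserved, so that it is genuinely a realization of $p$ over $A$ and not merely over $\emptyset$. This is where Lemma~\ref{pp=gtp} and the fact that $pp$-types and Galois-types coincide over models becomes useful: since $A \subseteq M_{i_0}$ is contained in a model, I can phrase the whole argument in terms of Galois-types and simply transport the realizing element along universality. Finally, I would also need the size bound $|A| \leq |T|$ to guarantee the intermediate model $P$ has cardinality at most $\lambda$ (using $\lambda \geq |T|^+$), so that universality of $M_{i_0+1}$ over $M_{i_0}$ applies. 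Once these pieces are assembled, Fact~\ref{epi} immediately yields that $M$ is pure-injective.
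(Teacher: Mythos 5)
Your outer skeleton is the same as the paper's: verify pure-injectivity via Fact~\ref{epi}, use $\cof(\alpha) \geq |T|^+$ to capture the parameter set $A$ inside some $M_{i_0}$, and then use universality of $M_{i_0+1}$ over $M_{i_0}$ to pull a realization of $p$ into the chain. But the middle of your argument has a genuine gap. After cutting $N$ down to $N^* \in \K^T_{|T|}$ containing only $A \cup \{b\}$, you propose to ``amalgamate $N^*$ and $M_{i_0}$ over $A$ (using amalgamation and joint embedding)''. This step is not available: Hypothesis~\ref{hyp} gives amalgamation over \emph{models}, and $A$ is merely a subset of cardinality $\leq |T|$, not a member of $\K^T$. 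Amalgamation over arbitrary sets is a far stronger property which is neither assumed nor true in this generality, and Lemma~\ref{pp=gtp} cannot repair this, since it identifies $pp$-types with Galois-types only over models. Even granting some common extension of $N^*$ and $M_{i_0}$, nothing forces the two embeddings to agree pointwise on $A$; the image of $b$ would then realize a conjugate of $p$ rather than $p$ itself, which is exactly the failure your ``bookkeeping'' paragraph worries about but does not resolve.

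The gap is easy to close, and the closure is precisely the paper's proof: do not discard $M_{i_0}$ when you downsize. Since $p$ is $M$-consistent, there are $N$ and $b \in N$ with $M \preceq N$ and $b$ realizing $p$; by downward L\"{o}wenheim--Skolem applied to $|M| \cup \{b\}$ (keeping all of $M$, not just $A$) one may take $N \in \K^T_\lambda$. Then $M_{i_0} \leq_{pp} M \preceq N$ gives $M_{i_0} \leq_{pp} N$, and no amalgamation over sets is needed: universality of $M_{i_0+1}$ over $M_{i_0}$ directly yields $f : N \xrightarrow[M_{i_0}]{} M_{i_0+1}$. Since $f$ fixes $A \subseteq M_{i_0}$ pointwise and pure embeddings preserve $pp$-formulas, $f(b) \in M_{i_0+1} \leq_{pp} M$ realizes $p$. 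Alternatively, keep your strategy of shrinking inside $N$, but apply L\"{o}wenheim--Skolem to $|M_{i_0}| \cup \{b\}$ instead of $A \cup \{b\}$: both $M_{i_0}$ and $b$ already sit inside the single model $N$, so coherence gives a pure extension of $M_{i_0}$ of cardinality $\lambda$ containing $b$, and again no amalgamation is ever invoked.
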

\begin{proof}
Fix $\{ M_i : i < \alpha\}$ a witness to the fact that $M$ is a
$(\lambda, \alpha)$-limit model. We show that $M$ is pure-injective
using the equivalence of Fact \ref{epi}.

Let $p(x)$ be an $ M$-consistent $pp$-type over $A \subseteq M$ and $|A|
\leq |R| + \aleph_0 = |T|$.  Then
there is a module $N$ and $b \in N$ with $M \preceq N \in \K^T_{\| M
\|}$ and $b$
realizing $p$. Since $|A| \leq |T|$ and $\cof(\alpha) \geq |T|^+$,
there
is $i < \alpha$ such that $A \subseteq M_i$.

Note that $M_i \leq_{pp} N$.  Then there is $f: N \xrightarrow[M_i]{} M_{i+1}$,
because  $M_{i+1}$ is universal over
$M_i$. Since $A$ is fixed by the
choice of $M_i$, it is easy to see that $f(b) \in M_{i+1}\leq_{pp}
M$ realizes $p(x)$. Therefore, $M$ is pure-injective. \end{proof}

The following fact about pure-injective modules is a generalization
of Bumby's result \cite{bumby}. A proof of it (and a discussion of 
the general setting) appears in
\cite[3.2]{gks}.
We will use it to show uniqueness of limit models of big cofinalities.

\begin{fact}\label{ipi}
Let $M, N$ be pure-injective modules. If there is $f: M \to N$ a
$\K^{\textbf{Th}_R}$-embedding and $g: N \to M$ a
$\K^{\textbf{Th}_R}$-embedding, then $M \cong N$. 
\end{fact}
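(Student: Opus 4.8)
The plan is to combine two ingredients: the defining property of pure-injectivity (a purely embedded pure-injective module splits off as a direct summand) and a Schr\"oder--Bernstein / Eilenberg-swindle argument. First I would use the hypotheses to realize $M$ and $N$ as direct summands of each other. Since $f\colon M \to N$ is a $\K^{\textbf{Th}_R}$-embedding we have $f[M] \leq_{pp} N$ with $f[M] \cong M$ pure-injective; by the definition of pure-injectivity $f[M]$ is then a direct summand of $N$, say $N \cong M \oplus X$. Symmetrically, $g$ gives $M \cong N \oplus Y$. Writing $C = X \oplus Y$ and substituting one decomposition into the other yields $M \cong M \oplus C$ and, by the same computation, $N \cong N \oplus C$. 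Note that $X$ and $Y$, being summands of pure-injective modules, are themselves pure-injective, so $C$ is pure-injective as well.

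Next I would run the swindle, the point being to absorb a whole countable copy $C^{(\omega)}$ rather than a single $C$. Starting from $M \cong M \oplus C$ and iterating the induced summand decomposition $M = M' \oplus C'$ (with $M' \cong M$ and $C' \cong C$), a telescoping/refinement argument should upgrade $M \cong M \oplus C$ to $M \cong M \oplus C^{(\omega)}$, and likewise $N \cong N \oplus C^{(\omega)}$; crucially $M \oplus C^{(\omega)}$ is then pure-injective, being isomorphic to $M$. Once this is in hand the finish is purely formal: since $C^{(\omega)} \cong X^{(\omega)} \oplus Y^{(\omega)}$ and $X \oplus X^{(\omega)} \cong X^{(\omega)}$, one gets $X \oplus C^{(\omega)} \cong C^{(\omega)}$, and therefore
\[
N \oplus C^{(\omega)} \cong (M \oplus X) \oplus C^{(\omega)} \cong M \oplus C^{(\omega)} \cong M,
\]
while $N \oplus C^{(\omega)} \cong N$ by the previous sentence. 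Combining these two isomorphisms gives $M \cong N$, as desired.

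The hard part will be the telescoping step that passes from ``$M$ absorbs one copy of $C$'' to ``$M$ absorbs $C^{(\omega)}$'', since arbitrary countable direct sums of pure-injectives need not be pure-injective, so one cannot naively form $C^{(\omega)}$ and remain inside the class. The resolution is exactly that the modules actually needed, namely $M \oplus C^{(\omega)}$ and $N \oplus C^{(\omega)}$, are pure-injective by virtue of being isomorphic to $M$ and $N$; alternatively one can replace the countable sum by the countable \emph{product} $C^{\mathbb{N}}$, which is automatically pure-injective, and exploit $C \oplus C^{\mathbb{N}} \cong C^{\mathbb{N}}$, or pass to pure hulls, which commute with finite direct sums (Fact \ref{pih}). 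Making the refinement rigorous is precisely the content of the pure-injective generalization of Bumby's theorem for injective modules \cite{bumby}, and it can also be obtained from the uniqueness of the decomposition of a pure-injective module as a pure hull of indecomposables together with a superdecomposable part.
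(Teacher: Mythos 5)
Your opening moves are correct: since $f[M]\cong M$ is pure-injective and $f[M]\leq_{pp} N$, it is a direct summand, so $N\cong M\oplus X$; symmetrically $M\cong N\oplus Y$; hence $M\cong M\oplus C$ and $N\cong N\oplus C$ with $C=X\oplus Y$ pure-injective. (For the record, the paper does not prove this fact at all --- it quotes it from \cite[3.2]{gks} --- so the question is whether your argument stands on its own.) The genuine gap is the telescoping step. From $M\cong M\oplus C$ one gets $M\cong M\oplus C^{(n)}$ for every finite $n$, but \emph{not} $M\cong M\oplus C^{(\omega)}$, and in the present setting that upgraded statement is not merely unjustified but false. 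Take $M=\overline{\mathbb{Z}_{(p)}}^{\,\mathbb{N}}$ (a countable direct product, hence pure-injective) and $C=\overline{\mathbb{Z}_{(p)}}$: the shift gives $M\cong M\oplus C$, yet $M\oplus C^{(\omega)}\not\cong M$, because a direct summand of a pure-injective module is pure-injective while $\overline{\mathbb{Z}_{(p)}}^{(\aleph_0)}$ is not pure-injective --- exactly the fact the paper invokes after Theorem \ref{main1}, citing \cite[\S 2]{prest}. Your justification of the step (``$M\oplus C^{(\omega)}$ is then pure-injective, being isomorphic to $M$'') is circular: it appeals to the conclusion of the very step it is supposed to justify. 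The $C^{\mathbb{N}}$ variant has the same defect: iterating $M\cong M\oplus C$ never exhibits $C^{\mathbb{N}}$ (or anything containing it) as a summand of $M$. Note also that pure-injectivity \emph{must} enter again after your step 3: ``each is isomorphic to a direct summand of the other'' does not imply isomorphism for arbitrary modules, so no formal manipulation of the relations $M\cong M\oplus C$, $N\cong N\oplus C$, $N\cong M\oplus X$, $M\cong N\oplus Y$ alone can finish the proof.

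The correct repair is the one you mention only in passing: run the swindle with the pure hull $\overline{C^{(\omega)}}$ rather than with $C^{(\omega)}$ itself. Iterating $M\cong M\oplus C$ internally produces an increasing chain of direct summands $C_1\oplus\dots\oplus C_n$ of $M$ (each $C_i\cong C$), whose union is a pure submodule $D\cong C^{(\omega)}$ of $M$ (a union of a chain of pure submodules is pure). Since $M$ is pure-injective, a pure hull of $D$ can be realized as a direct summand of $M$ (see \cite[\S 4]{prest}, \cite[3.6]{ziegler}; the paper's Fact \ref{pih} records only existence, minimality and elementarity, not this, nor commutation with direct sums), so $M\cong M''\oplus\overline{C^{(\omega)}}$. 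Pure hulls do commute with finite direct sums \tupop\cite[\S 4]{prest}\tupcp, and $C^{(\omega)}\oplus X\cong C^{(\omega)}$ by the shift at the level of direct sums, whence $\overline{C^{(\omega)}}\oplus X\cong\overline{C^{(\omega)}\oplus X}\cong\overline{C^{(\omega)}}$. Therefore
\[
N\cong M\oplus X\cong M''\oplus\overline{C^{(\omega)}}\oplus X\cong M''\oplus\overline{C^{(\omega)}}\cong M.
\]
This is the faithful generalization of Bumby's injective-hull argument and is in the spirit of what \cite[3.2]{gks} proves in greater generality. By contrast, your suggested alternative via uniqueness of the decomposition into a hull of indecomposables plus a superdecomposable part is not routine: the superdecomposable summand carries no Krull--Remak--Schmidt--Azumaya-type multiplicity theory, so mutual embeddability of those parts cannot be converted into isomorphism by bookkeeping. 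As submitted, then, your main line of argument has a real gap, though the hull-based variant you allude to can be made into a complete proof.
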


\begin{cor}\label{uniqb} Assume $\lambda\geq |T|^+=\LS(\K^T)^+$.
If $M$ is a $(\lambda, \alpha)$-limit model and $N$ is a $(\lambda,
\beta)$-limit model such that $\cof(\alpha), \cof(\beta) \geq |T|^+$,
then $M$ is isomorphic to $N$. 
\end{cor}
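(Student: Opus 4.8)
The plan is to combine the pure-injectivity of these limit models (Theorem \ref{bigpi}) with their universality (Remark \ref{easy-u}) and then invoke the Cantor--Schr\"oder--Bernstein-type result for pure-injectives (Fact \ref{ipi}). The statement is essentially a direct corollary once these three ingredients are assembled.

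First I would observe that the hypotheses are exactly those of Theorem \ref{bigpi}: we have $\lambda \geq |T|^+ = \LS(\K^T)^+$, and $\cof(\alpha), \cof(\beta) \geq |T|^+$. Hence Theorem \ref{bigpi} applies to both $M$ and $N$, so both are pure-injective. Next, since $\K^T$ has joint embedding by Hypothesis \ref{hyp} and both $M$ and $N$ are limit models in $\K^T_\lambda$, Remark \ref{easy-u} tells us that each of $M$ and $N$ is a universal model in $\K^T_\lambda$.

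From universality I would extract mutual embeddings. As $N \in \K^T_\lambda$ and $M$ is universal in $\K^T_\lambda$, there is a $\K^T$-embedding $f : N \to M$; symmetrically, as $M \in \K^T_\lambda$ and $N$ is universal, there is a $\K^T$-embedding $g : M \to N$. Now the only point requiring care is a change of class: Fact \ref{ipi} is stated for $\K^{\textbf{Th}_R}$-embeddings. But every $\K^T$-embedding is by definition a pure embedding, and every model of $T$ is a model of $\textbf{Th}_R$, so $f$ and $g$ are both $\K^{\textbf{Th}_R}$-embeddings between (pure-injective) modules. Applying Fact \ref{ipi} to the pure-injective modules $M$ and $N$ together with $f$ and $g$ yields $M \cong N$, as desired. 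I do not expect a genuine obstacle here; the whole content lies in verifying that the hypotheses of Theorem \ref{bigpi} and of Fact \ref{ipi} are met, the latter via the identification of $\K^T$-embeddings with $\K^{\textbf{Th}_R}$-embeddings.
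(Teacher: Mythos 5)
Your proposal is correct and follows essentially the same route as the paper's proof: pure-injectivity from Theorem \ref{bigpi}, mutual universality via Remark \ref{easy-u}, and then Fact \ref{ipi}, using that $\K^T$-embeddings are $\K^{\textbf{Th}_R}$-embeddings. The paper's argument is exactly this, stated more tersely.
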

\begin{proof}
It is straightforward to check that $M$ and $N$ are universal models
in $\K^T_\lambda$ (see Remark \ref{easy-u}). Since $M$ and $N$ are pure-injective by Theorem \ref{bigpi}, then
the result  follows from Fact \ref{ipi} because $\K^T$-embeddings and $\K^{\textbf{Th}_R}$-embeddings  
are the same. \end{proof}

Dealing with limit models of small cofinality is complicated. We
will only be able to describe limit models of countable cofinality
under the additional assumption that $\K^{T}$ is closed under direct
sums. All the examples of Example \ref{ex}, except Example \ref{ex}.(2), satisfy this additional hypothesis.

\begin{lemma}\label{universal} Assume $\K^{T}$ is closed under
direct sums.
If $M \in \K^{T}_{\lambda}$ is pure-injective and $U \in
\K^{T}_{\lambda}$ is a universal model in $\K^{T}_{\lambda}$, then $M
\oplus U$ is universal over $M$.
\end{lemma}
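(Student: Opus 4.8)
The plan is to unwind the definition of ``universal over'': since $\|M\|=\|U\|=\|M\oplus U\|=\lambda$ and $M\leq_{pp} M\oplus U$ (as a direct summand, hence pure submodule), it suffices to show that $M\oplus U$ is $\lambda$-universal over $M$, i.e.\ that every $N^*\in\K^T_{\leq\lambda}$ with $M\leq_{pp} N^*$ admits a pure embedding $f\colon N^*\xrightarrow[M]{}M\oplus U$ fixing $M$ pointwise.

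First I would exploit the pure-injectivity of $M$. Since $M\leq_{pp} N^*$ and $M$ is pure-injective, by definition $M$ is a direct summand of $N^*$, so $N^*=M\oplus W$ for some submodule $W$ of $N^*$. Here $W$ is itself a pure submodule of $N^*$, because a direct summand is always pure; this (and nothing more) is the role played by pure-injectivity. Note that $W$ need not be a model of $T$, but this will not matter. Next I would produce a pure embedding $g\colon W\to U$ using the universality of $U$. We may assume $\|N^*\|=\lambda$: if not, replace $N^*$ by $N^*\oplus U$, which lies in $\K^T_\lambda$ since $\K^T$ is closed under direct sums, and inside which $W$ is still a pure submodule. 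As $N^*\in\K^T_\lambda$ and $U$ is universal in $\K^T_\lambda$, there is a pure embedding $j\colon N^*\to U$; composing with the pure inclusion $W\leq_{pp} N^*$ gives $g:=j\rest_W\colon W\to U$, again a pure embedding.

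Finally I would assemble $f:=\id_M\oplus g\colon N^*=M\oplus W\to M\oplus U$, sending $m+w\mapsto m+g(w)$. Then $f\rest_M=\id_M$, and $f$ is a pure embedding because purity is preserved under direct sums of pure embeddings, so that its image $M\oplus g(W)$ satisfies $M\oplus g(W)\leq_{pp} M\oplus U$. This yields $f\colon N^*\xrightarrow[M]{}M\oplus U$, completing the verification. The main obstacle is conceptual rather than computational: one must notice that although the complement $W$ in $N^*=M\oplus W$ is generally \emph{not} a member of $\K^T$, it is only ever needed as a \emph{pure submodule} of a genuine model of $T$ (namely $N^*$, enlarged to size $\lambda$ if necessary), so that the universal map for that model restricts to the desired pure embedding of $W$ into $U$.
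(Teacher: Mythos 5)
Your proof is correct, and its skeleton matches the paper's: use pure-injectivity of $M$ to split $N^* = M \oplus W$, produce a pure embedding $g: W \to U$, and assemble $\id_M \oplus g$. Where you genuinely differ is in how $g$ is obtained, and your route is the more robust one. The paper's proof asserts that the complement $M'$ (your $W$) lies in $\K^{T}_{\leq \lambda}$ and then applies universality of $U$ (together with no maximal models) to $M'$ itself. Under the stated hypothesis this assertion is not justified: a direct summand of a model of $T$ need not be a model of $T$, even when $\K^T$ is closed under direct sums. For instance, take $T = \textbf{Th}_{\mathbb{Z}} \cup \{ \Inv(-, x=x, 2x=0) \geq 2 \}$, whose models are exactly the abelian groups $A$ with $2A \neq 0$; this class is closed under direct sums, $M = \mathbb{Q}^{(\lambda)}$ is a pure-injective model, and $N^* = \mathbb{Q}^{(\lambda)} \oplus \mathbb{Z}/2\mathbb{Z}$ is a model purely extending it, yet every complement of $M$ in $N^*$ is isomorphic to $\mathbb{Z}/2\mathbb{Z}$, which is not a model of $T$. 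Your device --- inflate $N^*$ to $N^* \oplus U \in \K^{T}_{\lambda}$ if necessary, embed that whole model into $U$ by universality, and restrict to $W$ --- applies universality only to genuine members of $\K^{T}_{\lambda}$ and uses $W$ purely as a pure submodule, which is exactly what is needed to make the argument work in full generality; it is, in effect, the correct repair of the paper's shortcut. (In the paper's concrete examples, such as $\K^{tf}$, definable categories, or universal Horn classes, the class is closed under direct summands, so the shortcut is harmless there.) The two auxiliary facts you invoke to finish --- transitivity of purity, and preservation of purity under direct sums of pure embeddings, the latter because pp-formulas commute with finite direct sums --- are standard and correctly used.
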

\begin{proof}
It is clear that $M \leq_{pp} M \oplus U$ and that both modules have
the same cardinality, so  take $N \in \K^{T}_{\lambda}$  such that $M
\leq_{pp} N$. Since $M$ is pure-injective we have that $N = M \oplus
M'$ for some $M' \in \K^{T}_{\leq \lambda}$. Using that $\K^{T}$ has
no maximal models and that $U$ is universal in $\K^T_\lambda$, there
is $f': M'  \to U$ a $\K^{T}$-embedding. Let $f: M \oplus M' \to M
\oplus U$
be given by $f(a + b)=  a + f'(b)$. It is easy to check that $f$ is a
$\K^{T}$-embedding that fixes $M$. \end{proof}

\begin{theorem}\label{countablelim} Assume $\lambda\geq
|T|^+=\LS(\K^T)^+$ and $\K^T$ is closed under direct sums. If $M$ is
a $(\lambda, \omega)$-limit model and  $N$ is a
$(\lambda, |T|^+)$-limit model, then $M \cong  N^{(\aleph_0)}$.

\end{theorem}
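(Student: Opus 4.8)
The plan is to show that $N^{(\aleph_0)}$ is itself a $(\lambda,\omega)$-limit model, and then to conclude $M\cong N^{(\aleph_0)}$ by invoking the uniqueness of limit models of a fixed countable cofinality. First I would record the properties of $N$. Since $\cof(|T|^+)=|T|^+\geq |T|^+$, Theorem \ref{bigpi} gives that $N$ is pure-injective, and by Remark \ref{easy-u} (using joint embedding) $N$ is a universal model in $\K^T_\lambda$. As $\|N\|=\lambda$ and $\lambda\cdot\aleph_0=\lambda$, closure under direct sums gives $N^{(\aleph_0)}\in\K^T_\lambda$.

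The heart of the argument is to exhibit a witnessing chain for $N^{(\aleph_0)}$. For $n<\omega$ let $P_n$ be the submodule of $N^{(\aleph_0)}$ consisting of the elements supported on the first $n+1$ coordinates, so that $P_n\cong N^{n+1}$, $P_0=N$, and $\bigcup_{n<\omega}P_n=N^{(\aleph_0)}$ (continuity being vacuous below $\omega$, and each $P_n\leq_{pp}P_{n+1}$ since direct summands are pure). Each $P_n$ is a finite direct sum of copies of the pure-injective module $N$, hence is itself pure-injective, as finite direct sums of pure-injectives are pure-injective. Writing $P_{n+1}=P_n\oplus N$, I would then apply Lemma \ref{universal} with the pure-injective $M:=P_n$ and the universal model $U:=N$ (both of cardinality $\lambda$) to conclude that $P_{n+1}$ is universal over $P_n$. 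Thus $\{P_n:n<\omega\}$ witnesses that $N^{(\aleph_0)}$ is a $(\lambda,\omega)$-limit model over $N$.

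Finally, both $M$ and $N^{(\aleph_0)}$ are $(\lambda,\omega)$-limit models, so they are isomorphic by the uniqueness of limit models of the same cofinality, a back-and-forth argument valid in any AEC with amalgamation. Concretely, one interleaves the two witnessing $\omega$-chains, using at each step that the next model in each chain is universal over its predecessor and that each model, having cardinality $\lambda$, embeds into the other limit model (which is universal in $\K^T_\lambda$ by Remark \ref{easy-u}); this produces an increasing sequence of partial isomorphisms whose union is an isomorphism $M\cong N^{(\aleph_0)}$.

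The creative step is the middle one: recognizing that the finite powers $N^{n+1}$ remain pure-injective and feeding them into Lemma \ref{universal} to realize $N^{(\aleph_0)}$ as an $\omega$-limit. The step I expect to require the most care is the concluding uniqueness claim, since for countable cofinality one \emph{cannot} reuse the pure-injectivity plus Bumby-type cancellation of Corollary \ref{uniqb} (an $\omega$-limit need not be pure-injective); one must instead run the genuine back-and-forth between the two $\omega$-indexed universal chains, relying only on amalgamation and universality.
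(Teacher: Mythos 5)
Your proposal is correct and follows essentially the same route as the paper: the paper also builds the chain of finite direct powers $N_i \cong N^{i}$, notes that each is pure-injective (finite direct sums of pure-injectives being pure-injective), applies Lemma \ref{universal} with the universal model $N$ to see that $N_{i+1} = N_i \oplus N$ is universal over $N_i$, concludes that $N^{(\aleph_0)}$ is a $(\lambda,\omega)$-limit model, and finishes with the same back-and-forth uniqueness argument for limit models of equal cofinality. Your added care about why $N$ is universal in $\K^T_\lambda$ (Remark \ref{easy-u}) and why the concluding uniqueness cannot use Corollary \ref{uniqb} matches the paper's implicit reasoning.
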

\begin{proof}
 For every $ i < \omega$, let $N_i$ be given by $i$-many direct copies of $N$. Consider the increasing
chain $\{ N_i : i < \omega \} \subseteq \K^{T}_\lambda$. 

By Theorem \ref{bigpi} $N \in \K^{T}$  is pure-injective. Since
pure-injective modules are closed under finite direct sums, $N_i$ is
pure-injective for every $i < \omega$ . Moreover, for each $i <
\omega$, $N_{i+1}= N_i  \oplus N$ is universal over $N_i$ because $N$
is universal in $\K^{T}_\lambda$, $N_i$ is pure-injective and by
Lemma \ref{universal}. Therefore, $N_\omega := \bigcup_{i < \omega}
N_i$ is a $(\lambda, \omega)$-limit  model.

Since $N_\omega$ and $M$ are limit models with chains of the same
cofinality, a back-and-forth argument shows that $N_\omega \cong M$.
Hence $M \cong N^{(\aleph_0)}$. \end{proof}

Lemma \ref{universal} can also be used to characterize Galois-stability in classes closed under direct sums.

\begin{cor} Assume $\K^{T}$ is closed under
direct sums and $\lambda \geq |T|^+$ is an infinite cardinal.
$\K^T$ is $\lambda$-Galois-stable if and only if
$\K^T_\lambda$ has a pure-injective universal model.
\end{cor}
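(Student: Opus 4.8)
The statement is a biconditional, and the two directions have quite different flavors: the forward direction is essentially a repackaging of results already in hand, while the backward direction carries the real content.

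For the forward direction ($\lambda$-Galois-stability implies the existence of a pure-injective universal model), I would argue as follows. Since $\K^T$ has joint embedding, amalgamation and no maximal models (by Hypothesis \ref{hyp} together with the remark that $\K^T$ has no maximal models) and is assumed $\lambda$-Galois-stable, iterating Fact \ref{uni}, as noted after Definition \ref{limit}, produces a $(\lambda, \alpha)$-limit model for every limit ordinal $\alpha < \lambda^+$. I would take $\alpha = |T|^+$, which is legitimate since $\lambda \geq |T|^+$ forces $|T|^+ \leq \lambda < \lambda^+$ and $|T|^+$ is a limit ordinal. Writing $N$ for such a $(\lambda, |T|^+)$-limit model, its witnessing chain has cofinality $\cof(|T|^+) = |T|^+$, so Theorem \ref{bigpi} applies and $N$ is pure-injective; and since $\K^T$ has joint embedding, $N$ is universal in $\K^T_\lambda$ by Remark \ref{easy-u}. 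Thus $N$ is the desired pure-injective universal model.

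The backward direction is the heart of the matter. Suppose $U \in \K^T_\lambda$ is pure-injective and universal. The plan is to first bound the number of Galois-types over $U$ itself and then transfer the bound to an arbitrary $M \in \K^T_\lambda$. Because $U$ is pure-injective and universal, Lemma \ref{universal}, applied with both modules taken to be $U$ and using closure under direct sums, yields that $U \oplus U$ is universal over $U$. Consequently every $p \in \gS(U)$ is realized by some element in a pure extension $N \geq_{pp} U$ that we may take of size $\lambda$ by the L\"owenheim-Skolem-Tarski axiom, and hence, via a $\K^T$-embedding over $U$ into $U \oplus U$, is already realized in $U \oplus U$; this gives $|\gS(U)| \leq \| U \oplus U \| = \lambda$.

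It then remains to pass from $U$ to an arbitrary $M \in \K^T_\lambda$. Using universality of $U$ and isomorphism-invariance of the Galois-type spaces, I may assume $M \leq_{pp} U$. The key observation is that the restriction map $\gS(U) \to \gS(M)$, $p \mapsto p\rest_{M}$, is surjective: given $q = \gtp(b/M; N)$ with $M \leq_{pp} N$, amalgamation over $M$ (Hypothesis \ref{hyp}) produces $L$ with $U \leq_{pp} L$ and a pure embedding of $N$ into $L$ fixing $M$, and the type over $U$ of the image of $b$ in $L$ restricts to $q$. Therefore $|\gS(M)| \leq |\gS(U)| \leq \lambda$, and $\K^T$ is $\lambda$-Galois-stable. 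The main obstacle to anticipate is the temptation to prove the converse by showing directly that the pure-injective $U$ realizes every $M$-consistent $pp$-type over $M$ \emph{exactly}; this fails in general, since decomposing a pure extension as $L = U \oplus U''$ and projecting a realization into $U$ only produces an element whose $pp$-type \emph{contains}, but may properly contain, the target type, because the complementary summand $U''$ need not satisfy the relevant $pp$-formulas at $0$. Routing the argument through the universal extension $U \oplus U$ and surjectivity of restriction neatly sidesteps this difficulty.
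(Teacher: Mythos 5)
Your proof is correct, and while the forward direction coincides with the paper's (a $(\lambda,|T|^+)$-limit model exists by stability and is pure-injective by Theorem \ref{bigpi}, universal by Remark \ref{easy-u}), your backward direction takes a genuinely different route. The paper fixes an arbitrary $M \in \K^T_\lambda$, embeds it purely into $U$, and then invokes the pure hull machinery of Fact \ref{pih}: by minimality, $\overline{M} \leq_{pp} U$, so $\overline{M} \in \K^T_\lambda$, and Lemma \ref{universal} applied to the pair $(\overline{M}, U)$ shows that $\overline{M} \oplus U$ is universal over $\overline{M}$, hence realizes every Galois-type over $M$; counting realizations gives $|\gS(M)| \leq \lambda$. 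You instead apply Lemma \ref{universal} only once, to the pair $(U,U)$, to get $U \oplus U$ universal over $U$, bound $|\gS(U)| \leq \lambda$, and then transfer the bound to arbitrary $M \leq_{pp} U$ via surjectivity of the restriction map $\gS(U) \to \gS(M)$, which is the standard amalgamation argument. What your version buys is independence from pure hulls altogether and a cleanly isolated general principle: in any AEC with amalgamation, bounding the number of Galois-types over a single universal model of size $\lambda$ suffices for $\lambda$-Galois-stability, with pure-injectivity of $U$ entering only through Lemma \ref{universal}. What the paper's version buys is a single explicit model, $\overline{M} \oplus U$, realizing all types over the given $M$, which is in keeping with how Lemma \ref{universal} is used elsewhere (e.g.\ in Theorem \ref{countablelim}); the cost is the reliance on existence, minimality, and purity properties of $\overline{M}$. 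Your closing caveat about why one cannot simply project realizations into $U$ is well taken and does not affect the argument.
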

\begin{proof}
The forward direction follows from the fact that $(\lambda, |T|^+)$-limit models are pure-injective by Theorem \ref{bigpi}. So we sketch the backward direction. Let $M \in \Ka_\lambda$ and $U\in \Ka_\lambda$ a  pure-injective universal model. By universality of $U$ we may assume that $M \leq_{pp} U$. Then by minimality of the pure hull we have that $\overline{M} \leq_{pp} U$, thus $\overline{M} \in \Ka_\lambda$. So by Lemma \ref{universal} $\overline{M} \oplus U$ is universal over $\overline{M}$. Therefore, every type over $M$ is realized in $\overline{M} \oplus U$. Hence $| \gS(M) |\leq \| \overline{M} \oplus U \|=\lambda$.  \end{proof}

\begin{remark} Observe that by Corollary \ref{uniqb} we know that for
every cardinal $\lambda$ the number of non-isomorphic limit models is
bounded by $|\{\alpha : \alpha \leq |T|, \alpha \text{ is limit and }
\cof(\alpha)=\alpha \}| +1 $. So for example, when $R$ is countable,
we
know that there are at most two non-isomorphic limit models.
\end{remark}

We believe the following question is very interesting (see also Conjecture 2 of \cite{bovan}):

\begin{question}\label{quest}
Let $\K^{T}$ as in Hypothesis \ref{hyp}. How does the spectrum of
limit models look like? 

More precisely, given $\lambda$, how many non-isomorphic limit models
are there of cardinality $\lambda$ for a given $\K^T$? Is it always
possible to find $T$ such that $\K^T$ has the maximum number of
non-isomorphic limit models?
\end{question}

We will be able to answer Question \ref{quest} when the ring is
countable.

\begin{theorem} Let $R$ be a countable ring. Assume $\K^{T}$
satisfies Hypothesis \ref{hyp}.
\begin{enumerate}
\item If $\K^T$ is Galois-superstable\footnote{We say that $\K$ is
\emph{Galois-superstable} if there is $\mu < \beth_{(2^{\LS(\K)})^+}$
such that $\K$ is $\lambda$-Galois-stable for every $\lambda\geq \mu$. Under
the assumption of joint embedding, amalgamation, no maximal models
and $\LS(\K)$-tameness by \cite{grva} and \cite{vaseyt} the definition
of the previous line is equivalent to any other definition of
Galois-superstability given in the context of AECs.}, then there is $\mu <
\beth_{(2^{\aleph_0})^+}$  such for every $\lambda \geq \mu$ there is
a unique limit model of cardinality $\lambda$.
\item If $\K^T$ is not Galois-superstable, then $\K^T$ does not have
uniqueness of limit models in any infinite cardinal $\lambda\geq
\LS(\K^T)^+=\aleph_1$. More precisely, if $\K^T$ is $\lambda$-Galois-stable there
are exactly two non-isomorphic limit models of cardinality $\lambda$.
\end{enumerate}

\end{theorem}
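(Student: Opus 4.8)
The plan is to prove the two parts by leveraging the structural results already established, together with the known equivalences for Galois-superstability in tame AECs with amalgamation over a countable ring (so $\LS(\K^T)=|T|=\aleph_0$ and $\LS(\K^T)^+=\aleph_1$).

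For part (1), I would first invoke the hypothesis of Galois-superstability: by definition there is $\mu < \beth_{(2^{\aleph_0})^+}$ such that $\K^T$ is $\lambda$-Galois-stable for every $\lambda \geq \mu$. The key point is that in the superstable setting one expects \emph{uniqueness} of limit models of the \emph{same} cardinality regardless of the cofinality of the chain. By Corollary \ref{uniqb} we already have uniqueness whenever both chains have cofinality $\geq |T|^+=\aleph_1$. The real work is to show that a $(\lambda,\omega)$-limit model is isomorphic to a $(\lambda,\aleph_1)$-limit model under superstability. Here is where I expect to use the characterization machinery: by Theorem \ref{bigpi} the limit model of cofinality $\aleph_1$ is pure-injective, and by Theorem \ref{countablelim} (when $\K^T$ is closed under direct sums) the $(\lambda,\omega)$-limit model $M$ satisfies $M \cong N^{(\aleph_0)}$ where $N$ is the $(\lambda,\aleph_1)$-limit model. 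The superstability hypothesis should force $N \cong N^{(\aleph_0)}$ — intuitively because superstability gives a strong closure/saturation property making $N$ already absorb countably many copies of itself. Thus $M \cong N$, giving uniqueness across all limit lengths, and hence a unique limit model of each cardinality $\lambda \geq \mu$.

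For part (2), suppose $\K^T$ is not Galois-superstable. For a countable ring, the relevant dichotomy (via \cite{grva}, \cite{vaseyt}) says that failure of superstability manifests as a failure of the limit model to be pure-injective at small cofinalities. Concretely, if $\K^T$ is $\lambda$-Galois-stable for some $\lambda \geq \aleph_1$, then there exist both a $(\lambda,\omega)$-limit model and a $(\lambda,\aleph_1)$-limit model (by Corollary \ref{limo} and the iteration of Fact \ref{uni}). By Corollary \ref{uniqb}, all limit models of cofinality $\geq \aleph_1$ are mutually isomorphic, so they collapse to a single isomorphism type; and by Theorem \ref{bigpi} this type is pure-injective. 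The $(\lambda,\omega)$-limit model $M$ is, by Theorem \ref{countablelim}, isomorphic to $N^{(\aleph_0)}$ with $N$ the high-cofinality model. The failure of superstability is exactly what prevents $N^{(\aleph_0)} \cong N$, so $M$ is \emph{not} pure-injective and hence not isomorphic to $N$. This yields exactly two isomorphism types of limit models: the pure-injective one (all cofinalities $\geq \aleph_1$) and the non-pure-injective countable-cofinality one. That these are the only two follows from the Remark after Corollary \ref{countablelim}: over a countable ring the number of limit models is bounded by $|\{\alpha \leq |T| : \alpha \text{ limit}, \cof(\alpha)=\alpha\}|+1 = 2$, the two relevant regular $\alpha \leq \aleph_0$ being $\omega$ and (vacuously, since $|T|=\aleph_0$) only $\omega$ itself, plus the high-cofinality type.

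The main obstacle I anticipate is pinning down precisely the algebraic content of superstability as the statement ``$N \cong N^{(\aleph_0)}$'' for the high-cofinality limit model $N$, and conversely extracting a genuine non-isomorphism $N^{(\aleph_0)} \not\cong N$ from the \emph{failure} of superstability. This is where the general AEC characterizations of Galois-superstability (cited via \cite{grva}, \cite{vaseyt}) must be connected to the concrete pure-injectivity dichotomy for modules; the bridge is that superstability is equivalent to the uniqueness of limit models across cofinalities, which in the module setting is equivalent to pure-injectivity of \emph{every} limit model, and Theorem \ref{countablelim} translates this into an isomorphism statement about $N$ and its countable power. Handling the bookkeeping that the two types are genuinely distinct (rather than accidentally coinciding) and that no third type arises will require care, but the counting remark provides the upper bound of two.
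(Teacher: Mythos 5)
Your proposal has two genuine gaps, both concentrated in the step you yourself flag as ``the main obstacle.'' First, your entire argument routes through Lemma \ref{universal} and Theorem \ref{countablelim} to write the $(\lambda,\omega)$-limit model as $N^{(\aleph_0)}$, but those results require $\K^T$ to be \emph{closed under direct sums}, which is not part of Hypothesis \ref{hyp}: the theorem assumes only joint embedding and amalgamation. Example \ref{ex}.(2) (a complete theory $T$) satisfies the hypothesis while $Mod(T)$ is essentially never closed under direct sums, so the decomposition $M\cong N^{(\aleph_0)}$ is simply unavailable in the generality claimed. Second, even granting that decomposition, the pivot of both parts --- ``superstability forces $N\cong N^{(\aleph_0)}$'' and ``failure of superstability prevents $N^{(\aleph_0)}\cong N$'' --- is asserted, not proved. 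The second assertion is the real mathematical content of part (2): one must show that in a non-superstable $\K^T$ the countable-cofinality limit model is not pure-injective (equivalently, not isomorphic to the pure-injective high-cofinality one). Nothing in your proposal produces this; ``superstability gives a strong closure/saturation property'' is not an argument, and deducing it from the abstract equivalences of \cite{grva}, \cite{vaseyt} in a \emph{single} stability cardinal $\lambda$ (as opposed to eventually) is exactly the delicate point you would need to check.

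The paper's proof avoids both issues. For (1) it does not touch direct sums at all: since $\K^T$ is $(<\aleph_0)$-tame (Corollary \ref{tame1}) and has JEP, AP and no maximal models, superstability yields eventual uniqueness of limit models directly from the general AEC results \cite[4.24]{vaseyt} and \cite[5.5]{grva}. For (2) it adapts the concrete module-theoretic arguments of \cite[4.19--4.23]{maz} to show that when $\K^T$ is $\lambda$-Galois-stable but not superstable, the $(\lambda,\omega)$-limit model fails to be pure-injective; combined with Theorem \ref{bigpi} this separates it from the $(\lambda,\omega_1)$-limit model, and then a back-and-forth argument (countable cofinality) together with Corollary \ref{uniqb} (uncountable cofinality) gives exactly two isomorphism types --- your counting of the upper bound is the one part of the proposal that does align with the paper. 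To repair your write-up you would need either to add closure under direct sums as a hypothesis (weakening the theorem) or, better, to replace the $N^{(\aleph_0)}$ device with a direct proof of non-pure-injectivity of the countable-cofinality limit model along the lines of \cite{maz}.
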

\begin{proof}[Proof sketch] $\K^T$ has joint embedding, amalgamation and
no maximal models and by Corollary \ref{tame1} $\K^T$ is
$(<\aleph_0)$-tame. Due to this we can use the results of \cite{grva}
and \cite{vaseyt}.
\begin{enumerate} 
\item This follows on general grounds from \cite[4.24]{vaseyt} and
\cite[5.5]{grva}. 

\item Let $\lambda \geq \aleph_1$ such that $\K^T$ is
$\lambda$-Galois-stable. As in \cite[4.19, 4.20, 4.21, 4.23]{maz} one can
show that the limit models of countable cofinality are not
pure-injective. Since we know that limit models of uncountable
cofinality are pure-injective by Theorem \ref{bigpi}, we can conclude that
the $(\lambda,\omega)$-limit model and the $(\lambda,\omega_1)$-limit 
model are not isomorphic. Moreover, given $N$ a 
$(\lambda, \alpha)$-limit model, $N$ is isomorphic to the
$(\lambda,\omega)$-limit model if $cf(\alpha) =\omega$ 
(by a back-and-forth argument) or $N$ is isomorphic to the
$(\lambda,\omega_1)$-limit model if $cf(\alpha) >\omega$ 
(by Corollary \ref{uniqb}).
\end{enumerate}
\end{proof}

\subsection{Torsion-free abelian groups} In this section we will show
how to apply the results we just obtained to answer Question 4.25 of
\cite{maz}. 

Recall that a group $G$ is \emph{algebraically compact} if given
$\mathbb{E}=\{f_i(x_{i_0},...,x_{i_{n_i}})=a_i : i < \omega \}$ a set
of linear equations over $G$, $\mathbb{E}$ is finitely solvable in
$G$ if and only if  $\mathbb{E}$ is solvable in $G$. It is well-known that an abelian group $G$ is algebraically compact if and only if
$G$ is pure-injective (see e.g. \cite[1.2, 1.3]{fuchs}). The following theorem answers Question 4.25 of \cite{maz}.

\begin{theorem}\label{main1} If $G\in \K^{tf}$ is a $(\lambda, \omega)$-limit model, then $G \cong  \mathbb{Q}^{(\lambda)}
\oplus \prod_{p} \overline{\mathbb{Z}_{(p)}^{(\lambda)}}^{(\aleph_0)}$.

\end{theorem}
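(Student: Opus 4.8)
The plan is to combine the two structural results already in hand and then clear up a single algebraic discrepancy by a divisibility argument. Since a $(\lambda,\omega)$-limit model exists, $\K^{tf}$ is $\lambda$-Galois-stable, so $\lambda^{\aleph_0}=\lambda$ (hence $\lambda\geq\aleph_1$); and as $|T^{tf}|=\aleph_0$ we have $|T|^+=\aleph_1=\LS(\K^{tf})^+$. Thus Theorem \ref{countablelim} applies and gives $G\cong N^{(\aleph_0)}$, where $N$ is a $(\lambda,\omega_1)$-limit model. By Fact \ref{purec} (with $\cof(\omega_1)=\omega_1\geq\omega_1$), $N\cong\mathbb{Q}^{(\lambda)}\oplus\prod_p\overline{\mathbb{Z}_{(p)}^{(\lambda)}}$. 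Writing $B_p:=\overline{\mathbb{Z}_{(p)}^{(\lambda)}}$ and distributing the countable direct sum over the finite one, together with $(\mathbb{Q}^{(\lambda)})^{(\aleph_0)}\cong\mathbb{Q}^{(\lambda)}$ (valid as $\lambda\geq\aleph_1$), I obtain $G\cong\mathbb{Q}^{(\lambda)}\oplus(\prod_p B_p)^{(\aleph_0)}$. So it remains to identify $(\prod_p B_p)^{(\aleph_0)}$ with $\prod_p B_p^{(\aleph_0)}$ after the summand $\mathbb{Q}^{(\lambda)}$ is adjoined.

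The key observation is that these two modules are \emph{not} isomorphic outright: there is an evident pure inclusion $(\prod_p B_p)^{(\aleph_0)}\leq_{pp}\prod_p B_p^{(\aleph_0)}$ (the left side forces a uniform finite bound on the coordinates used, the right side lets that bound depend on $p$), but the two differ only by a divisible group. I would prove that $V:=\bigl(\prod_p B_p^{(\aleph_0)}\bigr)\big/\bigl(\prod_p B_p\bigr)^{(\aleph_0)}$ is torsion-free and divisible, hence a $\mathbb{Q}$-vector space. Torsion-freeness is immediate from purity of the inclusion. For divisibility, fix $z\in\prod_p B_p^{(\aleph_0)}$ and $n\geq 1$. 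Each $B_p$ is $q$-divisible for every prime $q\neq p$ (because $\mathbb{Z}_{(p)}$ is, and $\mathbb{Z}_{(p)}^{(\lambda)}\preceq B_p$ by Fact \ref{pih}.(2), so the first-order divisibility axioms transfer), so in every coordinate $p$ not dividing $n$ one solves $nw=z$ exactly, while the finitely many coordinates with $p\mid n$ each have only finite support; choosing $w=0$ there, the total error $nw-z$ has finite support and therefore lies in $(\prod_p B_p)^{(\aleph_0)}$. Hence $V$ is divisible.

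Since a $\mathbb{Q}$-vector space is an injective $\mathbb{Z}$-module, the pure-exact sequence $0\to(\prod_p B_p)^{(\aleph_0)}\to\prod_p B_p^{(\aleph_0)}\to V\to 0$ splits, so $\prod_p B_p^{(\aleph_0)}\cong(\prod_p B_p)^{(\aleph_0)}\oplus V$. As $\bigl\|\prod_p B_p^{(\aleph_0)}\bigr\|=\lambda^{\aleph_0}=\lambda$, we get $\dim_{\mathbb{Q}}V\leq\lambda$, whence $\mathbb{Q}^{(\lambda)}\oplus V\cong\mathbb{Q}^{(\lambda)}$. Combining, $\mathbb{Q}^{(\lambda)}\oplus\prod_p B_p^{(\aleph_0)}\cong\mathbb{Q}^{(\lambda)}\oplus(\prod_p B_p)^{(\aleph_0)}\oplus V\cong\mathbb{Q}^{(\lambda)}\oplus(\prod_p B_p)^{(\aleph_0)}\cong G$, which is exactly the claimed $G\cong\mathbb{Q}^{(\lambda)}\oplus\prod_p\overline{\mathbb{Z}_{(p)}^{(\lambda)}}^{(\aleph_0)}$.

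I expect the divisibility computation to be the main obstacle. The naive expectation that the two bracketings are isomorphic is false, and the whole argument turns on the two facts that (i) the discrepancy $V$ is a divisible group, harmless once the $\mathbb{Q}^{(\lambda)}$ summand is present, and (ii) for each $n$ there is exactly one ``bad'' prime per prime divisor of $n$, which is precisely what keeps the error of finite support. The $q$-divisibility of $B_p$ for $q\neq p$ is the structural input that makes (i) and (ii) work, so verifying it carefully from Fact \ref{pih}.(2) is where I would be most careful.
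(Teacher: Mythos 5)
Your first half is exactly the paper's argument and is fine: Galois-stability gives $\lambda^{\aleph_0}=\lambda$ (Remark \ref{st-2}), Corollary \ref{limo} gives a $(\lambda,\omega_1)$-limit model $N$, Theorem \ref{countablelim} gives $G\cong N^{(\aleph_0)}$, and Fact \ref{purec} together with distributing the countable direct sum yields $G\cong\mathbb{Q}^{(\lambda)}\oplus\bigl(\prod_p B_p\bigr)^{(\aleph_0)}$, where $B_p:=\overline{\mathbb{Z}_{(p)}^{(\lambda)}}$. The paper stops there, because in Theorem \ref{main1} the superscript $(\aleph_0)$ is intended to apply to the whole product; the displayed chain of isomorphisms in the paper's proof is nothing more than this rearrangement. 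You instead read the conclusion as $\mathbb{Q}^{(\lambda)}\oplus\prod_p\bigl(B_p^{(\aleph_0)}\bigr)$ and devote your second half to bridging the two bracketings. That is where the proof breaks, and the break is not repairable, because under your reading the statement is false.

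Write $A:=\bigl(\prod_p B_p\bigr)^{(\aleph_0)}$, $C:=\prod_p\bigl(B_p^{(\aleph_0)}\bigr)$, $V:=C/A$. Your computation that $V$ is torsion-free and divisible is correct, but the splitting step is wrong: injectivity of the \emph{quotient} never splits an exact sequence (one needs the subgroup injective, or the quotient projective; for a pure-exact sequence, the subgroup pure-injective or the quotient pure-projective, and a $\mathbb{Q}$-vector space is neither projective nor pure-projective). In fact $0\to A\to C\to V\to 0$ provably does not split: each $B_p^{(\aleph_0)}$ is Hausdorff in its $p$-adic topology because $B_p$ is, so $\bigcap_{n\geq 1} nC=\prod_p\bigcap_{k}p^kB_p^{(\aleph_0)}=0$, i.e.\ $C$ is reduced, whereas $A\oplus V$ has the nonzero divisible summand $V$. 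Moreover, adjoining $\mathbb{Q}^{(\lambda)}$ cannot rescue the conclusion: $A$ and $C$ are both reduced, so by uniqueness of the reduced part (the paper already invokes \cite[\S 4.2.5]{fuchs} for this), $\mathbb{Q}^{(\lambda)}\oplus A\cong\mathbb{Q}^{(\lambda)}\oplus C$ would force $A\cong C$, and $A\not\cong C$. To see this, set $p^{\omega}G:=\bigcap_k p^kG$ and consider the natural map $\Phi_G\colon G\to\prod_p G/p^{\omega}G$, whose surjectivity is an isomorphism invariant. Here $p^{\omega}C=\prod_{q\neq p}B_q^{(\aleph_0)}$ and $p^{\omega}A=\bigl(\prod_{q\neq p}B_q\bigr)^{(\aleph_0)}$, so $G/p^{\omega}G\cong B_p^{(\aleph_0)}$ in both cases; under these identifications $\Phi_C$ is the identity, while $\Phi_A$ is precisely your inclusion $A\hookrightarrow C$, which misses, for instance, the element of $C$ whose coordinate at the $n$-th prime $p$ is a fixed nonzero element of $B_p$ placed in the $n$-th summand of $B_p^{(\aleph_0)}$. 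Hence $\Phi_A$ is not surjective, $A\not\cong C$, and therefore $G\cong\mathbb{Q}^{(\lambda)}\oplus A\not\cong\mathbb{Q}^{(\lambda)}\oplus C$. So the theorem must be read with the exponent outside the product; your instinct that the two bracketings genuinely differ was right, but the difference survives the adjunction of $\mathbb{Q}^{(\lambda)}$, and the statement you set out to prove is not the one the paper establishes.
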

\begin{proof}
The amalgamation property together with the existence of a limit
model imply that $\K^{tf}$ is $\lambda$-Galois-stable. Then by Remark
\ref{st-2} $\lambda^{\aleph_0}=\lambda$, so by Corollary \ref{limo}
there is $H$ a $(\lambda, \omega_1)$-limit model.
Since $\K^{tf}$ is closed under direct sums, we have that $G \cong
H^{(\aleph_0)}$ by Theorem \ref{countablelim}.

In view of the fact that $H$ is a $(\lambda, \omega_1)$-limit model, 
by Fact
\ref{purec} $H \cong  \mathbb{Q}^{(\lambda)} \oplus \prod_{p}
\overline{\mathbb{Z}_{(p)}^{(\lambda)}}$. Therefore we have:

\[ G \cong \Big(\mathbb{Q}^{(\lambda)} \oplus \prod_{p}
\overline{\mathbb{Z}_{(p)}^{(\lambda)}} \Big)^{(\aleph_0)} \cong
\mathbb{Q}^{(\lambda)} \oplus \prod_{p}
\overline{\mathbb{Z}_{(p)}^{(\lambda)}}^{(\aleph_0)}. \] \end{proof}

In \cite[4.22]{maz} it was shown that limit models of countable
cofinality are not pure-injective. The argument given there uses some
deep facts about the theory of AECs. Here we give a new argument that
relies on some well-known properties of abelian groups.

\begin{cor}
If $G\in \K^{tf}$ is a $(\lambda, \omega)$-limit model,
then $G$ is not pure-injective.
\end{cor}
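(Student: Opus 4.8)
The plan is to leverage Theorem \ref{main1}, which gives an explicit isomorphism type for the $(\lambda,\omega)$-limit model $G$, namely
\[
G \cong \mathbb{Q}^{(\lambda)} \oplus \prod_{p} \overline{\mathbb{Z}_{(p)}^{(\lambda)}}^{(\aleph_0)}.
\]
The cleanest route is to exhibit a concrete countable system of linear equations that is finitely solvable in $G$ but has no solution, and then invoke the stated equivalence (algebraic compactness $\Leftrightarrow$ pure-injectivity) to conclude that $G$ is not pure-injective. I would focus attention on a single prime $p$ and on the summand $\overline{\mathbb{Z}_{(p)}}^{(\aleph_0)}$, since the trouble with countable limit models is that we have only countably many copies of the pure hull where an uncountable divisibility chain would be needed.

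The key steps, in order, are as follows. First I would recall that $\overline{\mathbb{Z}_{(p)}}$ is the group of $p$-adic integers $\mathbb{Z}_p$ (the pure hull of $\mathbb{Z}_{(p)}$), and that $p$-divisibility statements witness failures of algebraic compactness once one is working in an $\aleph_0$-fold direct sum rather than a direct product. Concretely, I would choose a sequence of elements $g_n \in G$, one living essentially in the $n$-th copy of $\mathbb{Z}_p$, and write down the equations $x_n - p\,x_{n+1} = g_n$ for $n<\omega$. The second step is to verify finite solvability: any finite subsystem involves only finitely many copies and can be solved inside that finite sub-sum because each $\mathbb{Z}_p$ is $p$-adically complete and the relevant partial sums converge $p$-adically. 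The third step is to show unsolvability in all of $G$: a genuine solution would require an element whose support meets infinitely many of the $\aleph_0$ direct-sum coordinates, contradicting the definition of the direct sum $(\cdot)^{(\aleph_0)}$, whose elements have finite support.

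The main obstacle I anticipate is the unsolvability argument, that is, making precise the claim that the chosen system forces infinite support. One must rule out that $p$-divisibility could be absorbed within a single coordinate or a finite set of coordinates; this requires using that the pure embedding $\mathbb{Z}_{(p)} \leq_{pp} \mathbb{Z}_p$ does not make $\mathbb{Z}_p$ itself $p$-divisible (since $\mathbb{Z}_p$ is reduced), so that an unbounded tower of $p$-divisibility genuinely spreads across coordinates. An alternative and perhaps slicker route, avoiding explicit equations, would be to argue purely structurally: a direct sum $A^{(\aleph_0)}$ of a nonzero reduced pure-injective $A$ is never pure-injective, because pure-injective (equivalently algebraically compact) groups are closed under direct products but an infinite direct sum fails to be a summand of the corresponding direct product $A^{\aleph_0}$. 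Under this approach the final step is simply to note that $\overline{\mathbb{Z}_{(p)}}^{(\aleph_0)}$ is a direct summand of $G$, and that a direct summand of a pure-injective module is pure-injective, so if $G$ were pure-injective the summand would be too, contradicting the previous sentence. I would likely present this structural version, as it isolates the obstruction most transparently, with the finite-support observation doing the real work.
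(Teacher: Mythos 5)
Your first route --- the explicit system $x_n - p\,x_{n+1} = g_n$ --- is correct and is, in substance, the paper's own proof. The paper also starts from Theorem \ref{main1}, reduces to the direct summand $\overline{\mathbb{Z}_{(p)}^{(\lambda)}}^{(\aleph_0)}$ of $G$, and cites precisely this finitely-solvable-but-globally-unsolvable divisibility system (the argument in \cite[\S 2]{prest} that $\overline{\mathbb{Z}_{(p)}}^{(\aleph_0)}$ is not pure-injective), then transfers non-pure-injectivity to $G$ using that direct summands of pure-injective groups are pure-injective (quoted via \cite[\S 6.1.9]{fuchs}). Your diagnosis of where the content lies is also right: finite solvability is trivial back-substitution, and unsolvability comes from the fact that in the reduced \emph{torsion-free} group $\overline{\mathbb{Z}_{(p)}^{(\lambda)}}$ one has $p^n g \notin p^{n+1}\overline{\mathbb{Z}_{(p)}^{(\lambda)}}$ whenever $g \notin p\,\overline{\mathbb{Z}_{(p)}^{(\lambda)}}$, so any solution must have a nonzero coordinate in every one of the $\aleph_0$ copies.

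However, the structural version that you say you would actually present contains a genuine gap. The lemma you propose --- for $A$ nonzero, reduced and pure-injective, $A^{(\aleph_0)}$ is never pure-injective because it fails to be a summand of $A^{\aleph_0}$, ``with the finite-support observation doing the real work'' --- is false at that level of generality: take $A = \mathbb{Z}/p\mathbb{Z}$, which is nonzero, reduced and pure-injective (bounded groups are algebraically compact). Then $(\mathbb{Z}/p\mathbb{Z})^{(\aleph_0)}$ is again bounded, hence pure-injective, and it \emph{is} a direct summand of $(\mathbb{Z}/p\mathbb{Z})^{\aleph_0}$, since every subspace of an $\mathbb{F}_p$-vector space is a summand. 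This shows that finite support by itself is no obstruction to splitting off from the product: a projection is free to send infinite-support elements wherever it likes. What rules out a splitting for $A = \overline{\mathbb{Z}_{(p)}^{(\lambda)}}$ is not reducedness alone but reducedness together with torsion-freeness (equivalently, unboundedness): applying a putative projection $\pi\colon A^{\aleph_0}\to A^{(\aleph_0)}$ to the tail elements $\xi_k=\sum_{n\geq k}p^{\,n-k}g_n$ and using $p^n g_n\notin p^{n+1}A$ forces $\pi(\xi_0)$ to have infinite support --- that is, you are back to the divisibility-tower computation of your first route. So either present the first route (as the paper in effect does), or repair the structural lemma by assuming $A$ nonzero, reduced and torsion-free, proving the non-splitting by the tower argument, or equivalently by the fact that reduced algebraically compact groups are complete in the $\mathbb{Z}$-adic topology while this direct sum is Hausdorff but not complete.
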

\begin{proof}
By Theorem \ref{main1} we have that $G  \cong  \mathbb{Q}^{(\lambda)}
\oplus \prod_{p} \overline{\mathbb{Z}_{(p)}^{(\lambda)}}^{(\aleph_0)}$.
For every $p$, one can show that
$\overline{\mathbb{Z}_{(p)}^{(\lambda)}}^{(\aleph_0)}$ is not
pure-injective by a similar argument to the proof that
$\overline{\mathbb{Z}_{(p)}}^{(\aleph_0)}$ is not pure-injective (an
argument for this is given in \cite[\S 2]{prest}). Then using that a
direct product is pure-injective if every component is pure-injective
(see \cite[\S 6.1.9]{fuchs}), it follows that $G$ is not
pure-injective. \end{proof}

Combining the results of this section with the ones of the previous
section we obtain:

\begin{cor} 
If $\forall \mu < \lambda( \mu^{\aleph_0} < \lambda)$, then for any
$G \in \K^{tf}_{\lambda}$ pure-injective there is a universal model
over it. 
\end{cor}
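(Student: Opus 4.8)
The plan is to combine the existence of a universal model in $\K^{tf}_\lambda$ with the direct-sum construction of universal extensions over pure-injective modules, so that essentially all the work has already been done in earlier results. First I would invoke Corollary \ref{u-tf}.(2): under the hypothesis $\forall \mu < \lambda(\mu^{\aleph_0} < \lambda)$, the class $\K^{tf}_\lambda$ has a universal model, which I will call $U$. This is the only step where the cardinal arithmetic hypothesis is used, and it produces a model $U \in \K^{tf}_\lambda$, of cardinality exactly $\lambda$, into which every torsion-free abelian group of cardinality $\le \lambda$ purely embeds.

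Next, I would note that $\K^{tf}$ is closed under direct sums (the class of torsion-free abelian groups is closed under direct sums, as already used in Example \ref{ex}.(1)), so that $\K^{tf} = \K^{T^{tf}}$ falls under the hypotheses of Lemma \ref{universal} with $|T^{tf}| = \aleph_0$. Applying Lemma \ref{universal} with $M := G$ (which is pure-injective and in $\K^{tf}_\lambda$ by assumption) and with the universal model $U$ produced above, I conclude that $G \oplus U$ is universal over $G$. Since $\| G \| = \| U \| = \lambda$ we have $\| G \oplus U \| = \lambda = \| G \|$ and $G \leq_{pp} G \oplus U$, so $G \oplus U$ is the required universal model over $G$.

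I do not expect a genuine obstacle: the substantive content is already isolated in Corollary \ref{u-tf}.(2) (existence of a universal model of cardinality $\lambda$) and in Lemma \ref{universal} (the direct-sum construction over a pure-injective base). The only points requiring a line of care are verifying that $U$ indeed has cardinality $\lambda$, so that $\| G \oplus U \| = \lambda$ and the notion of being \emph{universal over} $G$ applies without cardinality inflation, and recalling that $\K^{tf}$ is closed under direct sums so that Lemma \ref{universal} is applicable in the first place.
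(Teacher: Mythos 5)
Your proposal is correct and follows exactly the paper's own argument: invoke Corollary \ref{u-tf}.(2) to get a universal model $U \in \K^{tf}_\lambda$, then apply Lemma \ref{universal} (valid since $\K^{tf}$ is closed under direct sums) to conclude $G \oplus U$ is universal over $G$. The extra care you take in checking cardinalities and the direct-sum hypothesis is implicit in the paper's proof but not a different route.
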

\begin{proof} 
Let $G \in \K^{tf}_{\lambda}$ be pure-injective. Since $\lambda$
satisfies the hypothesis of Corollary \ref{u-tf}, there is $U \in
\K^{tf}_{\lambda}$ universal model in $\K^{tf}_\lambda$. Then by
Lemma \ref{universal} $G \oplus U$ is a universal model over $G$.  \end{proof}

By the above corollary, given $G \in \K^{tf}_{\beth_\omega}$
pure-injective,  for example $G= \mathbb{Q}^{(\beth_\omega)}$,
there is $H \in \K^{tf}_{\beth_\omega}$ such that $H$ is universal
over $G$. Since $\beth_\omega^{\aleph_0} > \beth_\omega$, by Remark
\ref{st-2} we have that $\K^{tf}$ is not
$\beth_\omega$-Galois-stable. This is the first example of an AEC
with joint embedding, amalgamation and no maximal models in which one
can construct universal extensions of cardinality $\lambda$ without
the hypothesis of $\lambda$-Galois-stability.


\begin{thebibliography}{She01b}





\bibitem[Bal09]{baldwinbook09}
John Baldwin, \emph{Categoricity}, American Mathematical Society
(2009).


\bibitem[BCG+]{grp}
John Baldwin, Wesley Calvert, John Goodrick, Andres Villaveces and
Agatha Walczak-Typke, \emph{Abelian groups as aec's}. Preprint. URL: 
www.aimath.org/WWN/categoricity/abeliangroups\_10\_1\_3.tex.

\bibitem[BET07]{baldwine}
 John Baldwin, Paul Eklof and Jan Trlifaj, \emph{$^{\perp} N$ as an abstract
elementary class}, Annals of Pure and Applied Logic
\textbf{149}(2007), no. 1,25--39.

\bibitem[BoVan]{bovan}
Will Boney and Monica VanDieren, \emph{Limit Models in Strictly Stable Abstract Elementary Classes}, Preprint. URL: https://arxiv.org/abs/1508.04717 .

\bibitem[Bum65]{bumby}
Richard T. Bumby, \emph{Modules which are isomorphic to submodules of
each other}, Archiv der Mathematik \textbf{16}(1965), 184--185.

\bibitem[D\v{z}a05]{dz}
Mirna D\v{z}amonja, \emph{Club guessing and the universal models},
Notre Dame Journal of Formal Logic \textbf{46} (2005),  283--300. 

\bibitem[Ekl71]{eklof}
Paul Eklof, \emph{ Homogeneous universal modules}, Mathematica
Scandinavica \textbf{29}(1971), 187--196. 

\bibitem[Fuc15]{fuchs}
Laszlo Fuchs, \emph{Abelian group theory}, Springer (2015).

\bibitem[Gro02]{grossberg2002}
Rami Grossberg, \emph{Classification theory for abstract elementary
classes},
  Logic and Algebra (Yi~Zhang, ed.), vol. 302, American Mathematical
Society,
  2002, 165--204.

\bibitem[Gro1X]{ramibook}
Rami Grossberg, \emph{ A Course in Model Theory}, in Preparation,
201X.


\bibitem[GrSh83]{grsh}
Rami Grossberg and Saharon Shelah, \emph{On universal locally finite
groups}, Israel Journal of Mathematics \textbf{41} (1983), 289--302.

\bibitem[GrVan06]{tamenessone}
Rami Grossberg and Monica VanDieren, \emph{Galois-stability for tame
abstract elementary classes}, Journal
  of Mathematical Logic \textbf{6} (2006), no.~1, 25--49.

\bibitem[GrVas17]{grva}
Rami Grossberg and Sebastien Vasey, \emph{Equivalent definitions of
superstability in tame abstract elementary classes}, The Journal of
Symbolic Logic \textbf{82} (2017), no. 4, 1387 -- 1408.

\bibitem[GKS18]{gks}
Pedro A. Guil Asensio, Berke Kalebogaz and Ashish K. Srivastava,
\emph{The Schr\"{o}der-Bernstein problem for modules}, 
Journal of Algebra \textbf{498} (2018),
153--164.

\bibitem[KojSh95]{kojsh}
Menachem Kojman and Saharon Shelah, \emph{Universal abelian groups},
Israel Journal of Mathematics \textbf{92} (1995),  113--24.

\bibitem[KolSh96]{kosh}
Oren Kolman and Saharon Shelah, \emph{Categoricity of Theories in
$L_{\omega, \kappa}$ when $\kappa$ is a measurable cardinal. Part 1},
Fundamenta Mathematicae \textbf{151} (1996), 209--240.

\bibitem[MaSh76]{mash}
Angus Macintyre and Saharon Shelah, \emph{Uncountable universal
locally finite groups}, Journal Algebra \textbf{43} (1976), 168--175. 

\bibitem[Maz20]{maz}
Marcos Mazari-Armida, \emph{Algebraic description of limit models in classes of abelian groups}, Annals of Pure and Applied Logic \textbf{171} (2020), no. 1, 102723. 

\bibitem[MaVa18]{mv}
Marcos Mazari-Armida and Sebastien Vasey, \emph{Universal classes near $\aleph_1$}, The Journal of Symbolic Logic \textbf{83} (2018), no. 4, 1633–1643

\bibitem[Pre88]{prest}
Mike Prest, \emph{Model Theory and Modules}, London Mathematical
Society Lecture Notes Series Vol. 130, Cambridge University Press,
Cambridge (1988).

\bibitem[Pre09]{prest09}
Mike Prest, \emph{Purity, Spectra and Localisation (Encyclopedia of Mathematics and its Applications)},  Cambridge University Press, Cambridge (2009).

\bibitem[Sab70]{sab}
Gabriel Sabbagh, \emph{Aspects logique de la puret\'{e} dans les
modules}, C.R. Acad. Sci. Paris \textbf{271}(1970), 909 --912.


\bibitem[Sh87a]{sh88}
Saharon Shelah, \emph{Classification of nonelementary classes, {II}.
{A}bstract
  elementary classes}, Classification theory (John Baldwin, ed.)
(1987), 419--497.

\bibitem[Sh87b]{sh300}
Saharon Shelah, \emph{Universal classes}, Classification theory (John
Baldwin, ed.) (1987), 264--418.

\bibitem[Sh96]{sh1}
Saharon Shelah, \emph{Universal in $(< \lambda)$-stable abelian
group}, Mathematica Japonica \textbf{44}
(1996), 1--9.

\bibitem[Sh97]{sh2}
Saharon Shelah, \emph{Non-existence of universals for classes like
reduced torsion free abelian
groups under embeddings which are not necessarily pure}, 229--86 in
Advances in Algebra and Model Theory, edited by M. Droste and R.
Goebel, vol. 9 of Algebra, Logic and Applications, Gordon and Breach,
Amsterdam, 1997.

\bibitem[Sh01]{sh3}
Saharon Shelah, \emph{Non-existence of universal members in classes
of abelian groups}, Journal of Group Theory \textbf{4} (2001),
169--91.

\bibitem[Sh17]{sh820}
Saharon Shelah, \emph{Universal Structures}, Notre Dame Journal of
Formal Logic
    \textbf{58} (2017), no- 2 , 159--177.



\bibitem[Sh:h]{shelahaecbook}
Saharon Shelah, \emph{Classification Theory for Abstract Elementary Classes},
 vol. 1 \& 2, Mathematical Logic and Foundations, no. 18 \& 20, College
  Publications (2009).


\bibitem[ShVi99]{shvil}
Saharon Shelah and Andres Villaveces, \emph{Toward categoricity for
classes with no maximal models}, Annals of Pure and Applied Logic
\textbf{97} (1-3):1-25 (1999)

\bibitem[Vas16a]{vaseya}
Sebastien Vasey, \emph{Infinitary stability theory}, Archive for
Mathematical Logic \textbf{55} (2016), nos. 3-4, 562--592. 


\bibitem[Vas18]{vaseyt}
Sebastien Vasey, \emph{Toward a stability theory of tame abstract elementary classes}, Journal of Mathematical Logic \textbf{18} (2018), no. 2, 1850009.

  \bibitem[Zie84]{ziegler}
Martin Ziegler, \emph{Model Theory of Modules}, Annals of Pure and
Applied Logic \textbf{26}(1984), 149 -- 213.



\end{thebibliography}

\end{document}